\documentclass[11pt, oneside]{article}   	
\usepackage{geometry}                		
\geometry{letterpaper}                   		
\usepackage{graphicx}				
\usepackage{amssymb}
\usepackage{amsmath}
\usepackage{graphicx}				
\usepackage{amssymb}
\usepackage{amsmath}
\usepackage{amsthm}
\usepackage{harpoon}
\usepackage{accents}

\usepackage{tikz}  
\usepackage{float}
\restylefloat{table}
\usepackage{mathrsfs}
\usepackage{verbatim}

\usetikzlibrary{positioning,chains,fit,shapes,calc}  

\newtheorem{theorem}{Theorem}
\newtheorem{lemma}{Lemma}
\newtheorem{definition}{Definition}

\newtheorem{corollary}{Corollary}

\title{On $q$-analogues Arising from Elliptic Integrals and the Arithmetic-Geometric Mean}
\author{Mario DeFranco}

\begin{document}
\maketitle
\abstract{We prove $q$-analogues of identities that are equivalent to the functional equation of the arithmetic-geometric mean. We also present $q$-analogues of $F(\sqrt{k},\frac{\pi}{2})$, the complete elliptical integral of the first kind, and its derivatives 
evaluated at $k=\frac{1}{2}$. These $q$-analogues interpolate those $n$th derivative evaluations 
by extending $n$ to a complex variable $s$, and we prove that they can be expressed as an infinite product.}

\section{Introduction} \label{intro}

We present $q$-analogues arising from two closely related objects: the arithmetic-geometric mean and the complete elliptic integral of the first kind. We review these objects now. 

We recall the definition of the arithmetic-geometric mean $M(a,b)$ of two real numbers $a$ and $b$: 
Let $a_0=a$ and $b_0=b$ and define 
\[
a_{n+1} = \frac{a_n+b_n}{2}\,\,\,\,\,\mathrm{and}\,\,\,\,\,\, b_{n+1} = \sqrt{a_n b_n}.
\] 
Then 
\[
\lim_{n\rightarrow \infty} a_n = \lim_{n\rightarrow \infty} b_n = M(a,b).
\]
For information about the arithmetic-geometric mean see D. A. Cox \cite{Cox}. The properties
\[
M(ca,cb) = cM(a,b)
\]
and 
\[
M(a,b) = M(\frac{a+b}{2}, \sqrt{ab}).
\]
allow us to think of $M(a,b)$ as a function of one variable $k$ that satisfies the functional equation
\[
M(1,k) = \frac{1+k}{2}M(1,\frac{2\sqrt{k}}{1+k}).
\]
C. F. Gauss \cite{Gauss} proved that 
\[
\frac{1}{M(1,k)} = \frac{2}{\pi} \int_0^1 \frac{dt}{\sqrt{1-t^2}\sqrt{1-(1-k^2)t^2}}.
\]
The integral
\[
F(k,\frac{\pi}{2})=\int_0^1 \frac{dt}{\sqrt{1-t^2}\sqrt{1-k^2t}}
\]
is known as the complete elliptic integral of the first kind and we let $F(x)$ denote 
\[
F(x)=\frac{2}{\pi}F(\sqrt{x},\frac{\pi}{2}) = \frac{2}{\pi}\int_0^1 \frac{dt}{\sqrt{1-t^2}\sqrt{1-xt^2}} = \sum_{n=0}^\infty a_n x^n
\]
where 
\[
a_n = (\prod_{j=1}^n \frac{2j-1}{2j})^2.
\]
Therefore the functional equation in terms of $F(x)$ is 
\begin{equation} \label{functional equation F}
F(1-k^2) = \frac{2}{1+k}F((\frac{1-k}{1+k})^2).
\end{equation}
In Section \ref{identities} show that this functional equation is equivalent to a set of identities involving the $a_n$, and in Section \ref{q identities} we prove $q$-analogues of those identities. 

References that discuss the above relationship are \cite{Almqvist}, \cite{Borwein book}, \cite{Cox}, \cite{Gilmore}, \cite{Tkachev}. The proofs we have found in the literature are the three of C. F. Gauss using integral substitutions, differential equations, and another also based on the power series coefficients $a_n$. These are discussed in \cite{Cox}. There is also another proof using integrals by B. C. Carlson \cite{Carlson}. 

We now discuss how $q$-analogues enter the above discussion. We call our results ``$q$-analogues" because they involve standard expressions from $q$-theory: the $q$-positive integers
\[
[n]_q = \sum_{j=0}^{n-1} q^j = \frac{1-q^n}{1-q},
\]
the $q$-factorial
\[
n!_q = \prod_{j=1}^n [j]_q
\]
and the $q$-binomial coefficients
\[
{n \choose m}_q = \frac{n!_q}{(n-m)!_q m!_q}.
\]
We think of $q$ as an indeterminate in a formal power series or as a real number between 0 and 1. When $q=1$, the above expressions evaluate to the usual integers, factorials, and binomial coefficients. We also use the following generalizations 
\[
(\alpha)!_q = \frac{1}{(1-q)^\alpha}\prod_{n=1}^\infty \frac{1-q^n}{1-q^{\alpha+n}}
\]
and 
\[
{\alpha \choose \beta}_q = \frac{(\alpha)!_q}{(\alpha-\beta)!_q(\beta)!_q}
\]
that recover the previous formulas when $\alpha$ and $\beta$ are integers. For complex numbers $\alpha$, the $(\alpha)!_q$ is referred to as the $q$-Gamma function $\Gamma_q(\alpha)$ and satisfies 
\[
\lim_{q\rightarrow 1^-} \Gamma_q(\alpha) = \Gamma(\alpha)
\]
where $\Gamma(\alpha)$ is the Gamma function (see \cite{Andrews} for a proof). We will use this fact in Section \ref{q first elliptic}. A $q$-analogue of trigonometric functions also appears in Section \ref{q first elliptic}. 

In Section 4 we present formulas that are $q$-analogues of $\frac{d^n}{dk^n}F(\sqrt{k},\frac{\pi}{2})$ evaluated at $k=\frac{1}{2}$. We prove that these formulas are equal to an infinite  product which may be expressed using $\Gamma_q$. These equations also naturally allow the variable $n$ to take on complex values.

The $q$-formulas presented in this paper, then, may be viewed as seeking to define a $q$-analogue of the arithmetic-geometric mean, or rather a function or functions that satisfy a similar functional equation. 

Another motivation is that the Jacobi theta functions are also closely connected to the arithmetic-geometric mean and elliptic integrals (Section \ref{further work}). Furthermore, the theta functions are related to the Riemann zeta function and other Dirichlet series via the Mellin transform. Information about the arithmetic-geometric mean and elliptic integrals could thus be useful for understanding those Dirichlet series.

\section{Identities for the Functional Equation of the Arithemtic-Geometric Mean} \label{identities}

Let 
\[
F(x) = \sum_{n=0}^\infty a_n x^n.
\]
Suppose $F(x)$ satisfies the functional equation  \eqref{functional equation F}
\begin{equation*}
F(1-k^2) = \frac{2}{1+k}F((\frac{1-k}{1+k})^2).
\end{equation*}
We show this determines the $a_n$ and also evaluate the $a_n$ by setting $q=1$ in Theorem \ref{q agm functional equation identity}. 

Let 
\[
k=1-u
\]
and the functional equation \eqref{functional equation F} becomes  
\begin{equation} \label{functional equation power series}
\frac{2}{2-u}F((\frac{u}{2-u})^2) = F(u(2-u)) 
\end{equation}
and as a power series becomes
\[
\frac{2}{2-u} \sum_{n=0}^\infty a_n (\frac{u}{2-u})^{2n} =\sum_{n=0}^\infty a_n (u(2-u))^{n}.
\]
To the left side we now apply the binomial theorem 
\[
(1+x)^a = \sum_{n=0}^\infty {a \choose n} x^n
\]
and the fact 
\[
{-2n-1 \choose m} = {2n+m \choose m}(-1)^m
\]
to obtain 
\begin{align*}
\frac{2}{2-u} \sum_{n=0}^\infty a_n (\frac{u}{2-u})^{2n} &=\sum_{n=0}^\infty \frac{a_n}{2^{2n}} (\frac{u}{1-\frac{u}{2}})^{2n} \\ 
&=\sum_{n,m=0}^\infty \frac{a_n}{2^{2n+m}} u^{2n+m} {2n+m\choose 2n} \\ 
& = \sum_{k=0}^\infty \frac{u^k}{2^k} \sum_{n=0}^\infty a_n {k\choose 2n}
\end{align*}
where we have set $k = 2n+m$. 

Now the right side of \eqref{functional equation power series} becomes
\begin{align*}
\sum_{n=0}^\infty a_n (u(2-u))^{n} & = \sum_{n=0}^\infty a_n 2^nu^n(1-\frac{u}{2})^n\\ 
&= \sum_{n=0}^\infty \sum_{m=0}^n a_n 2^nu^n(-1)^{m}(\frac{u}{2})^{m} {n \choose m}\\ 
&= \sum_{k=0}^\infty \frac{u^k}{2^k}\sum_{n=0}^\infty a_n 2^{2n}(-1)^{k-n} {n \choose k-n} 
\end{align*}
where we have set $k=n+m$. 
Therefore for each integer $k \geq0$
\[
\sum_{n=0}^\infty a_n {k\choose 2n}=\sum_{n'=0}^\infty a_{n'} 2^{2n'}(-1)^{k-n'} {n' \choose k-n'}. 
\]
Now we apply the following result which we call Identity 1: 
\newline For integer $n'$ and integer $k$
\[
(-1)^{k-n'} {n' \choose k-n'} = \sum_{j=0}^\infty (-1)^j {2n'+j \choose 2n'} {k \choose n'+j}.
\]
We prove this result in Theorem \ref{q identity 1} using $q$-binomial coefficients. We get
\[
\sum_{n=0}^\infty a_n {k\choose 2n}=\sum_{n', j\geq 0} a_{n'} 2^{2n'}(-1)^j {2n'+j \choose 2n'} {k \choose n'+j}.
\]
Setting $n'+j = 2n$ gives 
\begin{equation} \label{identity 2 even}
a_n = \sum_{n'=0}^{2n} (-1)^{n'} a_{n'} 2^{2n'} {2n+n' \choose 2n'}.
\end{equation}
Setting $n'+j = 2n+1$ gives 
\begin{equation} \label{identity 2 odd}
0 = \sum_{n'=0}^{2n+1} (-1)^{n'} a_{n'} 2^{2n'} {2n+1+n' \choose 2n'}.
\end{equation}
We refer to \eqref{identity 2 even} and \eqref{identity 2 odd} as Identity 2. We present $q$-analogues of Identities 1 and 2 and prove them in Section \ref{q identities}.

\section{Proofs of $q$-analogues of Identities 1 and 2} \label{q identities}
\subsection{$q$-analogue of Identity 1}
\begin{theorem} \label{q identity 1}Let $k$ and $n$ be integers $\geq 0$. Then
\[
(-1)^{n-k}q^{\frac{(k-2n)(k-2n-1)}{2}} { n \choose k-n}_q = \sum_{j=-\infty}^{\infty} (-1)^{j}q^{\frac{(j-n)(j-n-1)}{2}} {2n+j \choose 2n }_q {k \choose n+j}_q.
\]
The sum has only finitely many non-zero terms if $k$ and $n$ are integers.  
\end{theorem}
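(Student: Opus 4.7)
The plan is to recast the right-hand side as a terminating basic hypergeometric ${}_2\phi_1$ series and apply the terminating $q$-Chu--Vandermonde summation.

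First I would dispose of the boundary case $k < n$: there the left side vanishes because $\binom{n}{k-n}_q = 0$, and on the right every surviving summand has either $j < 0$ (which kills $\binom{2n+j}{2n}_q$) or $n + j > k$ (which kills $\binom{k}{n+j}_q$). So we may assume $k \geq n$ and set $N := k - n$.

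Next I would factor out the $j = 0$ summand, which equals $q^{\binom{n+1}{2}}\binom{k}{n}_q$, from the right-hand side. Writing $\binom{2n+j}{2n}_q = (q^{2n+1};q)_j / (q;q)_j$, where $(a;q)_j = \prod_{i=0}^{j-1}(1 - aq^i)$, and handling the ratio $\binom{k}{n+j}_q/\binom{k}{n}_q$ by means of the reflection identity $(q^{k-n-j+1};q)_j = (-1)^j q^{(k-n)j - \binom{j}{2}}(q^{n-k};q)_j$, the sum collapses to
\[
q^{\binom{n+1}{2}}\binom{k}{n}_q \cdot {}_2\phi_1\!\left(q^{-N},\, q^{2n+1};\, q^{n+1};\, q,\, q^{k-2n}\right).
\]
Since the argument $q^{k-2n}$ coincides with $cq^{N}/b$ for $b = q^{2n+1}$ and $c = q^{n+1}$, the terminating $q$-Chu--Vandermonde formula ${}_2\phi_1(q^{-N},b;c;q,cq^{N}/b) = (c/b;q)_N/(c;q)_N$ evaluates the inner sum as $(q^{-n};q)_N/(q^{n+1};q)_N$. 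If $k > 2n$, equivalently $N > n$, this vanishes because $(q^{-n};q)_N$ contains the factor $1 - q^0 = 0$, consistent with $\binom{n}{k-n}_q = 0$ on the left. If $0 \leq N \leq n$, applying $(q^{-n};q)_N = (-1)^N q^{\binom{N}{2} - nN}(q;q)_n/(q;q)_{n-N}$ produces a $\binom{n}{k-n}_q$ together with a signed power of $q$.

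The main obstacle is not conceptual but algebraic bookkeeping: one must verify that the accumulated $q$-exponent $\binom{n+1}{2} + \binom{k-n}{2} - n(k-n)$ equals $\binom{k-2n}{2}$. Substituting $u = k - n$, this reduces to the quadratic identity $n^2 + n + u^2 - (2n+1)u = (u-n)(u-n-1)$, immediate on expansion. Once the ${}_2\phi_1$ form has been recognized, the theorem is thus a direct consequence of the classical $q$-Chu--Vandermonde summation.
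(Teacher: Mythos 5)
Your proof is correct, but it takes a genuinely different route from the paper. The paper never evaluates the sum as a hypergeometric series: it reindexes $j \mapsto i - j$ with $k = n + i$, generalizes the integer parameters to $s$ and $b$ (Theorems \ref{b integer thm} and \ref{s+b integer thm}), and proves the resulting polynomial identities by induction on the integer parameter, using the $q$-Pascal recurrence ${b+1 \choose j}_q = q^j{b \choose j}_q + {b \choose j-1}_q$ together with an auxiliary shift parameter $\alpha$ that makes the induction close (Lemmas \ref{b integer lemma} and \ref{s+b integer lemma}). You instead recognize the right-hand side, after extracting the $j=0$ term and applying the reflection $(q^{k-n-j+1};q)_j = (-1)^j q^{(k-n)j - \binom{j}{2}}(q^{n-k};q)_j$, as a terminating ${}_2\phi_1(q^{-N}, q^{2n+1}; q^{n+1}; q, q^{k-2n})$ whose argument is exactly the $q$-Chu--Vandermonde specialization $cq^N/b$; I have checked the reduction, the vanishing when $N > n$, and the exponent identity $\binom{n+1}{2} + \binom{u}{2} - nu = \binom{u-n}{2}$, and all are right (your parenthetical claim that $j<0$ always kills $\binom{2n+j}{2n}_q$ is only accurate for $-n \le j \le -1$; for $j < -n$ it is $\binom{k}{n+j}_q$ that vanishes, but the conclusion stands). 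What each approach buys: yours is shorter and situates the identity within classical $q$-series machinery, and since $q$-Chu--Vandermonde holds for arbitrary $b$ and $c$ with only $N$ required to be a nonnegative integer, it would also recover the paper's Theorem \ref{b integer thm} essentially for free; the paper's self-contained induction avoids citing the summation formula and is structured so that the same lemma template handles both generalizations (integer $b$ and integer $s+b$), the second of which corresponds to terminating the series from the other end and would require a separate argument in your framework.
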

We prove two generalizations of this result. As noted, the terms in the sum are zero if $ j>k-n$. We thus let $k = n+i$ and re-index $j \mapsto i-j$ to get 
\[
q^{\frac{(i-n)(i-n-1)}{2}} { n \choose i}_q = \sum_{j=0}^\infty (-1)^{j}q^{\frac{(i-j-n)(i-j-n-1)}{2}} {2n+i-j \choose 2n }_q {n+i \choose j}_q.
\] 
We now allow $n$ and $i$ to be possibly non-integers $s$ and $b$ satisfying certain conditions in Theorems \ref{b integer thm} and \ref{s+b integer thm}.
 \begin{theorem} \label{b integer thm}Suppose $b$ is an integer. Then
\[
q^{\frac{(b-s)(b-s-1)}{2}} { s \choose b}_q = \sum_{j=0}^b (-1)^{j}q^{\frac{(b-j-s)(b-j-s-1)}{2}} {2s+b-j \choose b-j }_q {s+b \choose j}_q.
\] 
\end{theorem}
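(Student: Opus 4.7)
The plan is to reduce the identity to a single application of the $q$-Chu--Vandermonde summation. Throughout I use the $q$-Pochhammer symbol $(a;q)_n = \prod_{i=0}^{n-1}(1-aq^i)$ and the formula $\binom{\alpha}{m}_q = (q^{\alpha-m+1};q)_m/(q;q)_m$, which is consistent with the paper's definition of $\binom{\alpha}{\beta}_q$ whenever $m$ is a non-negative integer.

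The first step is to rewrite the three $q$-binomials occurring in the statement as
\[
\binom{s}{b}_q = \frac{(q^{s-b+1};q)_b}{(q;q)_b},\qquad \binom{2s+b-j}{b-j}_q = \frac{(q^{2s+1};q)_{b-j}}{(q;q)_{b-j}},\qquad \binom{s+b}{j}_q = \frac{(q^{s+b-j+1};q)_j}{(q;q)_j},
\]
and to apply the splitting $(q^{s+b-j+1};q)_j = (q^{s+1};q)_b/(q^{s+1};q)_{b-j}$ so that $(q^{s+1};q)_b$ factors out of the sum on the right-hand side.

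Next, I would change the index of summation to $k=b-j$ and use the standard inversion
\[
(q^{s-b+1};q)_b = (-1)^b q^{bs-b(b-1)/2}(q^{-s};q)_b
\]
to recast the left-hand side. Expanding the two quadratic exponents as
\[
\tfrac{(b-s)(b-s-1)}{2} = \tfrac{b(b-1)}{2} - bs + \tfrac{s(s+1)}{2},\qquad \tfrac{(k-s)(k-s-1)}{2} = \tfrac{k(k-1)}{2} - ks + \tfrac{s(s+1)}{2},
\]
exhibits $q^{s(s+1)/2}$ as a common factor on both sides; after it is cancelled the identity becomes
\[
\frac{(q^{-s};q)_b}{(q^{s+1};q)_b} \;=\; \sum_{k=0}^{b}\binom{b}{k}_q\,\frac{(q^{2s+1};q)_k}{(q^{s+1};q)_k}\,(-q^{-s})^k q^{k(k-1)/2}.
\]

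Finally, rewriting $\binom{b}{k}_q = (-1)^k q^{bk-k(k-1)/2}(q^{-b};q)_k/(q;q)_k$ collapses all signs and $q^{k(k-1)/2}$ factors, leaving the terminating basic hypergeometric series
\[
\sum_{k=0}^{b}\frac{(q^{-b};q)_k\,(q^{2s+1};q)_k}{(q^{s+1};q)_k\,(q;q)_k}\,q^{k(b-s)} \;=\; {}_2\phi_1\!\left(q^{-b},\,q^{2s+1};\,q^{s+1};\,q,\,q^{b-s}\right).
\]
The classical $q$-Chu--Vandermonde summation
\[
{}_2\phi_1\!\left(q^{-n},\,a;\,c;\,q,\,cq^n/a\right) = \frac{(c/a;q)_n}{(c;q)_n},
\]
applied with $n=b$, $a=q^{2s+1}$, and $c=q^{s+1}$, gives $c/a=q^{-s}$ and $cq^n/a=q^{b-s}$, so the sum equals $(q^{-s};q)_b/(q^{s+1};q)_b$, matching the left-hand side.

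The main technical obstacle is keeping track of the three separate $q^{k(k-1)/2}$ factors and signs that enter the calculation: one from the prefactors in the statement, one from the inversion $(q^{s-b+1};q)_b\to (q^{-s};q)_b$, and one from rewriting the ordinary $q$-binomial $\binom{b}{k}_q$ in hypergeometric form. Once one verifies that these factors telescope, the identity reduces cleanly to $q$-Chu--Vandermonde.
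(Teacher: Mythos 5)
Your proof is correct, but it takes a genuinely different route from the paper. You recognize the sum as a terminating basic hypergeometric series: after pulling out $(q^{s+1};q)_b$, reversing the summation ($k=b-j$), and applying the standard inversions $(q^{s-b+1};q)_b=(-1)^bq^{bs-b(b-1)/2}(q^{-s};q)_b$ and $\binom{b}{k}_q=(-1)^kq^{bk-k(k-1)/2}(q^{-b};q)_k/(q;q)_k$, the identity collapses to ${}_2\phi_1(q^{-b},q^{2s+1};q^{s+1};q,q^{b-s})=(q^{-s};q)_b/(q^{s+1};q)_b$, which is exactly the second form of $q$-Chu--Vandermonde with $n=b$, $a=q^{2s+1}$, $c=q^{s+1}$ (so $c/a=q^{-s}$, $cq^n/a=q^{b-s}$). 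I checked the exponent bookkeeping --- the three $q^{k(k-1)/2}$ factors and the signs do telescope as you claim, and the prefactor $q^{s(s+1)/2}$ cancels from both sides. The paper instead rewrites the statement as a polynomial identity in $q^s$ and proves it by induction on $b$ via the $q$-Pascal recurrence $\binom{b+1}{j}_q=q^j\binom{b}{j}_q+\binom{b}{j-1}_q$, which forces the introduction of an auxiliary shift parameter $\alpha$ (Lemma \ref{b integer lemma}) so that the induction closes. Your argument is shorter and situates the identity inside the classical theory of basic hypergeometric series, at the cost of invoking $q$-Chu--Vandermonde as a black box; the paper's argument is self-contained and elementary but requires guessing the right $\alpha$-generalization. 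One small point worth a sentence in a final write-up: the reduction introduces $(q^{s+1};q)_k$ in denominators, so you should note that after clearing $(q^{s+1};q)_b$ both sides are polynomials in $q^{\pm s}$ and the identity for generic $s$ extends to all $s$.
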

\begin{proof}
The statement is equivalent to 
\[
\sum_{j=0}^b (-1)^j q^{j(s-b)+\frac{j(j+1)}{2}} {b \choose j}_q \prod_{k=1}^{b-j} (1-q^{2s+k}) \prod_{k=b-j+1}^b (1-q^{s+k}) = \prod_{k=1}^b (1-q^{s-k+1}). 
\]
This statement is proved in Lemma \ref{b integer lemma} for $\alpha=0$.
\end{proof}
\begin{lemma} \label{b integer lemma}
 Let $b$ be an integer. For all $\alpha$ and $s$:
\begin{align*}
&\sum_{j=0}^b (-1)^j q^{j(s-b)+\frac{j(j+1)}{2}} {b \choose j}_q \prod_{k=1}^{b-j} (1-q^{2s+\alpha+k}) \prod_{k=b-j+1}^b (1-q^{s+\alpha+k}) = \prod_{k=1}^b (1-q^{s-k+1}) \\ 
\end{align*}
\end{lemma}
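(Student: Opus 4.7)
The plan is to induct on $b \geq 0$. The base case $b = 0$ is immediate since both sides reduce to the empty product $1$.

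For the inductive step, denote the left-hand side by $L_b$ and write $A_{j,b} := \prod_{k=1}^{b-j}(1-q^{2s+\alpha+k})\prod_{k=b-j+1}^b(1-q^{s+\alpha+k})$ for the product in the $j$-th summand. I would apply the $q$-Pascal relation
\[
{b \choose j}_q = {b-1 \choose j-1}_q + q^j{b-1 \choose j}_q
\]
to split $L_b = S_1 + S_2$, then reduce each piece to the level-$(b-1)$ quantity $L_{b-1}$ via two distinct product identities for $A_{j,b}$.

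For $S_1$, reindexing $j \mapsto j+1$ and using the relation $A_{j+1,b} = A_{j,b-1}(1 - q^{s+\alpha+b})$, obtained by directly comparing the Pochhammer factors between levels $b$ and $b-1$, gives (after consolidating the $q$-exponent) $S_1 = -q^{s-b+1}(1-q^{s+\alpha+b}) L_{b-1}$. For $S_2$, the summation range $0 \leq j \leq b-1$ ensures $b-j \geq 1$, so I can extract the leading factor $(1-q^{2s+\alpha+1})$ from $(q^{2s+\alpha+1};q)_{b-j}$; the residual product is exactly $A_{j,b-1}$ evaluated with $\alpha$ replaced by $\alpha+1$, so $S_2 = (1-q^{2s+\alpha+1}) L_{b-1}(\alpha+1, s)$.

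Here the inductive hypothesis is invoked crucially: it states $L_{b-1}(\alpha, s) = \prod_{k=1}^{b-1}(1-q^{s-k+1})$, which is $\alpha$-independent, so $L_{b-1}(\alpha+1, s) = L_{b-1}(\alpha, s)$. Combining the two pieces, the $q^{2s+\alpha+1}$ terms cancel in the bracket
\[
(1-q^{2s+\alpha+1}) - q^{s-b+1}(1-q^{s+\alpha+b}) = 1 - q^{s-b+1},
\]
yielding $L_b = (1-q^{s-b+1}) L_{b-1} = \prod_{k=1}^b(1-q^{s-k+1})$, closing the induction. The main obstacle is identifying the two product relations for $A_{j,b}$ that make each piece of the $q$-Pascal split reducible to $L_{b-1}$; once both are in hand, the remaining cancellation is a short algebraic manipulation.
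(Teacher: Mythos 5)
Your proposal is correct and follows essentially the same route as the paper: an induction on $b$ via the $q$-Pascal relation, splitting the sum into one piece equal to $(1-q^{2s+\alpha+1})$ times the level-$(b-1)$ sum with $\alpha$ shifted to $\alpha+1$ and another equal to $-q^{s-b+1}(1-q^{s+\alpha+b})$ times the level-$(b-1)$ sum, then exploiting the $\alpha$-independence of the right-hand side to combine them. The final cancellation $(1-q^{2s+\alpha+1})-q^{s-b+1}(1-q^{s+\alpha+b})=1-q^{s-b+1}$ is exactly the paper's closing step.
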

\begin{proof}
Let 
\[
f(b,\alpha) = \sum_{j=0}^b (-1)^j q^{j(s-b)+\frac{j(j+1)}{2}} {b \choose j}_q \prod_{k=1}^{b-j} (1-q^{2s+\alpha+k}) \prod_{k=b-j+1}^b (1-q^{s+\alpha+k}).
\]
The lemma statement is  then 
\[
f(b,\alpha)=\prod_{k=1}^b (1-q^{s-k+1}).
\]
We use induction on $b$. The lemma is true for $b=0$. Assume it is true for some $b\geq 0$
Consider $f(b+1,\alpha)$. Use 
\[
{ b+1 \choose j}_q = q^j {b \choose j}_q+ {b \choose j-1}_q
\]
to express $f(b+1,\alpha)$ as 
\begin{align} \label{first b sum}
&\sum_{j=0}^{b} (-1)^j q^{j(s-b-1)+\frac{j(j+1)}{2}} q^j{b \choose j}_q \prod_{k=1}^{b-j+1} (1-q^{2s+\alpha+k}) \prod_{k=b-j+2}^{b+1} (1-q^{s+\alpha+k}) \\  \label{second b sum}
+&\sum_{j=1}^{b+1} (-1)^j q^{j(s-b-1)+\frac{j(j+1)}{2}} {b \choose j-1}_q \prod_{k=1}^{b-j+1} (1-q^{2s+\alpha+k}) \prod_{k=b-j+2}^{b+1} (1-q^{s+\alpha+k}). 
\end{align}
The first sum \eqref{first b sum} is equal to 
\[
(1-q^{2s+\alpha+1})f(b,\alpha+1) 
\]
and the second sum \eqref{second b sum} is equal to 
\[
 - q^{s-b}f(b,\alpha)(1-q^{s+\alpha+b+1}).
\]
Using the induction hypothesis we get that 
\[
(1-q^{2s+\alpha+1})f(b,\alpha+1) - q^{s-b}f(b,\alpha)(1-q^{s+\alpha+b+1})
\]
 is equal to 
\[
\prod_{k=1}^{b+1} (1-q^{s-k+1}).
\]
\end{proof}

\begin{theorem} \label{s+b integer thm}
Suppose $s+b=M$ is an integer $\geq 0$. Then
\[
q^{\frac{(b-s)(b-s-1)}{2}} { s \choose b}_q = \sum_{j=0}^M (-1)^{j}q^{\frac{(b-j-s)(b-j-s-1)}{2}} {2s+b-j \choose b-j }_q {s+b \choose j}_q.
\] 
\end{theorem}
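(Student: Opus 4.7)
My plan is to reduce Theorem~\ref{s+b integer thm} to a polynomial identity in $q$ and then recognize that polynomial identity as a direct specialization of Lemma~\ref{b integer lemma}.

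I first use $b = M - s$ to eliminate $b$. Setting $u = M - 2s$, the algebraic identity
\[
\tfrac{(u - j)(u - j - 1)}{2} = \tfrac{u(u-1)}{2} - uj + \tfrac{j(j+1)}{2}
\]
shows that $q^{(M-2s-j)(M-2s-j-1)/2} = q^{(M-2s)(M-2s-1)/2}\,q^{j(2s-M)+j(j+1)/2}$, so the common factor $q^{(M-2s)(M-2s-1)/2}$ cancels on both sides and the theorem reduces to
\[
{s \choose M-s}_q = \sum_{j=0}^M (-1)^j q^{j(2s-M) + j(j+1)/2} {s + M - j \choose M-s-j}_q {M \choose j}_q.
\]

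Next I multiply both sides by $(2s)!_q\,(M-s)!_q / (s)!_q$ to clear the $q$-Gamma denominators arising in the $q$-binomial coefficients. The three ratios
\[
\frac{(s+M-j)!_q}{(s)!_q} = \prod_{k=1}^{M-j} [s+k]_q, \qquad \frac{(M-s)!_q}{(M-s-j)!_q} = \prod_{k=1}^{j} [M-s-j+k]_q, \qquad \frac{(2s)!_q}{(2s-M)!_q} = \prod_{k=0}^{M-1} [2s-k]_q
\]
are each \emph{finite} products, since in each case the two arguments of $(\alpha)!_q$ differ by the non-negative integer $M-j$, $j$, and $M$ respectively; this follows from iterating $(\alpha)!_q = [\alpha]_q\,(\alpha-1)!_q$. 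After this multiplication and clearing the common factor $(1-q)^M$ from both sides, the theorem takes the polynomial form
\[
\prod_{k=0}^{M-1} (1-q^{2s-k}) = \sum_{j=0}^M (-1)^j q^{j(2s-M)+j(j+1)/2} {M \choose j}_q \prod_{k=1}^{M-j} (1-q^{s+k}) \prod_{k=1}^{j} (1-q^{M-s-j+k}).
\]

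Finally I apply Lemma~\ref{b integer lemma} with the substitutions $\tilde{b} = M$, $\tilde{s} = 2s$, and $\tilde{\alpha} = -3s$. Then $\tilde{s}-\tilde{b} = 2s-M$, $2\tilde{s}+\tilde{\alpha} = s$, $\tilde{s}+\tilde{\alpha} = -s$, and $\tilde{s}-k+1 = 2s-k+1$, so the lemma's first product becomes $\prod_{k=1}^{M-j}(1-q^{s+k})$ and its second becomes $\prod_{k=M-j+1}^{M}(1-q^{k-s})$, which agrees with $\prod_{k=1}^{j}(1-q^{M-s-j+k})$ after reindexing $k \mapsto k-(M-j)$. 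The right side $\prod_{k=1}^{M}(1-q^{2s-k+1})$ matches $\prod_{k=0}^{M-1}(1-q^{2s-k})$ after $k \mapsto k+1$. Thus the displayed polynomial identity, and hence the theorem, follows immediately. The main obstacle is spotting the correct specialization $(\tilde{b},\tilde{s},\tilde{\alpha})=(M,2s,-3s)$; once this is identified, the remaining manipulations are routine bookkeeping.
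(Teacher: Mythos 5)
Your proof is correct, but it takes a different route from the paper's. The paper proves Theorem~\ref{s+b integer thm} by reducing it to the polynomial identity
\[
\sum_{j=0}^M q^{j(s+1)} { M \choose j}_q \prod_{n=1}^{M-j}(1-q^{s+n})  \prod_{n=M-j+1}^M(1-q^{s-n}) = \prod_{n=1}^M (1-q^{2s-n+1}),
\]
which it then establishes as the $\alpha=0$ case of a \emph{second} two-parameter induction, Lemma~\ref{s+b integer lemma} (a separate induction on $M$, parallel in structure to but distinct from Lemma~\ref{b integer lemma}). Your reduction lands on the same polynomial identity up to the rewriting $(1-q^{s-n}) = -q^{s-n}(1-q^{n-s})$, which is where your $(-1)^j$ and the exponent $j(2s-M)+j(j+1)/2$ come from; I checked that the bookkeeping (the cancellation of the $q$-power prefactor, the multiplication by $(2s)!_q(M-s)!_q/(s)!_q$, and the reindexings) is all correct. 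The genuinely new step is your observation that this identity is the specialization $(\tilde b,\tilde s,\tilde\alpha)=(M,2s,-3s)$ of Lemma~\ref{b integer lemma}: since that lemma is asserted for all $\alpha$ and $s$, setting $\alpha=-3s$ is legitimate, and the three products match exactly under your reindexings. What this buys you is economy: Lemma~\ref{s+b integer lemma} and its induction become redundant, and both Theorem~\ref{b integer thm} and Theorem~\ref{s+b integer thm} are seen to flow from the single two-parameter Lemma~\ref{b integer lemma}, evaluated at $\alpha=0$ and at $\alpha=-3s$ (with $s\mapsto 2s$, $b\mapsto M$) respectively. The paper's version, by keeping two parallel lemmas, is more symmetric in presentation but does strictly more work.
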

\begin{proof}
The statement is equivalent to 
 \[
\sum_{j=0}^M q^{j(s+1)} { M \choose j}_q \prod_{n=1}^{M-j}(1-q^{s+n})  \prod_{n=M-j+1}^M(1-q^{s-n}) = \prod_{n=1}^M (1-q^{2s-n+1}).
\]
This statement is proved in Lemma \ref{s+b integer lemma} for $\alpha=0$.
\end{proof}

\begin{lemma}\label{s+b integer lemma}
 Let $M$ be an integer $\geq 0$. For all $\alpha$ and $s$:
 \[
\sum_{j=0}^M q^{j(s+1+\alpha)} { M \choose j}_q \prod_{n=1}^{M-j}(1-q^{s+n+\alpha})  \prod_{n=M-j+1}^M(1-q^{s-n-\alpha}) = \prod_{n=1}^M (1-q^{2s-n+1}).
\]
\end{lemma}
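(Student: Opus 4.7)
The plan is to mirror the inductive strategy of Lemma~\ref{b integer lemma}: keep $\alpha$ as a free parameter and induct on $M$, using the $q$-Pascal identity
\[
{M+1 \choose j}_q = q^j {M \choose j}_q + {M \choose j-1}_q
\]
to split the left-hand side for $M+1$ into two pieces, each of which I expect to re-package as a scalar multiple of a smaller instance $g(M,\cdot)$ with a shifted value of $\alpha$.

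To set notation, write
\[
g(M,\alpha) := \sum_{j=0}^M q^{j(s+1+\alpha)} {M \choose j}_q \prod_{n=1}^{M-j}(1-q^{s+n+\alpha}) \prod_{n=M-j+1}^M (1-q^{s-n-\alpha}),
\]
so the lemma asserts $g(M,\alpha) = \prod_{n=1}^M(1-q^{2s-n+1})$, an expression independent of $\alpha$. The base case $M=0$ is the empty identity $1=1$. For the inductive step I would expand $g(M+1,\alpha)$ via $q$-Pascal. The first piece (carrying $q^j{M\choose j}_q$) should re-package as $(1-q^{s+1+\alpha})\,g(M,\alpha+1)$ once the $n=1$ factor is peeled off the product $\prod_{n=1}^{M+1-j}(1-q^{s+n+\alpha})$ and both products are re-indexed under $\alpha\mapsto\alpha+1$; the extra $q^j$ factor from $q$-Pascal is exactly what upgrades $q^{j(s+1+\alpha)}$ to $q^{j(s+1+(\alpha+1))}$. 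The second piece (carrying ${M\choose j-1}_q$), after substituting $j=i+1$, should re-package as $q^{s+1+\alpha}(1-q^{s-M-1-\alpha})\,g(M,\alpha)$ by peeling the new top factor $(1-q^{s-(M+1)-\alpha})$ off the second product.

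Once those two identifications are in hand, the induction hypothesis, applied at both $\alpha$ and $\alpha+1$, yields the same value $P_M := \prod_{n=1}^M(1-q^{2s-n+1})$, so the inductive step collapses to the scalar check
\[
(1-q^{s+1+\alpha}) + q^{s+1+\alpha}(1-q^{s-M-1-\alpha}) = 1-q^{2s-M},
\]
which is a direct two-term telescoping and supplies exactly the missing $n=M+1$ factor needed to promote $P_M$ to $P_{M+1}$. The only nontrivial part of the argument is the index book-keeping in the two pieces — especially verifying that the second product in the first piece matches $\prod_{n=M-j+1}^M(1-q^{s-n-1-\alpha})$ under the shift $\alpha\mapsto\alpha+1$, and that the reindexing $j=i+1$ in the second piece aligns all three factors correctly. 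Once that verification is made, the identity follows automatically, and the $\alpha$-independence of the right-hand side is an immediate byproduct of the telescoping.
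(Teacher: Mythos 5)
Your proposal is correct and follows the paper's own proof essentially verbatim: the same induction on $M$ with $\alpha$ kept as a free parameter, the same $q$-Pascal split into $(1-q^{s+1+\alpha})f(M,\alpha+1)$ and $q^{s+1+\alpha}(1-q^{s-M-\alpha-1})f(M,\alpha)$, and the same two-term scalar check producing the factor $1-q^{2s-M}$. The index book-keeping you flag does work out exactly as you describe.
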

\begin{proof}
Let 
\[
f(M,\alpha) = \sum_{j=0}^M q^{j(s+1+\alpha)} { M \choose j}_q \prod_{n=1}^{M-j}(1-q^{s+n+\alpha})  \prod_{n=M-j+1}^M(1-q^{s-n-\alpha}).
\]
The lemma statement is 
\[
f(M,\alpha) =  \prod_{n=1}^M (1-q^{2s-n+1}).
\]
We use induction on $M$. It is true for $M=0$. Assume it is true for $M\geq 0$. Use 
\[
{ M +1\choose j}_q = q^j {M \choose j}_q+ {M \choose j-1}_q
\]
to express $f(M+1,\alpha)$ as 
\begin{align} \label{first M sum}
&\sum_{j=0}^{M} q^{j(s+1+\alpha)} q^j{ M \choose j}_q \prod_{n=1}^{M+1-j}(1-q^{s+n+\alpha})  \prod_{n=M-j+2}^M(1-q^{s-n-\alpha})\\ \label{second M sum}
+&\sum_{j=1}^{M+1} q^{j(s+1+\alpha)} { M \choose j-1}_q \prod_{n=1}^{M-j+1}(1-q^{s+n+\alpha})  \prod_{n=M-j+2}^M(1-q^{s-n-\alpha}).
\end{align}
The first sum \eqref{first M sum} is equal to 
\[
(1-q^{s+1+\alpha})f(M,\alpha+1)
\]
and the second sum \eqref{second M sum} is equal to 
\[
q^{s+1+\alpha}f(M,\alpha)(1-q^{s-M-\alpha-1}).
\]
Use the induction hypothesis to get that
\[
(1-q^{s+1+\alpha})f(M,\alpha+1,)+ q^{s+1+\alpha}f(\alpha,M)(1-q^{s-M-\alpha-1}).
\]
 this is equal to 
\[
\prod_{k=1}^{M+1} (1-q^{2s-k+1}).
\]

\end{proof}

\subsection{$q$-analogue of Identity 2}
We now present a $q$-analogue of Identity 2, the equations \eqref{identity 2 even} and \eqref{identity 2 odd}:
\begin{theorem} \label{q agm functional equation identity}
\begin{align*}
&\sum_{n=0}^m (-1)^n q^{\frac{n(n+1)}{2}-nm} (\prod_{j=1}^n \frac{1-q^{2j-1}}{1-q^{2j}})^2 \prod_{j=1}^n(1+q^{j})^2{ m+n \choose 2n}_q  \\
=&\begin{cases}   
q^{\frac{m}{2}} (\prod_{j=1}^{\frac{m}{2}} \frac{1-q^{2j-1}}{1-q^{2j}})^2 \, \text{ if $m$ is even} \\ 
0 \, \text{ if $m$ is odd}
\end{cases}
\end{align*}
\end{theorem}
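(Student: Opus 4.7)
The plan is to proceed by induction on $m$, in the spirit of the proofs of Lemmas \ref{b integer lemma} and \ref{s+b integer lemma}. The base cases are immediate: for $m=0$ only the $n=0$ term survives and yields $1 = 1$; for $m=1$ one uses that $b_1 := \bigl(\tfrac{1-q}{1-q^2}\bigr)^2(1+q)^2 = 1$ and $\binom{2}{2}_q = 1$, so the $n=0$ and $n=1$ contributions cancel to $0$, matching the odd case.

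For the inductive step, let $T(m)$ denote the left-hand side. I would apply the $q$-Pascal rule
\[
\binom{m+1+n}{2n}_q \;=\; q^{2n}\binom{m+n}{2n}_q + \binom{m+n}{2n-1}_q
\]
to split $T(m+1)$ into two sums. After reindexing $n \mapsto n-1$ in the second piece and applying the elementary recursion
\[
b_n \;=\; \frac{(1-q^{2n-1})^2}{(1-q^n)^2}\, b_{n-1},
\]
each half should become expressible in terms of $T(m)$ (or $T(m-1)$), up to a multiplicative $q$-power. Combining the two contributions and simplifying should produce the parity-dependent closed form at $m+1$, with the parity of the surviving case flipped.

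The main difficulty is that the right-hand side is not a single analytic expression in $m$ but a parity-split formula, so the naive one-step induction does not automatically close: one would like each of the two halves arising from the $q$-Pascal split to be a previously established instance, but as stated the theorem produces only one instance per $m$. The cleanest resolution, I expect, is to introduce an auxiliary parameter $\alpha$ inside the $q$-exponents (exactly paralleling the role of $\alpha$ in Lemmas \ref{b integer lemma} and \ref{s+b integer lemma}) and to prove a generalized parameterized identity $T(m,\alpha) = R(m,\alpha)$ whose specialization $\alpha = 0$ is the theorem; the extra freedom should let each half of the $q$-Pascal split match $T(m,\alpha')$ for a shifted $\alpha'$. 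An alternative route would be to mimic the classical Section \ref{identities} derivation by first establishing a $q$-analogue of the intermediate coefficient-matching identity and then applying Theorem \ref{q identity 1} to it, but since we do not have a $q$-analogue of the functional equation \eqref{functional equation F} available, proving that coefficient identity appears to be no easier than the theorem itself, so the direct parameterized induction seems preferable.
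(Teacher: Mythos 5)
Your plan is a sketch rather than a proof, and the inductive step as described does not close. The concrete obstruction is the second piece of the $q$-Pascal split: writing ${m+1+n \choose 2n}_q = q^{2n}{m+n \choose 2n}_q + {m+n \choose 2n-1}_q$ leaves you with a sum over terms containing ${m+n \choose 2n-1}_q$, whose lower index is odd and tied to the summation variable. No reindexing $n \mapsto n\pm 1$ turns ${m+n \choose 2n-1}_q$ into something of the form ${m'+n' \choose 2n'}_q$, so this half is not an instance of $T(m')$ for any $m'$, with or without an extra power of $q$ per term. (The first half is also not $T(m)$ on the nose: after absorbing the exponent shift from $-n(m+1)$ to $-nm$ it carries a residual $q^{n}$ per term.) The difficulty you do identify --- the parity-split right-hand side --- is real but secondary; the primary problem is that the sum's structure is not stable under a single application of $q$-Pascal, and your proposed fix of inserting a parameter $\alpha$ into the $q$-exponents does not address it, since no choice of exponent weights changes the lower index of the binomial. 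You would need at minimum a second auxiliary family of sums (with lower index $2n-1$) and a coupled induction, and you have not exhibited the closed forms such sums would have to satisfy.

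For contrast, the paper resolves exactly this tension by decoupling the two occurrences of $m$: it studies $I(c,m,q)$, which is your $T(m)$ with ${n+2c \choose 2n}_q$ and $q^{-2nc}$ in place of ${m+n \choose 2n}_q$ and $q^{-nm}$, and inducts on $m$ by adding one term of the sum at a time (not by Pascal-splitting the binomial). Theorem \ref{first P factoring} shows $I(c,2l+1,q)$ carries an explicit polynomial factor $P_{l+1}(c,q)$ that vanishes at $2c=2l+1$, which disposes of the odd case immediately; the even case then requires the further machinery of Lemma \ref{F to P}, Theorem \ref{second P factoring}, and the Lagrange-interpolation evaluation in Theorem \ref{evaluate G}. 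Your base cases, the recursion $b_n = \frac{(1-q^{2n-1})^2}{(1-q^n)^2}b_{n-1}$, and the instinct to generalize before inducting are all sound, but the generalization that actually works is in the binomial's argument ($m \to 2c$), not in the exponents, and the remaining work for even $m$ is substantially more than "combining the two contributions and simplifying."
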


We will use the following functions in a variable $c$:

\begin{definition} 

\begin{align*}
f_i(c,q) &= (1-q^{2c+2i-1})(1-q^{2c-2i+2})\\ 
F_i(c,q) &= \prod_{j=1}^{i}(1-q^{2c+2j-1})(1-q^{2c-2j+2}) = \prod_{j=1}^i f_j(c,q)\\
p_i(c,q) &=  (1-q^{2c+2i})(1-q^{2c-2i+1})\\
P_i(c,q) &= \prod_{j=1}^{i}(1-q^{2c+2j})(1-q^{2c-2j+1}) = \prod_{j=1}^i p_j(c,q)
\end{align*}

\end{definition} 

\begin{lemma} \label{2 factors} For any $x,y$ and $z$, 
\[
(1-q^x)(1-q^y) = (1-q^z)(1-q^{x+y-z}) +q^z(1-q^{x-z})(1-q^{y-z}).
\]
\end{lemma}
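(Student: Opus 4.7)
The plan is to verify this identity by direct algebraic expansion, since it is a purely formal statement about the ring $\mathbb{Z}[q^x,q^y,q^z,q^{-z}]$ (where the exponents are treated as formal symbols, and subtraction in exponents means division by the corresponding power). There is no induction, no combinatorial interpretation, and no prior lemma needed; the content is just that both sides reduce to the same Laurent polynomial.

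First I would expand the left-hand side to obtain
\[
(1-q^x)(1-q^y) = 1 - q^x - q^y + q^{x+y}.
\]
Then I would expand the two summands on the right-hand side separately: the first gives $1 - q^z - q^{x+y-z} + q^{x+y}$, and distributing the factor $q^z$ through $(1-q^{x-z})(1-q^{y-z}) = 1 - q^{x-z} - q^{y-z} + q^{x+y-2z}$ gives $q^z - q^x - q^y + q^{x+y-z}$.

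Adding these two expansions, the terms $-q^z$ and $+q^z$ cancel, and the terms $-q^{x+y-z}$ and $+q^{x+y-z}$ cancel, leaving $1 - q^x - q^y + q^{x+y}$, which matches the left-hand side.

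Since no step is subtle, there is no real obstacle; the only thing to be careful about is confirming that the cross terms $q^{x+y-z}$ really do appear with opposite signs in the two products on the right, which they do. I expect the author's proof to consist of essentially this one-line expansion, possibly just stated as "direct computation."
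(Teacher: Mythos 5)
Your proof is correct and matches the paper's approach: the paper simply states that the lemma ``is proved by straightforward calculation,'' and your explicit expansion of both sides into $1 - q^x - q^y + q^{x+y}$ is exactly that calculation carried out. Nothing further is needed.
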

\begin{proof}
This is proved by straightforward calculation. 
\end{proof}
The following immediate corollaries describe two ways  we will apply Lemma \ref{2 factors}.
\begin{corollary}\label{f to p}
\[
f_i(c,q) = p_l(c,q) + q^{2c-2l+1}(1-q^{2l-2i+1})(1-q^{2i+2l-2})
\]
\end{corollary}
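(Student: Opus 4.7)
The plan is to verify that Corollary \ref{f to p} is exactly the identity of Lemma \ref{2 factors} after a specific substitution, so no new calculation is needed beyond checking that the parameters match.

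First I would recall the definitions involved: $f_i(c,q) = (1-q^{2c+2i-1})(1-q^{2c-2i+2})$ and $p_l(c,q) = (1-q^{2c+2l})(1-q^{2c-2l+1})$. Then I would apply Lemma \ref{2 factors} with the substitution $x = 2c+2i-1$, $y = 2c-2i+2$, and $z = 2c-2l+1$. This immediately gives the left side as $f_i(c,q)$.

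Next I would compute the three shifted exponents that appear on the right. We have $x+y-z = (2c+2i-1)+(2c-2i+2)-(2c-2l+1) = 2c+2l$, so $(1-q^z)(1-q^{x+y-z}) = (1-q^{2c-2l+1})(1-q^{2c+2l}) = p_l(c,q)$. Also $x-z = (2c+2i-1)-(2c-2l+1) = 2i+2l-2$ and $y-z = (2c-2i+2)-(2c-2l+1) = 2l-2i+1$, so the correction term is $q^{2c-2l+1}(1-q^{2i+2l-2})(1-q^{2l-2i+1})$, exactly matching the stated right-hand side.

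Since the identity is a one-line specialization of a lemma proved earlier, there is essentially no obstacle. The only thing to watch is bookkeeping: making sure the roles of $x$ and $y$ are assigned so that $x+y-z$ lands on the correct factor of $p_l$, and confirming that the two factors $(1-q^{x-z})$ and $(1-q^{y-z})$ appear in the same order (up to commutativity of multiplication) as in the claim.
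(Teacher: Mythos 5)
Your substitution $x=2c+2i-1$, $y=2c-2i+2$, $z=2c-2l+1$ into Lemma \ref{2 factors} is exactly what the paper intends (it states the corollary as immediate from that lemma without further detail), and your exponent bookkeeping checks out. The proposal is correct and matches the paper's approach.
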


\begin{corollary} \label{q square}
\[
(1-q^a)^2 =(1-q^{a-k})(1-q^{a+k})+q^{a-k}(1-q^k)^2
\]
\end{corollary}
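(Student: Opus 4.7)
The target Corollary \ref{q square} is flagged in the text as an ``immediate corollary'' of Lemma \ref{2 factors}, so my plan is simply to specialize the three free parameters in that lemma to reduce to the desired identity; no induction or separate computation is needed.

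Recall Lemma \ref{2 factors} states
\[
(1-q^x)(1-q^y) = (1-q^z)(1-q^{x+y-z}) + q^z(1-q^{x-z})(1-q^{y-z})
\]
for arbitrary $x$, $y$, $z$. I would substitute $x = a$, $y = a$, and $z = a-k$. On the left this gives $(1-q^a)^2$, matching the left side of the corollary. On the right, the first product becomes $(1-q^{a-k})(1-q^{2a-(a-k)}) = (1-q^{a-k})(1-q^{a+k})$, while $q^z = q^{a-k}$ and the second product becomes $(1-q^{a-(a-k)})(1-q^{a-(a-k)}) = (1-q^k)^2$. Assembling, the right side is $(1-q^{a-k})(1-q^{a+k}) + q^{a-k}(1-q^k)^2$, which is exactly the right side of Corollary \ref{q square}.

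There is essentially no obstacle; the only thing to be careful about is ensuring that the substitution is recorded explicitly so the reader sees why $x+y-z$ collapses to $a+k$ and why both $x-z$ and $y-z$ collapse to $k$. If instead one wanted a self-contained proof not invoking Lemma \ref{2 factors}, a direct expansion would also work: the left side equals $1 - 2q^a + q^{2a}$, while the right side expands to $1 - q^{a-k} - q^{a+k} + q^{2a} + q^{a-k} - 2q^a + q^{a+k}$, and the cross terms $\pm q^{a-k}$ and $\pm q^{a+k}$ cancel, leaving $1 - 2q^a + q^{2a}$. Either presentation is a few lines at most.
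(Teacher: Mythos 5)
Your proposal is correct and matches the paper's intent: the paper presents this as an immediate corollary of Lemma \ref{2 factors}, and your substitution $x=a$, $y=a$, $z=a-k$ is exactly the specialization that makes this precise. The direct-expansion check you include is a valid alternative but adds nothing beyond what the lemma already provides.
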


\begin{definition} For integer $m \geq 0$, define the function $I(c,m,q)$ by
\[
I(c,m,q) =  \sum_{n=0}^{m} (-1)^n q^{\frac{n(n+1)}{2}-2nc} (\prod_{j=1}^n \frac{1-q^{2j-1}}{1-q^{2j}})^2 \prod_{j=1}^n (1+q^j)^2{ n+2c\choose 2n}_q. 
\]
\end{definition} 

\begin{theorem} \label{first P factoring}For integer $l \geq 0$, 
 \begin{align*}
 &I(c,2l,q)\\
 =& (-1)^{l}q^{l^2-2lc} \frac{P_l(c,q)}{\prod_{j=1}^{2l} (1-q^j)}  \sum_{i=0}^l (-1)^i q^{i(i+1)-2ic}\frac{F_i(c,q)}{\prod_{j=1}^i (1-q^{2j})^2} \frac{\prod_{j=0}^{i-1}(1-q^{2l+2j+1})^2}{\prod_{j=0}^{2i-1} (1-q^{2l+j+1})}
\end{align*}
and 
 \begin{align*}
 &I(c,2l+1,q)\\
 =& (-1)^{l+1}q^{(l+1)^2-2(l+1)c} \frac{P_{l+1}(c,q)}{\prod_{j=1}^{2l+2} (1-q^j)}  \sum_{i=0}^l (-1)^i q^{i(i+1)-2ic}\frac{F_i(c,q)}{\prod_{j=1}^i (1-q^{2j})^2} \frac{\prod_{j=0}^{i-1}(1-q^{2l+2j+3})^2}{\prod_{j=0}^{2i-1} (1-q^{2l+j+3})}.
\end{align*}
\end{theorem}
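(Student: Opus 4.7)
The plan is to prove both formulas by induction on $l$. The base case $l=0$ reduces, in the even case, to $I(c,0,q)=1=P_0(c,q)$ (only the $i=0$ term contributes on the right), and in the odd case $I(c,1,q)$ can be matched to $-q^{1-2c}P_1(c,q)/\prod_{j=1}^2(1-q^j)$ by direct algebraic rearrangement.

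The essential preliminary is a symmetric pair-factorization of the $q$-binomial. Writing
\[
{n+2c \choose 2n}_q = \frac{\prod_{j=2c-n+1}^{2c+n}(1-q^j)}{\prod_{j=1}^{2n}(1-q^j)},
\]
and pairing the outermost numerator factors from both ends gives
\[
{2k+2c \choose 4k}_q = \frac{F_k(c,q)\,P_k(c,q)}{\prod_{j=1}^{4k}(1-q^j)}, \qquad {(2k+1)+2c \choose 4k+2}_q = \frac{F_{k+1}(c,q)\,P_k(c,q)}{\prod_{j=1}^{4k+2}(1-q^j)}.
\]
Thus $P_l(c,q)$ (respectively $P_{l+1}(c,q)$) divides every term of $I(c,2l,q)$ (respectively $I(c,2l+1,q)$), matching its role as a global prefactor on the right-hand side.

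For the inductive step in the even case, assume the formula for $I(c,2l-2,q)$ and append the two new terms at $n=2l-1$ and $n=2l$ required to reach $I(c,2l,q)$. By the factorization above, these contribute $F_l P_{l-1}$ and $F_l P_l$, so one applies Corollary \ref{f to p} to convert the extra $f_l(c,q)$ into $p_l(c,q)$ plus a correction weighted by $q^{2c-2l+1}$, thereby promoting $P_{l-1}(c,q)$ to $P_l(c,q)$ in the global prefactor. Corollary \ref{q square} is then used to split the squared factors $(1-q^{2j-1})^2$ arising from $(\prod_{j=1}^n \tfrac{1-q^{2j-1}}{1-q^{2j}})^2$: one piece is absorbed into the new $i=l$ summand of the inner sum, while the other feeds back into the lower-$i$ summands inherited from the induction hypothesis. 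The odd case proceeds identically with $F_{l+1}$ in place of $F_l$, using the transition $I(c,2l-1,q)\to I(c,2l+1,q)$.

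The main obstacle will be the bookkeeping of exponents and shifts. The correction terms produced by Corollaries \ref{f to p} and \ref{q square} come with their own $q$-weights, and these must combine with the defining weight $q^{\frac{n(n+1)}{2}-2nc}$ to reproduce exactly the prefactor $q^{l^2-2lc}$ and the inner weight $q^{i(i+1)-2ic}$. Verifying that the denominators $\prod_{j=0}^{2i-1}(1-q^{2l+j+1})$ and the numerators $\prod_{j=0}^{i-1}(1-q^{2l+2j+1})^2$ shift correctly when $l$ advances by one — and that the previous inner sum reindexes cleanly into the new one — is where the bulk of the technical work lies.
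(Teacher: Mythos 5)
Your architecture matches the paper's: induction on $l$, the pair-factorization ${n+2c\choose 2n}_q=\prod_{j=2c-n+1}^{2c+n}(1-q^j)/\prod_{j=1}^{2n}(1-q^j)$ split into $F\cdot P$ (your two displayed factorizations are correct and are exactly what the paper uses), and Corollaries \ref{f to p} and \ref{q square} to promote the $P$-prefactor. Your base cases also check out. The only structural difference is that you run two stride-$2$ inductions (even$\to$even, odd$\to$odd) where the paper runs a single stride-$1$ induction; this merely merges the paper's hard step (even $2L\to$ odd $2L+1$, where the prefactor is promoted from $P_L$ to $P_{L+1}$ and the inner-sum parameter shifts from $2L+1$ to $2L+3$) with its easy step (odd $2L+1\to$ even $2L+2$, where the appended term is verbatim the new $i=L+1$ summand and nothing else changes).

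The genuine gap is in how you describe the promotion of the prefactor. You present it as a one-shot operation: apply Corollary \ref{f to p} once to "the extra $f_l$," absorb one piece into the new top summand, and let the other "feed back into the lower-$i$ summands." That is not how the redistribution works, and a single application will not produce the claimed right-hand side, because \emph{every} summand of the inherited inner sum must simultaneously acquire a factor of $p_{L+1}(c,q)$ and have its parameter shifted ($2L+1\mapsto 2L+3$ in every numerator and denominator). The paper organizes this as a downward telescoping cascade: it defines a hybrid partial sum $S(h)$ consisting of the first $h+1$ old-style summands plus one new-style trailing term indexed by $h+1$, and proves the recursion $S(h)=S(h-1)-\frac{q^{2L+1-2c}p_{L+1}(c,q)}{(1-q^{2L+1})(1-q^{2L+2})}\cdot T_h$, where $T_h$ is the $h$-th old-style summand. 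Each step of this recursion requires Corollary \ref{f to p} applied to the pair $(f_{h+1},p_{L+1})$ \emph{and} Corollary \ref{q square} with $a=2L+1$, $k=2h$ — so the parameters of both corollaries vary with the summand index $h$, and there are $L+1$ distinct three-term identities to verify, one per level. Unwinding the telescope down to $S(0)$ is what extracts the uniform factor $p_{L+1}(c,q)/((1-q^{2L+1})(1-q^{2L+2}))$ from the whole sum and converts every summand to the shifted form. This cascade is the actual content of the theorem, not exponent bookkeeping; your plan names the right two corollaries but is missing the telescoping device that makes them act coherently across all $l+1$ summands at once.
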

\begin{proof}
We use induction. The statement is true for $l=0$.  Assume it is true for $l=L\geq 0$. Then we consider 
\begin{align} \nonumber{}
&(-1)^{L}q^{L^2-2Lc} \frac{P_L(c,q)}{\prod_{j=1}^{2L} (1-q^j)}  \sum_{i=0}^L (-1)^i q^{i(i+1)-2ic}\frac{F_i(c,q)}{\prod_{j=1}^i (1-q^{2j})^2} \frac{\prod_{j=0}^{i-1}(1-q^{2L+2j+1})^2}{\prod_{j=0}^{2i-1} (1-q^{2L+j+1})}  \label{first induction step} \\
+& (-1)^{2L+1} q^{\frac{(2L+1)(2L+2)}{2}-2(2L+1)c} (\prod_{j=1}^{2L+1} \frac{1-q^{2j-1}}{1-q^{2j}})^2 \prod_{j=1}^{2L+1} (1+q^j)^2{ 2L+1+2c\choose 4L+2}_q. 
\end{align}

Now 
\[
{ 2L+1+2c\choose 4L+2}_q = \frac{P_L(c,q)}{\prod_{j=1}^{2L}(1-q^{j)}}  \frac{F_{L+1}(c,q)}{\prod_{j=2L+1}^{4L+2} (1-q^j)}
\]
and 
\[
(\prod_{j=1}^{2L+1} \frac{1-q^{2j-1}}{1-q^{2j}})^2 \prod_{j=1}^{2L+1} (1+q^j)^2 = \frac{\prod_{j=0}^{L-1}(1-q^{2L+3+2j})^2}{\prod_{j=1}^{L}(1-q^{2j})^2}.
\]
Combining these we get 
\begin{align*}
&(\prod_{j=1}^{2L+1} \frac{1-q^{2j-1}}{1-q^{2j}})^2 \prod_{j=1}^{2L+1} (1+q^j)^2{ 2L+1+2c\choose 4L+2}_q\\ 
=&  \frac{P_L(c,q)}{\prod_{j=1}^{2L} (1-q^j)}\frac{F_{L+1}(c,q)}{\prod_{j=1}^{L}(1-q^{2j})^2} \frac{\prod_{j=0}^{L-1}(1-q^{2L+3+2j})^2}{\prod_{j=-2}^{2L-1}(1-q^{2L+3+j})}. 
 \end{align*}
 This allows us to express \eqref{first induction step} as 
 \begin{align*}
 \nonumber{}
&(-1)^{L}q^{L^2-2Lc} \frac{P_L(c,q)}{\prod_{j=1}^{2L} (1-q^j)} \big(  \sum_{i=0}^L (-1)^i q^{i(i+1)-2ic}\frac{F_i(c,q)}{\prod_{j=1}^i (1-q^{2j})^2} \frac{\prod_{j=0}^{i-1}(1-q^{2L+2j+1})^2}{\prod_{j=0}^{2i-1} (1-q^{2L+j+1})}  \\
+& (-1)^{L+1}q^{L(L+1) - 2Lc+2L+1-2c}\frac{F_{L+1}(c,q)}{\prod_{j=1}^{L}(1-q^{2j})^2} \frac{\prod_{j=0}^{L-1}(1-q^{2L+3+2j})^2}{\prod_{j=-2}^{2L-1}(1-q^{2L+3+j})}\big).
 \end{align*}
Now for any $0\leq h\leq L$, let $S(h)$ denote
\begin{align*}
 \nonumber{}
S(h)=&\sum_{i=0}^h (-1)^i q^{i(i+1)-2ic}\frac{F_i(c,q)}{\prod_{j=1}^i (1-q^{2j})^2} \frac{\prod_{j=0}^{i-1}(1-q^{2L+2j+1})^2}{\prod_{j=0}^{2i-1} (1-q^{2L+j+1})}  \\
+& (-1)^{h+1}q^{h(h+1) - 2hc+2L+1-2c}\frac{F_{h+1}(c,q)}{\prod_{j=1}^{h}(1-q^{2j})^2} \frac{\prod_{j=0}^{h-1}(1-q^{2L+3+2j})^2}{\prod_{j=-2}^{2h-1}(1-q^{2L+3+j})}.\\
\end{align*}
For $h \geq 1$, we claim
\[
S(h) = S(h-1) - \frac{q^{2L+1-2c}p_{L+1}(c,q)}{(1-q^{2L+1})(1-q^{2L+2}))} (-1)^h q^{h(h+1)-2hc}\frac{F_h(c,q)}{\prod_{j=1}^h (1-q^{2j})^2} \frac{\prod_{j=0}^{h-1}(1-q^{2L+2j+1})^2}{\prod_{j=0}^{2h-1} (1-q^{2L+j+1})}.
\]
This follows from taking the $h$-th and $(h+1)$-th term in $S(h)$ and first applying Corollary \ref{f to p} for $f_{h+1}(c,q)$ and $p_{L+1}(c,q)$; and then Corollary \ref{q square} for $a=2L+1$ and $k = 2h$: 

\begin{align*} 
&(-1)^h q^{h(h+1)-2hc}\frac{F_h(c,q)}{\prod_{j=1}^h (1-q^{2j})^2} \frac{\prod_{j=0}^{h-1}(1-q^{2L+2j+1})^2}{\prod_{j=0}^{2h-1} (1-q^{2L+j+1})} \\
+&(-1)^{h+1}q^{h(h+1) - 2hc+2L+1-2c}\frac{F_{h+1}(c,q)}{\prod_{j=1}^{h}(1-q^{2j})^2} \frac{\prod_{j=0}^{h-1}(1-q^{2L+3+2j})^2}{\prod_{j=-2}^{2h-1}(1-q^{2L+3+j})}\\ 
&= (-1)^{h}q^{(h-1)h - 2(h-1)c+2L+1-2c}\frac{F_{h}(c,q)}{\prod_{j=1}^{h-1}(1-q^{2j})^2} \frac{\prod_{j=0}^{h-2}(1-q^{2L+3+2j})^2}{\prod_{j=-2}^{2h-3}(1-q^{2L+3+j})}\\ 
-& \frac{q^{2L+1-2c}p_{L+1}(c,q)}{(1-q^{2L+1})(1-q^{2L+2}))} (-1)^h q^{h(h+1)-2hc}\frac{F_h(c,q)}{\prod_{j=1}^h (1-q^{2j})^2} \frac{\prod_{j=0}^{h-1}(1-q^{2L+2j+1})^2}{\prod_{j=0}^{2h-1} (1-q^{2L+j+1})}
\end{align*}
By the same reasoning we check
\[
S(0) = -\frac{q^{2L+1-2c}p_{L+1}(c,q)}{(1-q^{2L+1})(1-q^{2L+2}))}. 
\]
Now use 
\[
(-1)^{L}q^{L^2-2Lc} \frac{P_L(c,q)}{\prod_{j=1}^{2l} (1-q^j)} \left (-\frac{q^{2L+1-2c}p_{L+1}(c,q)}{(1-q^{2L+1})(1-q^{2L+2})} \right )= (-1)^{L+1}q^{(L+1)^2-2(L+1)c} \frac{P_{L+1}(c,q)}{\prod_{j=1}^{2L+2} (1-q^j)} 
\]
to get 

\begin{align*}
&I(c,2L+1,q)\\
 =& (-1)^{L+1}q^{(L+1)^2-2(L+1)c} \frac{P_{L+1}(c,q)}{\prod_{j=1}^{2L+2} (1-q^j)}  \sum_{i=0}^L (-1)^i q^{i(i+1)-2ic}\frac{F_i(c,q)}{\prod_{j=1}^i (1-q^{2j})^2} \frac{\prod_{j=0}^{i-1}(1-q^{2L+3+2j})^2}{\prod_{j=0}^{2i-1} (1-q^{2L+3+j})}.
\end{align*}
This completes the part of the theorem for $I(c,2L+1,q)$. To this we add 
\begin{align*}
&(-1)^{2L+2} q^{\frac{(2L+2)(2L+3)}{2}-2(2L+2)c} (\prod_{j=1}^{2L+2} \frac{1-q^{2j-1}}{1-q^{2j}})^2 \prod_{j=1}^{2L+2} (1+q^j)^2{ 2L+2+2c\choose 4L+4}_q \\
& =  (-1)^{L+1}q^{(L+1)^2-2(L+1)c} \frac{P_{L+1}(c,q)}{\prod_{j=1}^{2L+2} (1-q^j)}  \\ 
&\times \left((-1)^{L+1} q^{(L+1)(L+2)-2(L+1)c} \frac{F_{L+1}(c,q)}{\prod_{j=1}^{L+1} (1-q^{2j})^2} \frac{\prod_{j=0}^{L}(1-q^{2L+3+2j})^2}{\prod_{j=0}^{2L+1} (1-q^{2L+3+j})}. \right)\\
\end{align*}
which completes the part of the theorem for $I(c,2L+2,q)$.
\end{proof}
\begin{corollary}
Theorem \ref{q agm functional equation identity} is true for the case of odd $m$.  
\end{corollary}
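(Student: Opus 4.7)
The plan is to deduce the corollary directly from the closed form for $I(c,2l+1,q)$ established in Theorem \ref{first P factoring}, by choosing $c$ so that the factored expression $I(c,2l+1,q)$ reduces to the sum appearing in Theorem \ref{q agm functional equation identity}, and then by exhibiting a vanishing factor.

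First, I would compare the sum in the statement of Theorem \ref{q agm functional equation identity} with the definition of $I(c,m,q)$. The only difference is that $I(c,m,q)$ has $q^{\frac{n(n+1)}{2}-2nc}$ and $\binom{n+2c}{2n}_q$, while the theorem sum has $q^{\frac{n(n+1)}{2}-nm}$ and $\binom{m+n}{2n}_q$. So setting $2c = m$ (equivalently $c = m/2$) identifies the sum in Theorem \ref{q agm functional equation identity} with $I(m/2,m,q)$. For odd $m = 2l+1$, this means we must evaluate $I((2l+1)/2,\,2l+1,\,q)$.

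Next, I would apply the odd-case formula from Theorem \ref{first P factoring}, which expresses $I(c,2l+1,q)$ as a product containing the factor $P_{l+1}(c,q)$ times a finite sum. Recall that
\[
P_{l+1}(c,q) = \prod_{j=1}^{l+1}(1-q^{2c+2j})(1-q^{2c-2j+1}).
\]
Substituting $c = (2l+1)/2$ (so $2c = 2l+1$), the factor corresponding to $j = l+1$ in the second product is
\[
1 - q^{2c - 2(l+1)+1} \;=\; 1 - q^{(2l+1) - 2l - 2 + 1} \;=\; 1 - q^0 \;=\; 0.
\]
Hence $P_{l+1}((2l+1)/2,q) = 0$, and therefore $I((2l+1)/2,\,2l+1,\,q) = 0$, which is exactly the claim of Theorem \ref{q agm functional equation identity} for odd $m$.

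There is no real obstacle here: once the identification $2c = m$ is made, the corollary is immediate from Theorem \ref{first P factoring} together with the vanishing of the single factor $(1-q^0)$ in $P_{l+1}$. The only point requiring a quick sanity check is that the remaining finite sum in Theorem \ref{first P factoring} does not have a pole at $c = (2l+1)/2$ that could cancel this zero — but inspection shows the denominators are products $\prod(1-q^{2j})^2$ and $\prod(1-q^{2l+j+3})$, all of whose exponents are strictly positive integers independent of $c$, so no cancellation occurs and the product vanishes as claimed.
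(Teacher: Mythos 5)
Your proof is correct and is essentially identical to the paper's own argument: both identify the theorem's sum with $I(m/2,m,q)$, invoke the factor $P_{l+1}(c,q)$ from Theorem \ref{first P factoring}, and observe that it vanishes at $c=l+\tfrac{1}{2}$ because of the $j=l+1$ factor $1-q^{0}$. Your added check that no denominator introduces a compensating pole is a reasonable extra sanity check, but the substance of the argument is the same.
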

\begin{proof}
Theorem \ref{first P factoring} shows that $I(c,2l+1,q)$ has a factor of $P_{l+1}(c,q)$. Evaluating at $c= l+\frac{1}{2}$ yields $P_{l+1}(l+\frac{1}{2})=0$.  
\end{proof}
We introduce the variable $a$: 
\begin{definition} For integer $l\geq 0$, define the function $G(c,a,l,q)$
\[
G(c,a,l,q)= \sum_{i=0}^l (-1)^i 
 q^{(c-i)(c-i-1)} \frac{F_i(c,q)}{\prod_{j=1}^i (1-q^{2j})^2} \frac{\prod_{j=0}^{i-1}(1-q^{a+2j})^2}{\prod_{j=0}^{2i-1} (1-q^{a+j})}.
\]
\end{definition}
With this function we can express Theorem \ref{first P factoring} as 
\begin{align*}
 &I(c,2l,q)\\
 =& (-1)^{l}q^{l^2-2lc} \frac{P_l(c,q)}{\prod_{j=1}^{2l} (1-q^j)}  q^{-c^2+c} G(c,l,2l+1,q) 
\end{align*}
and 
 \begin{align*}
 &I(c,2l+1,q)\\
 =& (-1)^{l+1}q^{(l+1)^2-2(l+1)c} \frac{P_{l+1}(c,q)}{\prod_{j=1}^{2l+2} (1-q^j)}   q^{-c^2+c} G(c,l,2l+3,q)
\end{align*}
We now evaluate $G(c,a,l,q)$ in terms of the $P_i(c,q)$: 
\begin{theorem} \label{second P factoring}
\[
G(c,a,l,q) = \sum_{i=0}^l (-1)^i q^{(c-i)(c-i-1)}\frac{P_i(c,q)}{\prod_{j=1}^i (1-q^{2j})^2} (\frac{1-q^a}{1-q^{a+2i}})(\prod_{j=1}^{l-i}\frac{1-q^{2j-1}}{1-q^{2j}})  (\prod_{j=1}^{l}\frac{1-q^{a+2j}}{1-q^{a+2j-1}}) 
\]
\end{theorem}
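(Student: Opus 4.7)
I would proceed by induction on $l$, adapting the telescoping template used in the proof of Theorem \ref{first P factoring}. The base case $l=0$ is immediate: both sides collapse to their $i=0$ summand and equal $q^{c(c-1)}$, once one notes that $F_0(c,q)=P_0(c,q)=1$ and that each empty product appearing in the $i=0$ term on the right-hand side is $1$.

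For the inductive step, assume the identity at $l$ and consider $G(c,a,l+1,q)$. I would introduce an intermediate family $S(h)$ for $0\le h\le l+1$, whose first $h$ summands are already written in the target ``$P$-form'' appearing on the right-hand side and whose remaining summands are still in the original ``$F$-form'' of the definition of $G$, arranged so that $S(0)=G(c,a,l+1,q)$ while $S(l+1)$ equals the theorem's right-hand side at $l+1$. The transition $S(h-1)\to S(h)$ would be accomplished by applying Corollary \ref{f to p} to the leading $f$-factor of the $h$-th summand to produce a $p$-factor plus a correction; the correction would then be combined with the $(h-1)$-st summand via Corollary \ref{q square} and collapse into a single term in the correct $P$-form, giving $S(h)=S(h-1)$. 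Summing the telescope from $h=0$ to $h=l+1$ delivers the inductive step.

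The main obstacle is bookkeeping the $a$-dependent rational factors, which are absent in Theorem \ref{first P factoring}. In $G(c,a,l,q)$ they take the form $\prod_{j=0}^{i-1}(1-q^{a+2j})^2/\prod_{j=0}^{2i-1}(1-q^{a+j})$, while on the right they are reorganized as $(1-q^{a})/(1-q^{a+2i})$ together with the $i$-independent prefactor $\prod_{j=1}^{l}(1-q^{a+2j})/(1-q^{a+2j-1})$ and the $(l-i)$-dependent factor $\prod_{j=1}^{l-i}(1-q^{2j-1})/(1-q^{2j})$. Verifying that each telescoping step produces exactly these reorganized factors, and that the passage from $l$ to $l+1$ introduces the new overall factor $(1-q^{a+2l+2})/(1-q^{a+2l+1})$ without spurious residue, is where the routine computation is concentrated; Corollaries \ref{f to p} and \ref{q square} supply the algebraic engine, but managing their interaction with the $a$-dependent factors is delicate and will require careful choice of the parameters $l,z$ in each invocation.
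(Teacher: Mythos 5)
Your base case is right, and induction on $l$ is indeed the framework the paper uses, but the engine you propose for the inductive step is structurally mismatched to this identity and leaves the real content unproved. The conversion from the $F$-form to the $P$-form here is not a local operation: the $i$-th summand of $G$, which carries $F_i(c,q)$, must be redistributed over \emph{all} of $P_0(c,q),\dots,P_i(c,q)$, and conversely the $i$-th summand of the right-hand side receives contributions from every $F$-summand of index at least $i$. (Compare the $i=0$ summands: in $G$ it is $q^{c(c-1)}$, while on the right-hand side it is $q^{c(c-1)}\prod_{j=1}^{l}\frac{1-q^{2j-1}}{1-q^{2j}}\prod_{j=1}^{l}\frac{1-q^{a+2j}}{1-q^{a+2j-1}}$, so the $i=0$ term cannot be converted by borrowing only from its neighbor.) A telescope $S(h)=S(h-1)$ in which each step touches only the $h$-th summand plus a single correction absorbed into the $(h-1)$-st therefore cannot work as described; this is exactly where the analogy with Theorem \ref{first P factoring} breaks down, since there the quantity peeled off at every stage is the \emph{same} factor $p_{L+1}(c,q)$, destined for the common prefactor $P_{L+1}(c,q)$, whereas here the redistribution genuinely mixes all the terms. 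There is also an internal inconsistency in your setup: if you invoke the inductive hypothesis at $l$, then all but the single new $i=l+1$ summand are already in $P$-form (at level $l$), so an $S(0)$ whose ``remaining summands are still in the original $F$-form'' is not the expression you actually need to transform.

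The missing idea is a closed-form change of basis. The paper isolates this as Lemma \ref{F to P}, which expands $F_h(c,q)=\sum_{v=0}^{h} q^{2cv}P_{h-v}(c,q) F_{v}(-1,q)\, q^{2v(1-h+v)}\prod_{k=1}^{h-v} \frac{(1-q^{2k+2v})^2}{(1-q^{2k})^2}$ and is proved by its own induction via Corollary \ref{f to p}. With it, the inductive step becomes: expand the single new $F_{l+1}$-summand in the $P_i$ ``basis,'' then verify for each $0\le i\le l+1$ that the coefficients of $P_i(c,q)$ on the two sides agree; each such verification reduces to a two-term identity that is an instance of Lemma \ref{2 factors}. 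Your instinct that Corollaries \ref{f to p} and \ref{q square} are the algebraic engine is correct, but they enter through Lemma \ref{F to P} and the coefficient comparison, not through the summand-by-summand telescope you describe. To repair the proposal you would either need to state and prove the $F\to P$ expansion (at which point you have the paper's argument), or redesign the telescope top-down so that the correction at stage $h$ carries the entire unexpanded remainder involving $F_{l+1-h}(c,q)$ rather than a single local residue --- which amounts to inlining the proof of Lemma \ref{F to P}.
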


To prove this we first express the $F_i(c,q)$ in terms of the $P_i(c,q)$:
\begin{lemma} \label{F to P}For integer $h \geq 0$, 
\[
F_h(c,q) = \sum_{v=0}^{h} q^{2cv}P_{h-v}(c,q) F_{v}(-1,q) q^{2v(1-h+v)}\prod_{k=1}^{h-v} \frac{(1-q^{2k+2v})^2}{(1-q^{2k})^2}
\]
\end{lemma}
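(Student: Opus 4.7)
The plan is to prove the identity by induction on $h$. The base case $h=0$ is immediate since both sides equal $1$. For the inductive step, I would write $F_{h+1}(c,q) = F_h(c,q) f_{h+1}(c,q)$, apply the inductive hypothesis to expand $F_h(c,q)$, and for each term $q^{2cv}P_{h-v}(c,q)$ in the expansion invoke Corollary \ref{f to p} with $i = h+1$ and $l = h-v+1$ to write
\[
f_{h+1}(c,q) = p_{h-v+1}(c,q) + q^{2c} E_v(q),
\]
where $E_v(q) := q^{-2h+2v-1}(1-q^{1-2v})(1-q^{4h-2v+2})$ is independent of $c$. Using $P_{h-v}(c,q) \cdot p_{h-v+1}(c,q) = P_{h-v+1}(c,q)$, the first piece of each summand contributes to the $v$-th term of the expected expansion of $F_{h+1}$, while the factor $q^{2c}$ in the correction shifts the second piece into the $(v+1)$-th term. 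Reindexing then reduces the induction to the scalar recurrence
\[
C_{v,h+1}(q) = C_{v,h}(q) + C_{v-1,h}(q) E_{v-1}(q), \qquad 0 \le v \le h+1,
\]
where $C_{v,h}(q) := F_v(-1,q) q^{2v(1-h+v)} \prod_{k=1}^{h-v}(1-q^{2k+2v})^2/(1-q^{2k})^2$ is the claimed coefficient and we use the conventions $C_{-1,h} = C_{h+1,h} = 0$.

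To verify this recurrence, I would divide through by $C_{v,h}(q)$. A direct computation from the explicit formulas gives $C_{v,h+1}(q)/C_{v,h}(q) = q^{-2v}(1-q^{2h+2})^2/(1-q^{2h-2v+2})^2$, so the left side of the recurrence becomes
\[
q^{-2v}\frac{(1-q^{2h+2})^2 - q^{2v}(1-q^{2h-2v+2})^2}{(1-q^{2h-2v+2})^2}.
\]
Applying Corollary \ref{q square} with $a = 2h+2$ and $k = 2h-2v+2$ rewrites the numerator as $(1-q^{2v})(1-q^{4h-2v+4})$. On the other side, $F_{v-1}(-1,q)/F_v(-1,q) = 1/f_v(-1,q)$, and an elementary calculation yields $f_v(-1,q) = q^{-3}(1-q^{3-2v})(1-q^{2v})$ (valid uniformly for all $v \ge 1$, even though the definition of $f_v(-1,q)$ involves negative powers of $q$ for small $v$). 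Substituting these along with the explicit form of $E_{v-1}(q)$ into $C_{v-1,h}(q) E_{v-1}(q)/C_{v,h}(q)$ produces exactly $q^{-2v}(1-q^{2v})(1-q^{4h-2v+4})/(1-q^{2h-2v+2})^2$, matching the left side. The boundary cases $v=0$ (trivial, as $C_{0,h} = 1$ for all $h$) and $v=h+1$ (reduces to the identity $F_{h+1}(-1,q) = F_h(-1,q) f_{h+1}(-1,q)$) are handled directly.

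The main obstacle will be the bookkeeping in verifying the coefficient recurrence: tracking powers of $q$ through the factors $F_v(-1,q)$, the double product of ratios $\prod_{k=1}^{h-v}(1-q^{2k+2v})^2/(1-q^{2k})^2$, and in particular the sign flips introduced by negative-exponent factors such as $(1-q^{-2v})$ inside $F_v(-1,q)$ and $(1-q^{1-2v})$ inside $E_v(q)$ when $v$ is small. The decisive structural observation making the argument close is that Corollary \ref{q square} is precisely the quadratic identity needed to factor $(1-q^{2h+2})^2 - q^{2v}(1-q^{2h-2v+2})^2$ into a product matching the right-hand ratio, so once the relevant exponents are aligned the verification becomes routine.
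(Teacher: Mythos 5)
Your proposal is correct and follows essentially the same route as the paper: induction on $h$, multiplying the inductive hypothesis by $f_{h+1}(c,q)$, splitting $f_{h+1}(c,q)P_{h-v}(c,q)$ via Corollary \ref{f to p} with the same choice of indices, and matching coefficients of $P_{h+1-v}(c,q)$, with the resulting scalar identity being exactly the instance of Corollary \ref{q square} the paper uses (your version with $a=2h+2$, $k=2h-2v+2$ is the paper's identity multiplied through by $q^{2v}$). Your explicit formulation of the coefficient recurrence $C_{v,h+1}=C_{v,h}+C_{v-1,h}E_{v-1}$ and the separate treatment of the boundary cases $v=0$ and $v=h+1$ is a slightly cleaner bookkeeping of the same argument.
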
 
\begin{proof}
We use induction. The statement is true for $h=0$. Assume it is true for an $h\geq 0$. 
We then multiply both sides by $f_{h+1}(c,q)$. To each $f_{h+1}(c,q)P_{h-v}(c,q)$ on the right side we apply Corollary \ref{f to p} obtain 

\[
f_{h+1}(c,q)P_{h-v}(c,q) = P_{h-v+1}(c,q) + q^{2c-2(h-v+1)+1}(1-q^{-2v+1})(1-q^{4h-2v+2})P_{h-v}(c,q). 
\]
Then we collect terms to equate the coefficient of $P_{h+1-v}(c)$ for each $0 \leq v \leq h+1$ with the coefficient in the lemma. For $v=0$, we get 
\[
1=1.
\]
For $1 \leq v \leq h+1$, we get
\begin{align*}
&F_v(-1) q^{2v(v-h+1)+2cv} \prod_{k=1}^{h-v} \frac{(1-q^{2k+2v})^2}{(1-q^{2k})^2}\\ 
+&q^{2c-1-2(h-v+1)}(1-q^{2(h-v+1)-2h+1})(1-q^{2(h-v+1)+2h+2})F_{v-1}(-1)q^{2(v-1)(v-h)+2c(v-1)}\prod_{k=1}^{h-v+1} \frac{(1-q^{2k+2(v-1)})^2}{(1-q^{2k})^2}\\ 
=&F_v(-1) q^{2v(v-h)+2cv} \prod_{k=1}^{h-v+1} \frac{(1-q^{2k+2v})^2}{(1-q^{2k})^2}.
\end{align*}
The above equation is implied by the following equation 
\begin{align*}
&(1-q^{2v-3})(1-q^{-2v})q^{2v(v-h+1)+2 c v} \\ 
+&(1-q^{4h-2v+4})(1-q^{-2v+3})(\frac{1-q^{2v}}{1-q^{2h-2v+2}})^2 q^{2(v-1)(v-h)+2c(v-1)+2c-1-2(h-v+1)}\\ 
=&(1-q^{2v-3})(1-q^{-2v})(\frac{1-q^{2h+2}}{1-q^{2h-2v+2}})^2 q^{2v(v-h)+2 c v}.
\end{align*}
The above equation reduces to the following which is an instance of Corollary \ref{q square}: 
\[
(1-q^{2+2h-2v})^2 = (1-q^{4+4h-2v})(1-q^{-2v})+ q^{-2v}(1-q^{2+2h})^2.
\]
This completes the proof.
\end{proof}

Now we can prove Theorem \ref{second P factoring}:
\begin{proof}
We use induction on $l$. The statement is true for $l=0$. Assume it is true for some $l \geq 0$. Then we must show 
\begin{align*}
& \sum_{i=0}^l (-1)^i q^{(c-i)(c-i-1)}\frac{P_i(c,q)}{\prod_{j=1}^i (1-q^{2j})^2} (\frac{1-q^a}{1-q^{a+2i}})(\prod_{j=1}^{l-i}\frac{1-q^{2j-1}}{1-q^{2j}})  (\prod_{j=1}^{l}\frac{1-q^{a+2j}}{1-q^{a+2j-1}}) \\
+& (-1)^i 
 q^{(c-l-1)(c-l-2)} \frac{F_{l+1}(c,q)}{\prod_{j=1}^{l+1} (1-q^{2j})^2} \frac{\prod_{j=0}^{l}(1-q^{a+2j})^2}{\prod_{j=0}^{2l+1} (1-q^{a+j})}\\ 
 &= \sum_{i=0}^{l+1} (-1)^i q^{(c-i)(c-i-1)}\frac{P_i(c,q)}{\prod_{j=1}^i (1-q^{2j})^2} (\frac{1-q^a}{1-q^{a+2i}})(\prod_{j=1}^{l+1-i}\frac{1-q^{2j-1}}{1-q^{2j}})  (\prod_{j=1}^{l+1}\frac{1-q^{a+2j}}{1-q^{a+2j-1}}). 
\end{align*}
We apply Lemma \ref{F to P} to $F_{l+1}(c,q)$  and equate the coefficient of $P_i(c,q)$ to the that in the Theorem to obtain for $0 \leq i \leq l$:

\begin{align*}
&(-1)^i q^{(c-i)(c-i-1)} \frac{1}{\prod_{j=1}^i (1-q^{2j})^2} \frac{(1-q^a)}{(1-q^{a+2i})} \prod_{j=1}^{l-i}\frac{(1-q^{2j-1})}{(1-q^{2j})}\prod_{j=1}^{l}\frac{(1-q^{a+2j})}{(1-q^{a+2j-1})}\\ 
+&(-1)^{l+1} q^{(c-l-1)(c-l-2)+2(l+1-i)(1-i)+2c(l+1-i)} \frac{1}{\prod_{j=1}^{l+1} (1-q^{2j})^2} F_{l+1-i}(-1)\\
\times & \prod_{j=0}^{l} \frac{(1-q^{a+2j})}{(1-q^{a+2j+1})}\prod_{k=1}^i \frac{(1-q^{2k+2(l+1-i)})^2}{(1-q^{2k})^2}\\ 
=&(-1)^i q^{(c-i)(c-i-1)} \frac{1}{\prod_{j=1}^i (1-q^{2j})^2} \frac{(1-q^a)}{(1-q^{a+2i})} \prod_{j=1}^{l+1-i}\frac{(1-q^{2j-1})}{(1-q^{2j})}\prod_{j=1}^{l+1}\frac{(1-q^{a+2j})}{(1-q^{a+2j-1})}.
\end{align*} 

The above equation is implied by the following equation:

\begin{align*}
&q^{(c-i)(c-i-1)}\frac{(1-q^a)}{1-q^{a+2i}}\\ 
+ &q^{(c-l-1)(c-l-2)+2(l+1-i)(1-i)+2c(l+1-i)-(l+1-i)(l+2-i)}\frac{(1-q^{-1})(1-q^a)}{(1-q^{2l+2-2i})(1-q^{a+2l+1})}\\ 
=&q^{(c-i)(c-i-1)}\frac{(1-q^a)(1-q^{a+2l+2})(1-q^{2l-2i+1})}{(1-q^{a+2i})(1-q^{a+2l+1})(1-q^{2l-2i+2})}.
\end{align*}
This reduces to 
\begin{equation} \label{reduction}
(1-q^{2+a+2l})(1-q^{1-2i+2l}) = (1-q^{1+a+2l})(1-q^{2-2i+2l})+q^{2-2i+2l}(1-q^{-1})(1-q^{a+2i})
\end{equation}
which is an instance of Lemma \ref{2 factors}.

When $l+1=i$ the equation between the coefficients is 

\begin{align*}
&(-1)^{l+1} q^{(c-l-1)(c-l-2)+2(l+1-i)(1-i)+2c(l+1-i)} \frac{1}{\prod_{j=1}^{l+1} (1-q^{2j})^2} F_{l+1-i}(-1)\\
\times & \prod_{j=0}^{l} \frac{(1-q^{a+2j})}{(1-q^{a+2j+1})}\prod_{k=1}^i \frac{(1-q^{2k+2(l+1-i)})^2}{(1-q^{2k})^2}\\ 
=&(-1)^i q^{(c-i)(c-i-1)} \frac{1}{\prod_{j=1}^i (1-q^{2j})^2} \frac{(1-q^a)}{(1-q^{a+2i})} \prod_{j=1}^{l+1-i}\frac{(1-q^{2j-1})}{(1-q^{2j})}\prod_{j=1}^{l+1}\frac{(1-q^{a+2j})}{(1-q^{a+2j-1})}.
\end{align*}

This is implied by the following equation: when $l+1=i$
\[
F_{l+1-i}(-1) \frac{\prod_{k=1}^i (1-q^{2k+2l-2i+2})^2}{\prod_{j=1}^{l+1}(1-q^{2j})^2} =(-1)^{l+i-1}q^{(l+1-i)(l+2-i)} \frac{(1-q^{-1})\prod_{j=1}^{l-i} (1-q^{2j-1})}{(1-q^{2l-2i+2})\prod_{j=1}^{l-i} (1-q^{2j})}
\]
also reduces to \eqref{reduction} for $l+1=i$. This completes the proof.
\end{proof}
We use Theorem \ref{second P factoring} to evaluate $G(c,a,l,q)$ at $c=l$:
\begin{theorem} \label{evaluate G}
\[
G(l,a,l,q) = (-1)^l q^{l(l-1)}\prod_{j=1}^{l}\frac{(1-q^{2j-1})(q^{2j}-q^a)}{(1-q^{2j})(1-q^{a+2j-1})}.
\]
\end{theorem}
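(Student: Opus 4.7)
The plan is to apply Theorem \ref{second P factoring} at $c = l$ and prove the resulting identity by polynomial interpolation in $x = q^a$. Multiplying both sides of the claim by $\prod_{j=1}^l (1 - q^{a+2j-1})$ clears the $a$-dependent denominators. The right side becomes
\[
(-1)^l q^{l(l-1)} \prod_{j=1}^l \frac{1-q^{2j-1}}{1-q^{2j}} \prod_{j=1}^l (q^{2j} - x),
\]
a polynomial of degree $l$ in $x$.

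On the left side, the key elementary identity (valid for every $0 \leq i \leq l$)
\[
\frac{(1-x)\prod_{j=1}^l (1 - xq^{2j})}{1 - xq^{2i}} = \prod_{\substack{k=0 \\ k \neq i}}^l (1 - xq^{2k})
\]
converts the cleared sum from Theorem \ref{second P factoring} into $\sum_{i=0}^l C_i \prod_{k \neq i}(1 - xq^{2k})$, where
\[
C_i = (-1)^i q^{(l-i)(l-i-1)} \frac{P_i(l,q)}{\prod_{j=1}^i (1-q^{2j})^2}\prod_{j=1}^{l-i}\frac{1-q^{2j-1}}{1-q^{2j}}
\]
depends only on $l$ and $q$. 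This is also a polynomial of degree at most $l$ in $x$.

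Since two polynomials of degree at most $l$ that agree at $l+1$ distinct points are equal, it suffices to check the identity at $x = q^{-2m}$ for $m = 0, 1, \ldots, l$. At such a value the factor $\prod_{k \neq i}(1 - xq^{2k})$ vanishes whenever $i \neq m$ (it contains the index $k = m$, giving $1 - q^{0} = 0$), so only the $i = m$ term survives on the left. Using the explicit form $P_m(l,q) = \prod_{j=1}^m (1-q^{2l+2j})(1-q^{2l-2j+1})$ together with the reindexings $\prod_{j=1}^m (1-q^{2l-2j+1}) = \prod_{k=l-m+1}^l (1-q^{2k-1})$ and $\prod_{j=1}^m (1-q^{2l+2j}) = \prod_{k=l+1}^{l+m}(1-q^{2k})$, the surviving term collapses: the odd-power factors combine with $\prod_{j=1}^{l-m}(1-q^{2j-1})$ to form $\prod_{k=1}^l (1-q^{2k-1})$, and the even-power factors merge into $\prod_{k=m+1}^{l+m}(1-q^{2k})$, which is exactly the right side at $x = q^{-2m}$.

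The main obstacle is purely bookkeeping: one verifies the exponent match $(l-m)(l-m-1) - m(m+1) = l(l-1) - 2ml$ and tracks the sign, noting that rewriting $(1 - q^{-2n}) = -q^{-2n}(1-q^{2n})$ contributes $(-1)^m q^{-m(m+1)} \prod_{n=1}^m (1-q^{2n})$ which combines with the $(-1)^m$ from $C_m$ and the $(-1)^l$ from $\prod_{j=1}^l (q^{2j} - q^{-2m})$ to yield the target sign $(-1)^l$. No deeper identity is required beyond the polynomial interpolation setup.
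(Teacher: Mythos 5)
Your proposal is correct and follows essentially the same route as the paper: specialize Theorem \ref{second P factoring} at $c=l$, clear the denominator $\prod_{j=1}^l(1-q^{a+2j-1})$, observe both sides are polynomials of degree at most $l$ in $q^a$, and verify agreement at the $l+1$ interpolation points $q^a=q^{-2m}$ where the sum collapses to its $i=m$ term. The bookkeeping you outline (including the exponent identity $(l-m)(l-m-1)-m(m+1)=l(l-1)-2ml$) matches the paper's own computation.
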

\begin{proof}
Theorem \ref{second P factoring} expresses $G(c,a,l,q)$ as a function of $a$ using the Lagrange interpolation form of a polynomial. That is, for $0 \leq i \leq l$ and $a=-2i$, each term in the sum is 0 except for the $i$-th term. Therefore we can easily evaluate $G(c,-2i,l,q)$ as a factored expression. After multiplying both sides of this theorem statement by 
\[
\prod_{j=1}^l (1-q^{a+2j-1}),
\]   
both sides are polynomials in $q^a$ of degree at most $l$. Therefore if they agree at $a=-2i$ for $0 \leq i \leq l$, then they are equal as functions of $a$. 
We get 
\begin{align*}
&\prod_{j=1}^l (1-q^{2j-2i-1})G(l,-2i,l,q) \\
=& (-1)^i q^{(l-i)(l-i-1)}\frac{P_i(l,q)}{\prod_{j=1}^i (1-q^{2j})^2}(\prod_{j=1}^{l-i}\frac{1-q^{2j-1}}{1-q^{2j}})  \prod_{j=0}^{i-1}(1-q^{-2i+2j}) \prod_{j=i+1}^{l}(1-q^{-2i+2j}).
\end{align*}
This simplifies to 
\begin{align*}
&q^{(l-i)(l-i-1)-i(i+1)}\frac{\prod_{j=1}^i (1-q^{2j}) \prod_{j=1}^i (1-q^{2l+2j}) \prod_{j=1}^{l-i} (1-q^{2j})\prod_{j=1}^l (1-q^{2j-1})}{\prod_{j=1}^i (1-q^{2j})^2  \prod_{j=1}^{l-i} (1-q^{2j})}\\ 
&= q^{(l)(l-1)-2il} \frac{ \prod_{j=1}^l (1-q^{2j-1}) \prod_{j=1}^i (1-q^{2l+2j}) }{\prod_{j=1}^i (1-q^{2j}) }.
\end{align*}
And 
\[
 (-1)^l q^{l(l-1)}\prod_{j=1}^{l}\frac{(1-q^{2j-1})(q^{2j}-q^{-2i})}{(1-q^{2j})}
\]
simplifies to the same thing. This completes the proof.
\end{proof}

Now we can prove Theorem \ref{q agm functional equation identity} in the case when $m$ is even: 
\begin{proof}
 Let $m=2l$. Combining Theorems \ref{first P factoring}, \ref{second P factoring}, and \ref{evaluate G}, we evaluate $c=l$ and $a=2l+1$ to get 
 \begin{align*}
& \sum_{n=0}^{2l} (-1)^n q^{\frac{n(n+1)}{2}-2nl} (\prod_{j=1}^n \frac{1-q^{2j-1}}{1-q^{2j}})^2 \prod_{j=1}^n(1+q^{j})^2{ 2l+n \choose 2n}_q\\ 
=&q^{l-l^2}\left((-1)^l q^{-l^2} \frac{P_{L}(l,q)}{\prod_{j=1}^{2l} (1-q^j)} \right)\left((-1)^l q^{l(l-1)}\prod_{j=1}^{l}\frac{(1-q^{2j-1})(q^{2j}-q^{2l+1})}{(1-q^{2j})(1-q^{2l+2j})}\right).
 \end{align*}
 This simplifies to 
 \[
 q^{l}\frac{\prod_{j=1}^l (1-q^{2j-1})^3 (1-q^{2l+2j})}{\prod_{j=1}^l (1-q^{2j})^2 (1-q^{2j-1}) (1-q^{2l+2j})} = q^l \prod_{j=1}^l(\frac{ 1-q^{2j-1}}{ 1-q^{2j} } )^2
 \]
 which completes the proof.
\end{proof} 

We include a result when $q=1$:
\begin{lemma} For integer $m \geq 0$, the following functions of $c$ are all equal:
\[
H(c,m)=\left(\prod_{u=1}^{m} (1+\frac{2c}{2u})(1-\frac{2c}{2u-1}) \right )\sum_{i=0}^\infty(-1)^i {m+\frac{1}{2} \choose i}  \frac{ F_i(c,1)}{F_i(-m-1,1)}
\]
where 
\[
\frac{ F_i(c,1)}{F_i(-m-1,1)} = \prod_{j=1}^i \frac{(2c+2j-1)(2c+2-2j)}{(-2m-3+2j)(-2m-2j)}.
\]
\end{lemma}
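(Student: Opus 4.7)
The claim is that $H(c,m)$ is independent of $m$. The strategy is to recognize the inner sum as a Gauss hypergeometric series ${}_2F_1$ at argument $1$, apply the Gauss summation theorem, and collapse the result using the Legendre duplication formula.

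First I rewrite the ingredients in Pochhammer form. Using $\prod_{j=1}^i(2c+2j-1) = 2^i(c+\frac{1}{2})_i$ and $\prod_{j=1}^i(2c+2-2j) = 2^i(-1)^i(-c)_i$, and analogously for $F_i(-m-1,1)$, we get
\[
F_i(c,1) = 2^{2i}(-1)^i\bigl(c+\tfrac{1}{2}\bigr)_i(-c)_i,\qquad F_i(-m-1,1) = 2^{2i}(-1)^i\bigl(-m-\tfrac{1}{2}\bigr)_i(m+1)_i.
\]
Combined with the identity $(-1)^i{m+\frac{1}{2} \choose i} = (-m-\frac{1}{2})_i/i!$, the Pochhammer $(-m-\frac{1}{2})_i$ cancels and the sum collapses to
\[
\sum_{i=0}^\infty \frac{(c+\frac{1}{2})_i(-c)_i}{(m+1)_i\,i!} \;=\; {}_2F_1\!\bigl(c+\tfrac{1}{2},\,-c;\,m+1;\,1\bigr).
\]
Since $(m+1)-(c+\frac{1}{2})-(-c) = m+\frac{1}{2} > 0$, Gauss's summation theorem evaluates this to $\Gamma(m+1)\Gamma(m+\frac{1}{2})\big/\bigl(\Gamma(m+\frac{1}{2}-c)\,\Gamma(m+1+c)\bigr)$.

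In parallel, I convert the prefactor to Gamma functions. Using $\prod_{u=1}^m(u+c)/u = \Gamma(m+1+c)/(\Gamma(1+c)\,m!)$ together with $\prod_{u=1}^m(2u-1-2c)/(2u-1) = 2^{2m}\,m!\,\Gamma(m+\frac{1}{2}-c)\big/\bigl((2m)!\,\Gamma(\frac{1}{2}-c)\bigr)$, I obtain
\[
\prod_{u=1}^m\Bigl(1+\frac{2c}{2u}\Bigr)\Bigl(1-\frac{2c}{2u-1}\Bigr) \;=\; \frac{2^{2m}\,\Gamma(m+1+c)\,\Gamma(m+\tfrac{1}{2}-c)}{(2m)!\,\Gamma(1+c)\,\Gamma(\tfrac{1}{2}-c)}.
\]
Multiplying this by the Gauss evaluation, the $c$-dependent factors $\Gamma(m+1+c)$ and $\Gamma(m+\frac{1}{2}-c)$ cancel, leaving
\[
H(c,m) \;=\; \frac{2^{2m}\,m!\,\Gamma(m+\tfrac{1}{2})}{(2m)!\,\Gamma(1+c)\,\Gamma(\tfrac{1}{2}-c)}.
\]
The Legendre duplication formula $(2m)! = \frac{2^{2m}}{\sqrt{\pi}}\,m!\,\Gamma(m+\frac{1}{2})$ then reduces the $m$-dependent factor to $\sqrt{\pi}$, giving $H(c,m) = \sqrt{\pi}\big/\bigl(\Gamma(1+c)\,\Gamma(\frac{1}{2}-c)\bigr)$, independent of $m$.

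The main obstacle is spotting the Pochhammer structure hidden in the pairwise products defining $F_i$ and carefully tracking signs and index shifts; once the inner sum is identified as a ${}_2F_1$ at $1$, Gauss's summation theorem and the duplication formula finish the argument by routine Gamma-function bookkeeping.
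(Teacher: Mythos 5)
Your proof is correct, but it follows a genuinely different route from the paper's. The paper never produces a closed form for $H(c,m)$: it proves only the step $H(c,m)=H(c,m+1)$, by deriving from the elementary factorization $(2c-1+2i)(2c+2-2i)=(2c+2m+2)(2c-1-2m)+(-2m-3+2i)(-2m-2i)$ a contiguous relation expressing $\frac{F_i(c,1)}{F_i(-m-1,1)}$ in terms of the level-$(m+1)$ summands, then telescoping and killing the error terms via absolute convergence and the fact that $W(N)=\prod_{j=1}^N\frac{2j-2m-1}{2j}\to 0$. You instead identify the inner sum as ${}_2F_1\bigl(c+\tfrac12,-c;m+1;1\bigr)$ and invoke Gauss's summation theorem; your Pochhammer conversions are right, the cancellation of $(-m-\tfrac12)_i$ is legitimate since that factor never vanishes, the convergence hypothesis $m+\tfrac12>0$ is satisfied, and the Gamma bookkeeping and duplication step check out. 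Your approach buys strictly more: the closed form $H(c,m)=\sqrt{\pi}\big/\bigl(\Gamma(1+c)\Gamma(\tfrac12-c)\bigr)$, which also yields the paper's subsequent corollary (the evaluation of $\lim_{m\to\infty}H(c,m)$ as an infinite product) with essentially no extra work. What the paper's longer argument buys is self-containedness and an identity-plus-telescoping structure that mirrors the $q$-manipulations used throughout and is the form most likely to lift to a $q$-analogue (the natural $q$-version of your argument would route through the $q$-Gauss sum). Both proofs are valid; yours is shorter and more informative, the paper's is more elementary and more in the spirit of its program.
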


\begin{proof}
We prove that $H(c,m) = H(c,m+1)$ by showing 
\begin{equation} \label{m+1 step}
\sum_{i=0}^\infty(-1)^i {m+\frac{1}{2} \choose i}  \frac{ F_i(c,1)}{F_i(-m-1,1)} = (1+\frac{2c}{2m+2})(1-\frac{2c}{2m+1}) \sum_{i=0}^\infty(-1)^i {(m+1)+\frac{1}{2} \choose i}  \frac{ F_i(c,1)}{F_i(-(m+1)-1,1)}.
\end{equation}
The sum in the lemma for fixed $m$ and $c$ is absolutely convergent, 
 as the product 
 \[
 \prod_{j=1}^i \frac{(2c+2j-1)(2c+2-2j)}{(-2m-3+2j)(-2m-2j)}
 \]
is convergent as $i \rightarrow \infty$ and 
\[
|{x\choose i} | \leq  \frac{C_x}{i^{x+1}}
\]
as $i \rightarrow \infty$ where $C_x$ is a constant that depends on $x$. We have
\[
\sum_{i=0}^N (-1)^i {m+\frac{1}{2} \choose i} =  \prod_{j=1}^N \frac{2j-2m-1}{2j}.
\]
This follows from 
\[
\sum_{i=0}^N (-1)^i q^{i(i+1)-i(2m+1)} {m+\frac{1}{2} \choose i}_{q^2} =  \prod_{j=1}^N \frac{1-q^{2j-2m-1}}{1-q^{2j}}
\]
which can proved by induction. We denote 
\[
W(N) = \prod_{j=1}^N \frac{2j-2m-1}{2j}.
\]
From Corollary \ref{f to p}, we have 
\[
(2c-1+2i)(2c+2-2i)=(2c+2m+2)(2c-1-2m)+(-2m-3+2i)(-2m-2i).
\]
This implies 
\[
\frac{ F_i(c,1)}{F_i(-m-1,1)} = 1-(1+\frac{2c}{2m+2})(1-\frac{2c}{2m+1})  \sum_{j=0}^{i-1} \frac{(-2m-3)(-2m-1)}{(-2m+2j-3)(-2m+2j-1)}\frac{ F_j(c,1)}{F_j(-m-2,1)}.
\]
Using 
\[
\frac{(-2m-3)(-2m-1)}{(-2m+2j-3)(-2m+2j-1)} \prod_{i=1}^j \frac{2i-2m-1}{2i} = (-1)^j {(m+1) +\frac{1}{2} \choose j}
\]
we obtain for any $N>0$
\begin{align}
&\sum_{i=0}^\infty  (-1)^i {m+\frac{1}{2} \choose i}  \frac{ F_i(c,1)}{F_i(-m-1,1)}\nonumber \\
= & (1+\frac{2c}{2m+2})(1-\frac{2c}{2m+1})\sum_{i=0}^{N-1} (-1)^i {(m+1)+\frac{1}{2} \choose i}  \frac{ F_i(c,1)}{F_i(-(m+1)-1,1)} \label{m+1 function}  \\
 +&\sum_{i=N+1}^\infty (-1)^i {(m+1)+\frac{1}{2} \choose i}  \frac{ F_i(c,1)}{F_i(-(m+1)-1,1)}  \label{tail} \\ 
 +&W(N)\left(1- (1+\frac{2c}{2m+2})(1-\frac{2c}{2m+1})\right)\sum_{i=0}^{N-1} \frac{(-2m-3)(-2m-1)}{(-2m+2i-3)(-2m+2i-1)} \frac{ F_i(c,1)}{F_i(-m-1,1)} \label{W}. 
\end{align}
Now as $N\rightarrow\infty$,   the expression \eqref{m+1 function} goes to the right side of \eqref{m+1 step}; expression \eqref{tail} goes to 0; and expression \eqref{W} goes to 0 because the sum is convergent and $ \displaystyle \lim_{N\rightarrow\infty} W(N) =0$.

\end{proof}
We note that this Corollary is sufficient to prove Identity 2 for $q=1$.
\begin{corollary}
\[
I(c,\infty,1)=\sum_{n=0}^{\infty} (-1)^n  (\prod_{j=1}^n \frac{2j-1}{2j})^2 2^{2n}{ n+2c\choose 2n} = \prod_{u=1}^{\infty} (1+\frac{2c}{2u})^2(1-\frac{2c}{2u-1})^2 
\]
\end{corollary}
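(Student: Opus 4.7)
The plan is to use the Lemma to identify the right-hand side with $H(c,\infty)^2$, and then to combine Theorems \ref{first P factoring} and \ref{second P factoring} at $q=1$ to match $I(c,\infty,1)$ to this value. The Lemma asserts that $H(c,m)$ is constant in $m\geq 0$. Sending $m\to\infty$ in the sum defining $H(c,m)$, each term with $i\geq 1$ decays like $m^{-i}$ (since $\binom{m+1/2}{i}=O(m^i)$ while $F_i(c,1)/F_i(-m-1,1)=O(m^{-2i})$), so only the $i=0$ contribution survives and
\[
H(c,m)=\prod_{u=1}^\infty \Big(1+\frac{c}{u}\Big)\Big(1-\frac{2c}{2u-1}\Big) \quad \text{for every } m\geq 0.
\]
The right side of the Corollary is therefore $H(c,\infty)^2$.

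Next I would apply Theorem \ref{first P factoring} (the even case) at $q=1$. Using $1-q^a\sim a(1-q)$ as $q\to 1$, the prefactor $P_l(c,q)/\prod_{j=1}^{2l}(1-q^j)$ limits to $\prod_{j=1}^l (2c+2j)(2c-2j+1)/(2l)!$, which after regrouping equals $(-1)^l\prod_{u=1}^l (1+c/u)(1-2c/(2u-1))$, i.e.\ $(-1)^l$ times the $l$-th partial product of $H(c,\infty)$. The factor $q^{l^2-2lc-c^2+c}$ trivializes, giving
\[
I(c,2l,1) = \Big(\prod_{u=1}^l \big(1+\tfrac{c}{u}\big)\big(1-\tfrac{2c}{2u-1}\big)\Big)\cdot G(c,2l+1,l,1),
\]
so that $I(c,\infty,1) = H(c,\infty)\cdot \lim_{l\to\infty} G(c,2l+1,l,1)$.

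It remains to show $\lim_{l\to\infty} G(c,2l+1,l,1)=H(c,\infty)$. Applying Theorem \ref{second P factoring} at $q=1$ and $a=2l+1$ expresses $G(c,2l+1,l,1)$ as an explicit finite sum in the factors $(2c+2j)(2c-2j+1)$ together with rational functions of $l$ such as $(2l+1)/(2l+2i+1)$, $\prod_{j=1}^{l-i}(2j-1)/(2j)$, and $\prod_{j=1}^l (2l+2j+1)/(2l+2j)$. The structural resemblance to the Lemma's expression for $H(c,l)$ suggests that after a half-shift $c\mapsto c\pm 1/2$ (which exchanges even-odd with odd-even Pochhammer factors) and an appropriate reindexing, the two sums differ only by corrections that vanish as $l\to\infty$. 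Invoking the Lemma ($H(c,l)=H(c,\infty)$) then yields the desired limit, and combining with the previous step gives
\[
I(c,\infty,1) = H(c,\infty)\cdot H(c,\infty) = \prod_{u=1}^\infty \Big(1+\frac{2c}{2u}\Big)^2\Big(1-\frac{2c}{2u-1}\Big)^2.
\]

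The main obstacle is the last step. The summand of $G(c,2l+1,l,1)$ comes from $P_i(c,q)$ and so involves the even-odd factors $(2c+2j)(2c-2j+1)$, whereas the Lemma's summand comes from $F_i(c,q)$ and involves the odd-even factors $(2c+2j-1)(2c-2j+2)$. Bridging this mismatch, either by a direct algebraic identity between the two series (using a half-shift in $c$ and a telescoping along the lines of Corollary \ref{f to p}) or by a careful asymptotic analysis of the finite-$l$ correction terms, is where the bulk of the technical work will lie.
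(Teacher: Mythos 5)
Your first two steps are sound and coincide with the paper's own argument: the Lemma together with the $m\to\infty$ limit identifies $H(c,0)$ with $\prod_{u\geq 1}(1+\frac{2c}{2u})(1-\frac{2c}{2u-1})$, and at $q=1$ the prefactor $(-1)^l q^{l^2-2lc}P_l(c,q)/\prod_{j=1}^{2l}(1-q^j)$ in Theorem \ref{first P factoring} becomes the $l$-th partial product of that same infinite product. The gap is exactly the step you flag as the ``main obstacle,'' and it is a genuine gap: you offer no argument to bridge the $P_i$-versus-$F_i$ mismatch, so the proof is incomplete as written. But the mismatch is an artifact of your route: it is the detour through Theorem \ref{second P factoring} that converts the $F_i(c,q)$ into $P_i(c,q)$ and creates the problem in the first place.

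The paper never leaves the $F_i$ form. Evaluate the inner sum of Theorem \ref{first P factoring} directly at $q=1$: the factor $F_i(c,q)/\prod_{j=1}^i(1-q^{2j})^2$ tends to $\prod_{j=1}^i\frac{(2c+2j-1)(2c-2j+2)}{(2j)^2}={\frac{1}{2} \choose i}\frac{F_i(c,1)}{F_i(-1,1)}$, while $\prod_{j=0}^{i-1}(1-q^{2l+2j+1})^2/\prod_{j=0}^{2i-1}(1-q^{2l+j+1})$ tends to $A(i,l)=\prod_{j=1}^i\frac{2l+2j-1}{2l+2j}$. Hence
\[
I(c,2l,1)=\Bigl(\prod_{u=1}^l(1+\tfrac{2c}{2u})(1-\tfrac{2c}{2u-1})\Bigr)\sum_{i=0}^l(-1)^iA(i,l){\tfrac{1}{2} \choose i}\frac{F_i(c,1)}{F_i(-1,1)},
\]
which is a partial sum of the series defining $H(c,0)$ decorated with factors $A(i,l)\in(0,1]$ that increase to $1$ as $l\to\infty$. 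The absolute convergence established in the proof of the Lemma justifies the termwise passage to the limit, so $I(c,\infty,1)=\bigl(\prod_{u=1}^\infty(1+\frac{2c}{2u})(1-\frac{2c}{2u-1})\bigr)H(c,0)$, and the Lemma converts $H(c,0)$ into that same infinite product, giving the square. If you keep your outline, simply delete the appeal to Theorem \ref{second P factoring}; no half-shift in $c$ and no telescoping via Corollary \ref{f to p} is needed.
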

\begin{proof}
By Theorem \ref{first P factoring}, we have 

\[
I(c,2l,1) = \prod_{u=1}^{l} (1+\frac{2c}{2u})(1-\frac{2c}{2u-1}) \sum_{i=0}^l (-1)^i A(i,l){\frac{1}{2} \choose i}  \frac{ F_i(c,1)}{F_i(-1,1)}
\]
where 
\[
A(i,l) = \prod_{j=1}^i \frac{2l+2j-1}{2l+2j}.
\]
For each $i>0$,  $A(i,l)$ is an increasing function in $l$ and approaches 1 as $l\rightarrow \infty$. By the absolute convergence mentioned in the lemma for $H(c,0)$, we have that 
\[
\lim_{l\rightarrow \infty} I(c,2l,1) = \left(\prod_{u=1}^{\infty} (1+\frac{2c}{2u})(1-\frac{2c}{2u-1}) \right)H(c,0).
\]  
And
\[
\lim_{m \rightarrow \infty } H(c,m) = \prod_{u=1}^{\infty} (1+\frac{2c}{2u})(1-\frac{2c}{2u-1}) 
\]
because in the sum in the lemma, the term for $i>0$
\[
{m+\frac{1}{2} \choose i}  \frac{ F_i(c,1)}{F_i(-m-1,1)} = \prod_{j=1}^i \frac{(2c+2j-1)(2c-2j+2)}{(2j)(2m+2j)}
\]
is decreasing in magnitude to 0 for fixed $c$ as $m \rightarrow \infty$ and remains the constant 1 if $i=0$. By the absolute convergence of the sum the limit is therefore 1. 

\end{proof}

\subsection{Trying to Reconcile Identities 1 and 2}

Recall that the functional equation for the arithmetic-geometric mean is equivalent to
\[
\sum_{n=0}^\infty a_n {k\choose 2n}=\sum_{n'=0}^\infty a_{n'} 2^{2n'}(-1)^{k-n'} {n' \choose k-n'}. 
\]
for each integer $k \geq0$. We set 
\[
a_n(q) = \prod_{j=1}^n \frac{1-q^{2j-1}}{1-q^{2j}}
\]
and therefore write 
\begin{equation} \label{q agm binomial k}
\sum_{n=0}^\infty a_n(q) q^{f_1(n,k)}{k\choose 2n}_q=\sum_{n'=0}^\infty a_{n'}(q) \prod_{j=1}^n (1+q^j)^2 (-1)^{k-n'} q^{f_2(n',k)} {n' \choose k-n'}_q. 
\end{equation}
where $f_1(n,k)$ and $f_2(n,k)$ are functions  on $\mathbb{N}_0^2$ we will try to determine. To the above equation we apply Identity 1:
for integer $n'$ and integer $k$
\begin{equation} \label{q identity 1 again}
(-1)^{n'-k}q^{\frac{(k-2n')(k-2n'-1)}{2}} { n' \choose k-n'}_q = \sum_{j=0}^{\infty} (-1)^{j}q^{\frac{(j-n')(j-n'-1)}{2}} {2n'+j \choose 2n' }_q {k \choose n'+j}_q.
\end{equation}
We get 
\begin{align*}
&\sum_{n=0}^\infty a_n(q) q^{f_1(2n,k)}{k\choose 2n}_q\\
=&\sum_{n'=0}^\infty\sum_{j=0}^\infty  a_{n'}(q) \prod_{j=1}^n (1+q^j)^2 (-1)^j {2n'+j \choose 2n'}_q {k \choose n'+j}_q q^{f_2(n',k)-\frac{(k-2n')(k-2n'-1)}{2}+ \frac{(j-n')(j-n'-1)}{2}}.
\end{align*}
Setting $n'+j = 2n$ gives 
\begin{equation} \label{identity even}
a_n(q)q^{f_1(2n,k)} = \sum_{n'=0}^{2n} (-1)^{n'} a_{n'}(q) \prod_{j=1}^n (1+q^j)^2 {2n+n' \choose 2n'}q^{f_2(n',k)-\frac{(k-2n')(k-2n'-1)}{2}+ \frac{(2n-2n')(2n-2n'-1)}{2}}.
\end{equation}
Now with $m=2n$, Identity 2 is
\[
\sum_{n'=0}^{2n} (-1)^{n'} q^{\frac{n'(n'+1)}{2}-2n'n}a_{n'}(q) \prod_{j=1}^{n'}(1+q^{j})^2{ 2n+n' \choose 2n'}_q =   
q^{n} a_n(q).
\]
Therefore we have 
\[
n-f_1(2n,k) +f_2(n',k)-\frac{(k-2n')(k-2n'-1)}{2}+ \frac{(2n-2n')(2n-2n'-1)}{2} = \frac{n'(n'+1)}{2}-2n'n.
\]
For $k=4$, we therefore have a system of nine equations that come from the nine possible values for $(n,n')$ such that $0 \leq n' \leq 2n$ and ${k \choose 2n} \neq 0$: 
\[
(0,0), (1,0), (1,1),(1,2), (2,0),(2,1), (2,2), (2,3), (2,4).
\]
These nine equations are in the eight variables 
\[
f_1(0,4), f_1(2,4), f_1(4,4), f_2(0,4), f_2(1,4), f_2(2,4), f_2(3,4), f_2(4,4)
\] 
and we check that the system has no solution. 

Setting $n'+j = 2n+1$ gives 
\begin{equation} \label{identity odd}
0 = \sum_{n'=0}^{2n+1} (-1)^{n'} a_{n'}(q) \prod_{j=1}^{n'}(1+q^{j})^2 {2n+1+n' \choose 2n'}_q q^{f_2(n',k)-\frac{(k-2n')(k-2n'-1)}{2}+ \frac{(2n+1-2n')(2n-2n')}{2}}.
\end{equation}
so 
\[
-f_1(2n+1,k)+f_2(n',k)-\frac{(k-2n')(k-2n'-1)}{2}+ \frac{(2n+1-2n')(2n-2n')}{2}= \frac{n'(n'+1)}{2}-n'(2n+1).
\]

Alternatively we can we can start from Identity 2 and set $2n = n' +j$ and see what formula results that corresponds to Identity 1: 
\begin{equation} \label{trial q identity 1}
\sum_{j=0}^{k-n'}(-1)^j q^{-j n'} { 2n'+j \choose 2n'}_q {k \choose n' + j}_q
\end{equation}
The above formula is equal to $\displaystyle (-1)^{n'-k} {n' \choose k-n'} $ at $q=1$, but for other $q$ it in general does not factor and is not equal to $ (-1)^{n'-k} {n' \choose k-n'}_q $ times some power of $q$. However, when $k=n'+1$ we do get 
\[
-q^{-n'} {n' \choose 1} 
\]
which actually follows from  \eqref{q identity 1 again}. That is, what \eqref{trial q identity 1} is missing to make it coincide with \eqref{q identity 1 again} is a factor of $q^{\frac{j(j-1)}{2}}$. Therefore perhaps \eqref{trial q identity 1} can be written as a sum of $q$-binomials, for example, to give another $q$-analogue of Identity 1. 

If we start from Identity 1 again and set $f_2(n,k)$ to be all 0, we get the sum for Identity 2 to be 
\[
\sum_{n'=0}^{2n} (-1)^{n'} q^{\frac{(2n-2n')(2n-2n'-1)}{2}}a_{n'}(q) \prod_{j=1}^{n'}(1+q^{j})^2{ 2n+n' \choose 2n'}_q 
\]
which does not completely factor either. 
 
If we try to bypass Identity 1 and compare the coefficients of $u^k$ directly, we get the identity: 
for each $k\geq 0$ 
\[
\sum_{n=0}^{\lfloor \frac{k}{2 } \rfloor} a_n {k \choose 2n} = \sum_{n=\lceil \frac{k}{2 } \rceil}^k (-1)^{n-k}a_n2^{2n} {n \choose k-n}.
\] 
We attempt a $q$-analogue of the above equation for $k=3$ with 
\[
1+q^a a_1(q) {3 \choose 2}_q = -q^b a_2(q)(1+q)(1+q^2 ){2 \choose 1}_q+ q^c a_3(q) (1+q)(1+q^2)(1+q^3)  
\]
where we have let $2^{2n}$ become $\displaystyle \prod_{j=1}^{n}(1+q^{j})^2$. It can be shown that this equation as a function of $q$ is not true for any real values of $a,b,$ and $c$. The same holds if we try to let $2^{2n}$ become $\displaystyle (1+q)^{2n}$ or just $2^{2n}$. 

\section{$q$-analogues and the Complete Elliptic Integral of the First Kind} \label{q first elliptic}
 Recall 
 \begin{align*}
 F(x) =& \frac{2}{\pi}\int_0^1\frac{dt}{\sqrt{1-t^2}\sqrt{1-xt^2}}\\ 
 =&\sum_{m=0}^\infty (\prod_{j=1}^m \frac{2j-1}{2j})^2 x^m.
 \end{align*}
 Therefore 
 \begin{align}
 \frac{1}{n!}\frac{d^n}{dx^n} F(x) |_{x=\frac{1}{2}} =& {-\frac{1}{2}\choose n} \frac{2}{\pi}\int_0^1\frac{1}{\sqrt{1-t^2}\sqrt{1-\frac{t^2}{2}}}\left( \frac{t^2}{1-\frac{t^2}{2}}\right)^n \, dt \label{integral} \\ 
 =&\sum_{m=0}^\infty (\prod_{j=1}^m \frac{2j-1}{2j})^2 {m \choose n}(\frac{1}{2})^{m-n} \label{sum}
 \end{align}

We present two $q$-analogues of the above formulas. In Section \ref{q sum} titled ``$q$-analogue of the Sum", we give a $q$-analogue of \eqref{sum}, which is actually phrased as a $q$ analogue 
\[
 \frac{1}{n!2^n}\frac{d^n}{dx^n} F(x) |_{x=\frac{1}{2}}=\sum_{m=0}^\infty (\prod_{j=1}^m \frac{2j-1}{2j})^2 {m \choose n}(\frac{1}{2})^{m}.
\]
In Section \ref{q integral}, titled ``$q$-analogue of the Integral", we give a $q$-analogue of \eqref{integral}. Despite the title of Section \ref{q integral}, we are actually giving a $q$-analogue of another sum that is obtained from that integral.   

\subsection{$q$-analogue of the Sum} \label{q sum}
We define a $q$-analogue of the function $1+\sin(\pi s)$ which we will use in Theorem \ref{q sum thm}.

\begin{definition}
\[
(1+\mathrm{SinPi})(s,q^2) = q^{s^2-s}\prod_{n=0}^\infty \frac{(1-q^{4n+3-2s})^2(1-q^{4n+1+2s})^2}{(1-q^{4n+3})^2(1-q^{4n+1})^2}.
\]
\end{definition}

\begin{theorem} 
The function $(1+\mathrm{SinPi})(s,q^2)$ is 2-periodic in $s$ and 
\[
\lim_{q \rightarrow 1^-} (1+\mathrm{SinPi})(s,q^2) = 1+\sin(\pi s).
\]
\end{theorem}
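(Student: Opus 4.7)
My plan addresses the two claims separately. For 2-periodicity, I would substitute $s \mapsto s+2$ directly. The prefactor picks up a factor $q^{(s+2)^2-(s+2)-(s^2-s)} = q^{4s+2}$. In the infinite product, the numerator exponents shift as $4n+3-2s \mapsto 4(n-1)+3-2s$ and $4n+1+2s \mapsto 4(n+1)+1+2s$; re-indexing over $n \geq 0$ shows that the numerator gains the factor $(1-q^{-1-2s})^2$ and loses $(1-q^{1+2s})^2$, while the denominator is unchanged. The elementary identity $1-q^{-1-2s} = -q^{-1-2s}(1-q^{1+2s})$ collapses this ratio to $q^{-4s-2}$, exactly cancelling the prefactor change.

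For the limit, I would pass to $q^4$-Pochhammer notation $(a;q^4)_\infty = \prod_{n=0}^\infty (1-aq^{4n})$ and convert each Pochhammer factor to the $q^4$-Gamma function via the standard identity $(q^{4\alpha};q^4)_\infty = (q^4;q^4)_\infty(1-q^4)^{1-\alpha}/\Gamma_{q^4}(\alpha)$. The four $(q^4;q^4)_\infty$ factors cancel between numerator and denominator, and since $(1+2s)/4 + (3-2s)/4 = 1 = 1/4 + 3/4$, the four accompanying $(1-q^4)$ exponents also sum to zero. What remains is
\[
(1+\mathrm{SinPi})(s,q^2) = q^{s^2-s}\left(\frac{\Gamma_{q^4}(3/4)\,\Gamma_{q^4}(1/4)}{\Gamma_{q^4}((3-2s)/4)\,\Gamma_{q^4}((1+2s)/4)}\right)^2.
\]
Taking $q \to 1^-$, each $\Gamma_{q^4}$ tends to $\Gamma$ by the limit cited in the introduction, and two applications of $\Gamma(x)\Gamma(1-x) = \pi\csc(\pi x)$ (with $x=1/4$ and $x=(3-2s)/4$) yield a limit of $\bigl(\sqrt{2}\sin(\pi(3-2s)/4)\bigr)^2$. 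Finally the identity $2\sin^2\theta = 1-\cos 2\theta$ with $\theta = \pi(3-2s)/4$, together with $\cos(3\pi/2 - \pi s) = -\sin(\pi s)$, converts this to $1+\sin(\pi s)$; the prefactor $q^{s^2-s}$ tends trivially to $1$.

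The step most prone to slip is verifying that the $(1-q^4)$ exponents cancel when converting to $\Gamma_{q^4}$, but this is a one-line bookkeeping check. There is no genuine analytic obstacle: the infinite product converges absolutely for $0<q<1$, and the $\Gamma_{q^4} \to \Gamma$ limit holds uniformly on compact subsets of the $s$-plane avoiding the relevant poles, so the limit of the finite rational expression in $\Gamma_{q^4}$'s is the same rational expression in $\Gamma$'s.
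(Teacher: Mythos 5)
Your proposal is correct, and the periodicity argument is exactly the paper's: the same shift computation producing the factor $q^{4s+2}(1-q^{-1-2s})^2/(1-q^{1+2s})^2$, collapsed by the same identity $1-q^{-1-2s}=-q^{-1-2s}(1-q^{1+2s})$. (The paper's displayed equation writes $s+1$ where it means $s+2$, but the factor it computes is the shift-by-two factor, so you and the paper agree.)

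For the limit, your route is the same decomposition in substance --- rewriting the base-$q^4$ products via the $q^4$-Gamma function and invoking $\Gamma_{q^4}\to\Gamma$ --- but you handle the $s$-independent part differently, and this is worth noting. The paper isolates a constant
\[
C_1(q)=(1-q^4)^2\prod_{n=0}^{\infty}\frac{(1-q^{4n+4})^4}{(1-q^{4n+3})^2(1-q^{4n+1})^2}
\]
and proves $\lim_{q\to1^-}C_1(q)=2\pi^2$ by a separate, self-contained Wallis-type argument (Lemma \ref{q Wallis 4}): each factor is shown to be monotone increasing in $q$ on $(0,1)$, which squeezes the limit between partial products of $\prod (4n)^2/((4n-1)(4n+1))=\pi/\sqrt{8}$. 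You instead absorb this constant into $\bigl(\Gamma_{q^4}(3/4)\Gamma_{q^4}(1/4)\bigr)^2$ and apply the cited $q$-Gamma limit a second time together with the reflection formula at $x=1/4$, which gives $(\pi\sqrt{2})^2=2\pi^2$ without any extra lemma. Both are valid; your version is shorter and more uniform (everything reduces to one cited limit plus reflection), while the paper's Lemma \ref{q Wallis 4} gives an elementary, independent verification of the constant and is presented as a $q$-Wallis product of standalone interest. Your final trigonometric reduction $2\sin^2(\pi(3-2s)/4)=1+\sin(\pi s)$ matches the paper's $2\sin^2(\pi(2s+1)/4)=1+\sin(\pi s)$ since the two angles are supplementary. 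The only caveat worth recording in either approach is that the arguments $(1\pm 2s)/4$ must avoid the poles of $\Gamma$; at the exceptional points $s\in -\tfrac12+2\mathbb{Z}$ both sides vanish, so the identity persists.
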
 

\begin{proof}
The $2$-periodicity follows from
\[
(1+\mathrm{SinPi})(s+1,q^2)= q^{4s+2}\frac{(1-q^{-1-2s})^2}{(1-q^{1+2s})^2}  (1+\mathrm{SinPi})(s,q^2)= (1+\mathrm{SinPi})(s+1,q^2).
\]

Now we prove the limits $q \rightarrow 1^-$. We express 
\begin{align*}
(1+\mathrm{SinPi})(s,q^2)  =& q^{s^2-s}\prod_{n=0}^\infty \frac{(1-q^{4n+3-2s})^2(1-q^{4n+1+2s})^2}{(1-q^{4n+3})^2(1-q^{4n+1})^2} \\ 
=& \frac{1}{(\frac{-1-2s}{4})!_{q^4}^2 (\frac{-3+2s}{4})!_{q^4}^2}  C_1(q)
\end{align*}
where 
\[
C_1(q)=  \frac{(1-q^4)^2}{(1-q)^2}\prod_{n=1}^\infty \frac{(1-q^{4n})^2}{(1-q^{4n-1})^2 (1-q^{4n+1})^2}.
\]

As $q\rightarrow 1^-$, 
\[
\lim_{q\rightarrow 1^-} \frac{1}{(\frac{-1-2s}{4})!_{q^4}^2 (\frac{-3+2s}{4})!_{q^4}^2}  = \frac{1}{(\frac{-1-2s}{4})!^2 (\frac{-3+2s}{4})!^2} 
\]
which is equal to 
\[
\frac{\sin( \frac{\pi(2s+1)}{4})^2}{\pi^2}
\]
where we have used 
\[
(-x)! x! = \frac{\pi x}{\sin(\pi x)}.
\]
And we have
\begin{equation}
\lim_{q\rightarrow 1^-} C_1(q) = 2 \pi^2
\end{equation}
by Lemma \ref{q Wallis 4}.

Finally we have 
\[
2\sin( \frac{\pi(2s+1)}{4})^2= 1+\sin(\pi s)
\]
from standard trigonometric identities.
\end{proof}

Next we prove the limit in the previous lemma. It is a $q$-analogue of a product similar to the Wallis product for $\pi$.
\begin{lemma} \label{q Wallis 4}
\begin{equation} 
\lim_{q\rightarrow 1^-}C_1(q) =\lim_{q\rightarrow 1^-} \frac{(1-q^4)^2}{(1-q)^2}\prod_{n=1}^\infty \frac{(1-q^{4n})^4}{(1-q^{4n-1})^2 (1-q^{4n+1})^2} = 2 \pi^2.
\end{equation}
\end{lemma}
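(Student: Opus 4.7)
The plan is to express $C_1(q)$ in closed form as a product of generalized $q$-factorials with base $q^4$, and then pass to the limit $q\to 1^-$ using the fact, cited in Section \ref{intro}, that $(\alpha)!_q$ converges to $\Gamma(\alpha+1)$ (the natural extension of the integer identity $n!_q \to n!$). Specifically, specializing the definition $(\alpha)!_{q^4} = (1-q^4)^{-\alpha}\prod_{n=1}^\infty (1-q^{4n})/(1-q^{4(n+\alpha)})$ to $\alpha = -1/4$ and $\alpha = 1/4$, the $(1-q^4)^{\pm 1/4}$ prefactors cancel against each other in the product, giving
\[
\left(-\tfrac{1}{4}\right)!_{q^4}\left(\tfrac{1}{4}\right)!_{q^4} \;=\; \prod_{n=1}^\infty \frac{(1-q^{4n})^2}{(1-q^{4n-1})(1-q^{4n+1})}.
\]
Squaring this identity and multiplying by $\bigl((1-q^4)/(1-q)\bigr)^2$ reproduces the infinite product in $C_1(q)$ exactly, so
\[
C_1(q) \;=\; \left(\frac{1-q^4}{1-q}\right)^{\!2}\left[\left(-\tfrac{1}{4}\right)!_{q^4}\left(\tfrac{1}{4}\right)!_{q^4}\right]^{2}.
\]

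Now letting $q\to 1^-$, the prefactor $(1-q^4)/(1-q) = 1+q+q^2+q^3$ tends to $4$, while the two $q$-factorials tend to $\Gamma(3/4)$ and $\Gamma(5/4)$ respectively. Using $\Gamma(5/4) = \tfrac{1}{4}\Gamma(1/4)$ together with the reflection formula $\Gamma(1/4)\Gamma(3/4) = \pi/\sin(\pi/4) = \pi\sqrt{2}$, I obtain $\Gamma(3/4)\Gamma(5/4) = \pi/(2\sqrt{2})$. Combining,
\[
\lim_{q\to 1^-} C_1(q) \;=\; 16\cdot\left(\frac{\pi}{2\sqrt{2}}\right)^{\!2} \;=\; 16\cdot\frac{\pi^2}{8} \;=\; 2\pi^2,
\]
as claimed. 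In words, the Wallis-type sub-product $\prod_{n\ge 1} (4n)^2/[(4n-1)(4n+1)]$ encoded in $C_1$ evaluates to $\pi/(2\sqrt 2)$ via $\Gamma(3/4)\Gamma(5/4)$, and the $(1-q^4)/(1-q)$ prefactor supplies the remaining factor of $16$.

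The main obstacle is purely bookkeeping: correctly shifting the index so that the two $q$-factorials $(-1/4)!_{q^4}$ and $(1/4)!_{q^4}$ produce precisely the exponents $4n-1$ and $4n+1$ appearing in the denominator of $C_1(q)$, and managing the separate $(1-q)^2$ factor that is split off in the prefactor. There is no analytic subtlety, because absolute convergence of the infinite product is already built into the definition of $(\alpha)!_q$, so the interchange of limit and product is immediate from the $q$-Gamma convergence cited in Section \ref{intro}.
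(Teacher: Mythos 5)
Your proof is correct, but it takes a genuinely different route from the paper's. You package the infinite product into the exact identity
\[
C_1(q) \;=\; \Bigl(\tfrac{1-q^4}{1-q}\Bigr)^{2}\,\bigl[(-\tfrac{1}{4})!_{q^4}\,(\tfrac{1}{4})!_{q^4}\bigr]^{2},
\]
valid for each fixed $q\in(0,1)$, and then invoke the cited limit $\lim_{q\to 1^-}\Gamma_q(\alpha)=\Gamma(\alpha)$ together with the reflection formula $\Gamma(1/4)\Gamma(3/4)=\pi\sqrt{2}$ to get $\Gamma(3/4)\Gamma(5/4)=\pi/(2\sqrt{2})$ and hence $16\cdot\pi^2/8=2\pi^2$. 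This sidesteps any interchange of limit and infinite product, since the identity is algebraic and all the analytic content is outsourced to the Gosper--Andrews theorem (which the paper cites in Section \ref{intro} and uses elsewhere, so this is a legitimate dependency). The paper instead argues directly on the partial products: it shows by an elementary computation that each factor $\frac{(1-q^{4n})^2}{(1-q^{4n-1})(1-q^{4n+1})}$ is an increasing function of $q$ on $(0,1)$ with value $1$ at $q=0$, which squeezes the limit between $\prod_{n=1}^{N}\frac{(4n)^2}{(4n-1)(4n+1)}$ and the full Wallis-type product $\prod_{n=1}^{\infty}\frac{(4n)^2}{(4n-1)(4n+1)}=\pi/\sqrt{8}$, the latter evaluated from the Euler sine product at $x=1/4$. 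So the paper's proof is self-contained and justifies the limit--product interchange explicitly, while yours is shorter and cleaner at the cost of leaning on the external $q$-Gamma limit theorem; both evaluate the same underlying constant $\pi/\sqrt{8}$, once via the sine product and once via the reflection formula.
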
 
\begin{proof}
We have  
\[
\frac{\sin(\pi x)}{\pi x} = \prod_{n=1}^\infty (1-\frac{x^2}{n^2})
\]
for all $x$. 
Setting $x = \frac{1}{4}$ and then taking the reciprocal gives 
\begin{equation} 
\prod_{n=1}^\infty \frac{(4n)^2}{(4n-1)(4n+1)} = \frac{\pi}{\sqrt{8}}.
\end{equation}
Let $L$ denote the limit 
\[
L = \lim_{q\rightarrow 1^-} \prod_{n=1}^\infty \frac{(1-q^{4n})^2}{(1-q^{4n-1}) (1-q^{4n+1})}. 
\]
We claim that $L= \frac{\pi}{\sqrt{8}}$. We claim that for each integer $n \geq 1$
\[
f(q,n)=\frac{(1-q^{n})^2}{(1-q^{n-1}) (1-q^{n+1})} 
\]
is an increasing function of $q$ for $q \in (0,1)$. That $\frac{\partial}{\partial q} f(q,n)\geq 0$ for $q \in (0,1)$ is equivalent to 
\[
(n-1)\frac{(1-q^{n+1})}{1-q} - q(n+1)\frac{(1-q^{n-1})}{1-q} \geq 0 
\] 
for $q \in (0,1)$. The above expression is equal to 
\[
(n-1)(1 + q^{n}) -2 \sum_{j=1}^{n-1}q^j = \sum_{j=1}^{n-1} (1-q^j)(1-q^{n-j}) \geq 0
\]
for $q \in (0,1)$, where we have used 
\[
\frac{(1-q^{N})}{1-q} = \sum_{j=0}^{N-1} q^j.
\]
Since 
\[
f(0,4n) =1 \,\,\,\,\,\mathrm{ and } \,\,\,\,\,f(1,n) =  \frac{(4n)^2}{(4n-1)(4n+1)},
\]
we can bound the limit $L$ between 
\begin{equation} 
\prod_{n=1}^N  \frac{(4n)^2}{(4n-1)(4n+1)} \leq L \leq  \prod_{n=1}^\infty  \frac{(4n)^2}{(4n-1)(4n+1)} = \frac{\pi}{\sqrt{8}} 
\end{equation}
for any $N$. 
This completes the proof.
\end{proof} 

Now we can prove the $q$-analogue of \eqref{sum}.

\begin{theorem} \label{q sum thm}
For any $s \in \mathbb{C}$ and $q \in (0,1]$,  

 \begin{align*}
 &\sum_{n=0}^\infty q^{(s-n)(s-n-1)}{n \choose s}_{q^2}(\prod_{j=1}^n \frac{1-q^{2j-1}}{1-q^{2j}})^2 \frac{1}{\prod_{j=1}^n(1+q^{2j})} \\ 
 =&  \, \, q^{s^2-s}\prod_{n=0}^\infty \frac{(1-q^{4n+3-2s})^2}{(1-q^{4n+2})(1-q^{4n+4})} \frac{(1-q^{2s+2n+2})}{(1-q^{2n+2})}\\ 
 =& \,\,C_2(q)(1+\mathrm{SinPi})(s,q^2) {s-\frac{1}{2} \choose s}_{q^2} \frac{\Gamma(\frac{2s+1}{4})_{q^4}}{\Gamma(\frac{2s+3}{4})_{q^4}}
\end{align*}
where 
\[
C_2(q) = (1-q^4)^{-\frac{1}{2}} \prod_{n=0}^\infty \frac{1-q^{2n+1}}{1-q^{2n+2}}
\]
and $C_2(1) = \frac{1}{\sqrt{2\pi}}$. 
\end{theorem}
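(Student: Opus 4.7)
The theorem asserts a chain of two equalities: the series equals the middle infinite product, which equals the right-hand closed form. I would prove the two equalities separately.

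\textbf{Second equality (middle product = closed form).} This is algebraic bookkeeping. Expanding the definitions of $(1+\mathrm{SinPi})(s,q^2)$, $C_2(q)$, $\binom{s-\frac{1}{2}}{s}_{q^2}$, and the ratio $\Gamma_{q^4}(\frac{2s+1}{4})/\Gamma_{q^4}(\frac{2s+3}{4})$ as infinite products, one obtains a single infinite product on the right. The two $(1-q^4)^{\pm 1/2}$ factors cancel. Collecting the remaining factors $(1-q^m)$ by the residue of $m$ modulo $4$, the $s$-dependent exponents on both sides reduce to $\prod_{n\geq 0}(1-q^{2s+4n+1})^2(1-q^{2s+4n+3})$, and the $s$-independent factors both collapse to $\prod_{m\geq 1}(1-q^m)^2$. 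The $q^{s^2-s}$ prefactor cancels with the same factor appearing inside $(1+\mathrm{SinPi})(s,q^2)$. This step is a $q$-analog of the classical duplication $\Gamma(z)\Gamma(z+\tfrac{1}{2})\propto\Gamma(2z)$ relating bases $q^2$ and $q^4$.

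\textbf{First equality (series = middle product).} This is the substantive step. Let $L(s,q)$ denote the series and $R(s,q)$ the middle product. Rewriting the summand using $\binom{n}{s}_{q^2}=(q^2;q^2)_n/\bigl(\Gamma_{q^2}(s+1)\Gamma_{q^2}(1-s)(q^{2-2s};q^2)_n\bigr)$, the factorization $(q^4;q^4)_n=(q^2;q^2)_n(-q^2;q^2)_n$, and the splitting $q^{(s-n)(s-n-1)}=q^{s^2-s}\,q^{n(n-1)}\,(q^{2-2s})^n$, one obtains
\[
L(s,q) = \frac{q^{s^2-s}}{\Gamma_{q^2}(s+1)\,\Gamma_{q^2}(1-s)}\;{}_2\phi_2\!\bigl(q,\,q;\;q^{2-2s},\,-q^2;\;q^2,\;-q^{2-2s}\bigr).
\]
The task reduces to evaluating this ${}_2\phi_2$ in closed form. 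I would pursue two strategies. First, \emph{direct summation}: identify the ${}_2\phi_2$ as a specialization of a known Bailey- or $q$-Kummer/$q$-Dixon-type identity, producing the infinite-product form of $R(s,q)$ directly. Second, \emph{induction on $s\in\mathbb{Z}_{\geq 0}$}: apply the generalized $q$-Pascal identity
\[
\binom{n+1}{s}_{q^2} = q^{2s}\binom{n}{s}_{q^2} + \binom{n}{s-1}_{q^2},
\]
which holds for complex $s$ (a direct check, since $(1-q^{2n+2}) = q^{2s}(1-q^{2n+2-2s}) + (1-q^{2s})$), inside the summation to derive a recursion for $L(s,q)$, and match it against the recursion for $R(s,q)$ obtained by shifting $s$ inside the product. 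A base case $s=0$ should reduce to a classical $\theta$-type evaluation, likely a specialization of the Jacobi triple product. Once the identity holds for non-negative integer $s$, analytic continuation in $q^s$ (both sides being analytic in $q^s$ for fixed $q\in(0,1)$) extends it to all complex $s$. Finally, $C_2(1)=1/\sqrt{2\pi}$ follows by rewriting $C_2(q)=(1-q^4)^{-1/2}\prod_n(1-q^{2n+1})/(1-q^{2n+2})$ and taking $q\to 1^-$ via the Wallis product, exactly as in Lemma~\ref{q Wallis 4}.

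\textbf{Main obstacle.} The ${}_2\phi_2$ evaluation is the crux. The parameters $(q,q;\,q^{2-2s},-q^2)$ with argument $-q^{2-2s}$ are nonstandard; if no textbook summation applies directly, the inductive route requires careful bookkeeping, since the weight $q^{(s-n)(s-n-1)}$ is quadratic in both $s$ and $n$, so the $q$-Pascal split produces several cross terms that must be recombined (effectively, a creative-telescoping certificate) so that summation against the tail of $L$ telescopes cleanly and matches the shift of $R$.
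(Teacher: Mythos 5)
Your reduction of the series to a ${}_2\phi_2$ and your handling of the second equality and of $C_2(1)$ are fine, but the way you close the first equality has a genuine gap. You prove the identity for $s\in\mathbb{Z}_{\geq 0}$ and then invoke ``analytic continuation in $q^s$.'' Both sides are indeed analytic in $y=q^{2s}$ on $\mathbb{C}^{*}$ (equivalently in $x=q^{-2s}$), but the sample points $y=1,q^2,q^4,\dots$ accumulate only at $y=0$, which is a boundary point of the domain of analyticity: the difference of the two sides can vanish on such a sequence without vanishing identically, and the growth of the two sides as $|x|\to\infty$ (of order $e^{c(\log|x|)^{2}}$, i.e.\ $e^{c'(\operatorname{Re} s)^{2}}$) is far too fast for a Carlson-type rigidity theorem to rescue the argument. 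A second, more concrete obstruction: the product $\prod_{n\ge 0}(1-q^{4n+3-2s})^{2}$ lives in a single residue class mod $4$, so shifting $s$ by $1$ (which is what one application of $q$-Pascal produces) moves it to the other odd residue class, and the ratio $R(s)/R(s-1)$ is itself an infinite product; only the step $s\mapsto s-2$ has a finite rational coefficient, so ``matching the recursions'' does not close as written. Finally, the theorem includes $q=1$, where the auxiliary products such as $\prod_{j}(1-q^{4j})$ degenerate, and your outline is silent on this case.

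The paper's proof repairs exactly these points. It sets $x=q^{-2s}$, multiplies through by $\prod_{j}(1-q^{4j})$ so that both sides become entire in $x$, and proves the contiguous relation $\tilde f(x,q)=(1-q^{3}x)^{2}\tilde f(q^{4}x,q)$ for \emph{all} $x$ by exhibiting an explicit closed form for the partial sums $\sum_{n=0}^{N}\bigl(\tilde f_{n}(x,q)-(1-q^{3}x)^{2}\tilde f_{n}(q^{4}x,q)\bigr)$ --- precisely the ``creative-telescoping certificate'' you anticipate needing, carried out. Iterating the relation drives $x$ to $0$, so the normalization is the limit $\tilde f(0,q)=\prod_{n}(1-q^{4n+4})$ rather than a value at an integer $s$; no continuation from $\mathbb{Z}_{\geq 0}$ is ever invoked. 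The case $q=1$ is then treated separately by the same scheme ($f(s)=\frac{(3-2s)^{2}}{(2-2s)(4-2s)}f(s-2)$, iterated with $\lim_{N\to\infty}f(s-N)=1$) plus convergence estimates. If you replace your induction-plus-continuation step by a two-step contiguous relation proved directly for complex $s$ and iterated to a limit, your outline becomes essentially the paper's argument.
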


\begin{proof}
We first prove the theorem for $q \in (0,1)$. From the definition of the $q$-binomial coefficient for non-integer $s$, we use 
\[
{n \choose s}_{q^2} = \prod_{j=1}^n(1-q^{2j})\prod_{j=n+1}^\infty (1-q^{2j-2s})\prod_{j=1}^\infty \frac{(1-q^{2j+2s})}{(1-q^{2j})^2}
\] 
to see that the theorem is equivalent to 
\[
\sum_{n=0}^\infty q^{n(n+1)-2sn}  \prod_{j=1}^n \frac{(1-q^{2j-1})^2}{(1-q^{4j})} \prod_{j=n+1}^\infty (1-q^{2j-2s})= \prod_{n=0}^\infty (1-q^{4n+3-2s})^2.
\]
To this equation we multiply both sides by $\displaystyle \prod_{j=1}^\infty (1-q^{4j})$ and set $x = q^{-2s}$ to get 
\begin{equation}\label{multiply by q^4 prod}
\sum_{n=0}^\infty q^{n(n+1)}x^n  \prod_{j=1}^n (1-q^{2j-1})^2 \prod_{j=n+1}^\infty (1-q^{2j}x)(1-q^{4j})= \prod_{n=0}^\infty (1-q^{4n+3}x)^2(1-q^{4n+4}).
\end{equation}
Let
\[
\tilde{f}(x,q) = \sum_{n=0}^\infty q^{n(n+1)}x^n(\prod_{j=1}^n (1-q^{2j-1})^2 ) (\prod_{j=n+1}^\infty (1-q^{2j}x)(1-q^{4j}) ).
\]
Then 
\[
\tilde{f}(x,q) = \prod_{n=0}^\infty  (1-q^{4n+4})(1-q^{4n+3}x)^2.
\]
We prove that 
\[
\tilde{f}(x,q) = (1-q^3x)^2 \tilde{f}(q^4x,q).
\]
Let 
\[
\tilde{f}_{n}(x,q) = q^{n(n+1)}x^n(\prod_{j=1}^n (1-q^{2j-1})^2 ) (\prod_{j=n+1}^\infty (1-q^{2j}x)(1-q^{4j}) ) 
\]
so 
\[
\tilde{f}(x,q) = \sum_{n=0}^\infty \tilde{f}_n(x,q).
\]
We calculate that 
\begin{align*}
\tilde{f}_{n}(x,q) -(1-q^3 x)^2 \tilde{f}_n(q^4 x,q) = q^{n(n+1)}x^n(1-q^{4n} - x(q^{2n+2}+ q^{2n+4}-2q^{4n+3}) )\\ 
\times  \prod_{j=1}^n (1-q^{2j-1})^2 \prod_{j=n+3}^\infty (1-q^{2j}x) \prod_{j=n+1}^\infty (1-q^{4j} ). 
\end{align*}
We claim that 
\[
\sum_{n=0}^N \tilde{f}_{n}(x,q) - \tilde{f}_n(q^4 x,q)(1-q^3 x)^2 = -q^{(N+1)(N+2)}x^{N+1} \prod_{j=1}^{N+1} (1-q^{2j-1})^2\prod_{j=N+3}^\infty (1-q^{2j}x) \prod_{j=N+1}^\infty (1-q^{4j}). 
\]
We prove this claim by induction on $N$. It is true for $N=0$. Assume it is true for some $N\geq 0$.
Then the induction step is implied by the identity
\[
-(1-q^{2N+6})(1-q^{4N+4})+1-q^{4N+4}-x(q^{2N+4} +q^{2N+6}-2q^{4N+7}) = -xq^{2N+4}(1-q^{2N+3})^2.
\]
Therefore 
\[
\lim_{N \rightarrow \infty} \sum_{n=0}^N \tilde{f}_{n}(x,q) -(1-q^3 x)^2 \tilde{f}_n(q^4 x,q) =0
\]
and 
\[
\tilde{f}(x,q) =  \tilde{f}(q^4x,q)(1-q^3x)^2.
\]
Iterating gives 
\[
\tilde{f}(x,q) = \tilde{f}(0,q)\prod_{n=0}^\infty (1-q^{4n+3}x)^2
\]
and 
\[
\tilde{f}(0,q) = \prod_{n=0}^\infty (1-q^{4n+4}).
\]
This proves the theorem for $q \in (0,1)$.

To prove it for $q=1$, note that if $s$ is a non-negative integer, all sums and products become finite, so we may take the limit $q\rightarrow 1^{-}$ and we are done. If $s$ is a negative integer, then each term in the sum is 0 and the right hand side is also 0. 

If $s$ is not an integer, we follow the same procedure for $q<1$, but, instead of multiplying by $\displaystyle \prod_{j=1}^\infty (1-q^{4j})$ at \eqref{multiply by q^4 prod}, we divide by $\displaystyle \prod_{j=1}^\infty (1-q^{2j}x)$. With $x = q^{-2s}$, we let $q=1$ and set 
\begin{equation*}
f_n(s) =\prod_{j=1}^n \frac{(2j-1)^2}{(2j-2s)(4j)} \,\,\,\, \mathrm{ and } \, \, \, \, f(s) = \sum_{n=0}^\infty f_n(s).  
\end{equation*}
Lemma \ref{sum f(s) convergent} proves the convergence of the sum $f(s)$. 

We prove that 
\[
f(s) = \frac{(3-2s)^2}{(2-2s)(4-2s)}f(s-2).
\]
By the same reasoning for $q<1$, we have 
\[
\sum_{n=0}^{N} (f_n(s) - \frac{(3-2s)^2}{(2-2s)(4-2s)}f_n(s-2)) = -\frac{(2N+1)^2}{(2N+2-2s)(2N+4-2s)}\prod_{j=1}^N \frac{(2j-1)^2}{(2j-2s)(4j)}.
\]
The right side of the above equation goes to 0 as $N\rightarrow \infty$ by the same reasoning we give for the bounds of $f_n(s)$ in Lemma \ref{sum f(s) convergent}.
Therefore 
\[
\lim_{N\rightarrow \infty } \sum_{n=0}^{N}( f_n(s) - \frac{(3-2s)^2}{(2-2s)(4-2s)}f_n(s-2))=0
\]
proving 
\[
f(s) = \frac{(3-2s)^2}{(2-2s)(4-2s)}f(s-2).
\]
Iterating we have 
\[
f(s) = (\lim_{N\rightarrow \infty} f(s-N))\prod_{n=0}^\infty \frac{(4n+3-2s)^2}{(4n+2-2s)(4n+4-2s)}
\]
and 
\[
 \lim_{N\rightarrow \infty} f(s-N)=1
\]
because for $N >\mathrm{Re}(s)$, we have 
\[
|f(s-N) -1|\leq 2^{-1}\frac{K}{\sqrt{(1+|\mathrm{Re}(s-N)|)^2+ \mathrm{Im}(s)^2}}
\]
from the proof of Lemma \ref{sum f(s) convergent}. This proves the theorem for $q=1$. 

Now 
\[
{s-\frac{1}{2} \choose s}_{q^2} =\prod_{n=0}^\infty \frac{(1-q^{2s+2n+2})(1-q^{2n+1})}{(1-q^{2s+1+2n})(1-q^{2s+1+4n})}
\]
and 
\[
\frac{\Gamma(\frac{2s+1}{4})_{q^4}}{\Gamma(\frac{2s+3}{4})_{q^4}} = (1-q^4)^{\frac{1}{2}}\prod_{n=0}^\infty \frac{1-q^{2s+3+4n}}{1-q^{2s+1+4n}}.
\]
Therefore we can express the right side as 
\[
(1+\mathrm{SinPi}) (s,q^2) {s-\frac{1}{2} \choose s}_{q^2} C(q)\frac{\Gamma(\frac{2s+1}{4})_{q^4}}{\Gamma(\frac{2s+3}{4})_{q^4}} 
\]
where 
\[
C_2(q) = (1-q^4)^{-\frac{1}{2}} \prod_{n=0}^\infty \frac{1-q^{2n+1}}{1-q^{2n+2}}.
\]
Now 
\[
\lim_{q\rightarrow 1^-} C_2(q) = \frac{1}{\sqrt{pi}} 
\]
because $C_2(q)^2$ is a $q$-analogue for the Wallis product of $\pi$; the limit follows from similar reasoning in Lemma \ref{q Wallis 4} by taking $x=\frac{1}{2}$ in the product for $\sin(\pi x)$. 
\end{proof}

\begin{lemma} \label{sum f(s) convergent}
For $s$ not a positive integer, the sum $f(s)$ is convergent, where
 \begin{equation*}
f_n(s) =\prod_{j=1}^n \frac{(2j-1)^2}{(2j-2s)(4j)} \,\,\,\, \mathrm{ and } \, \, \, \, f(s) = \sum_{n=0}^\infty f_n(s).  
\end{equation*}
\end{lemma}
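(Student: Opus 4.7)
The plan is to apply the ratio test to the series $\sum_{n=0}^\infty f_n(s)$. Since $s$ is not a positive integer, the factor $2j-2s = 2(j-s)$ never vanishes for $j \geq 1$, so every $f_n(s)$ is well-defined. Taking the ratio of consecutive terms gives
\begin{equation*}
\left|\frac{f_{n+1}(s)}{f_n(s)}\right| = \frac{(2n+1)^2}{|2n+2-2s|\,(4n+4)} = \frac{(2n+1)^2}{8(n+1)\,|n+1-s|}.
\end{equation*}
The numerator is asymptotic to $4n^2$ and the denominator to $8n^2$ as $n\to\infty$, so the ratio tends to $\tfrac{1}{2} < 1$. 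The ratio test then yields absolute convergence of $f(s)$.

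For the sharper decay bounds invoked elsewhere in the proof of Theorem \ref{q sum thm} (in particular the claim that the remainder $-\frac{(2N+1)^2}{(2N+2-2s)(2N+4-2s)}\prod_{j=1}^N \frac{(2j-1)^2}{(2j-2s)(4j)}$ vanishes as $N \to \infty$, and the estimate for $|f(s-N)-1|$), I would factor
\begin{equation*}
f_n(s) = \left(\prod_{j=1}^n \frac{2j-1}{2j}\right)^{\!2} \cdot \frac{1}{2^n}\prod_{j=1}^n \frac{j}{j-s}.
\end{equation*}
The squared Wallis-type product is $O(1/n)$; the factor $2^{-n}$ supplies geometric decay; and $\prod_{j=1}^n \frac{j}{j-s} = n!/\prod_{j=1}^n(j-s)$, which by comparison with the Gamma function grows at most polynomially in $n$ with degree depending on $\mathrm{Re}(s)$. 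Combining these yields an explicit bound of the form $|f_n(s)| \leq C_s\, 2^{-n}\, n^{\mathrm{Re}(s)-1}$, where the constant $C_s$ is bounded uniformly as $\mathrm{Re}(s) \to -\infty$. This uniformity is precisely what is needed to make $\lim_{N\to\infty} f(s-N) = 1$ in the iteration at the end of the proof of Theorem \ref{q sum thm}.

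The only subtlety is ensuring the denominators $j-s$ never vanish, which is exactly the content of the hypothesis that $s$ is not a positive integer; once this is in hand, the rest is routine majorization. The main ``obstacle'' is really just cosmetic: one must carry the case split $\mathrm{Re}(s)\geq 0$ versus $\mathrm{Re}(s)<0$ in the polynomial bound to keep the dependence on $s$ explicit enough to justify the later $N\to\infty$ limit, but no new ideas are required beyond the ratio test sketched above.
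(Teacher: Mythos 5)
Your argument is correct, and for the lemma as literally stated it is simpler than the paper's. The paper does not use the ratio test: it directly majorizes $|f_n(s)|$ by splitting the product into three pieces --- a convergent Wallis-type product carrying the factor $2^{-n}$, a block of $\lceil \mathrm{Re}(s)\rceil-1$ factors bounded by a polynomial in $n$, and a tail of factors each of modulus at most $1$ --- arriving at $|f_n(s)|\le 2^{-n}p_s(n)$ for $\mathrm{Re}(s)>0$ and $|f_n(s)|\le 2^{-n}K/\sqrt{(1+|\mathrm{Re}(s)|)^2+\mathrm{Im}(s)^2}$ for $\mathrm{Re}(s)<0$. Your second factorization, $f_n(s)=\bigl(\prod_{j=1}^n\tfrac{2j-1}{2j}\bigr)^2\cdot 2^{-n}\prod_{j=1}^n\tfrac{j}{j-s}$, is essentially the paper's decomposition in cleaner form, and it is the part that actually matters downstream: the lemma is invoked in the proof of Theorem \ref{q sum thm} not only for convergence but for the vanishing of the telescoping remainder and for $\lim_{N\to\infty}f(s-N)=1$, and the ratio test alone gives neither. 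So your instinct to supply the explicit $2^{-n}$-times-polynomial bound in addition to the ratio test is exactly right; the ratio test is a nice economy for the convergence claim itself but cannot replace the quantitative estimate.

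One small imprecision: a bound of the form $|f_n(s)|\le C_s\,2^{-n}\,n^{\mathrm{Re}(s)-1}$ with $C_s$ merely \emph{bounded} as $\mathrm{Re}(s)\to-\infty$ does not by itself force $f(s-N)\to 1$, because at $n=1$ the factor $n^{\mathrm{Re}(s)-1}$ equals $1$ and the bound does not decay. What is needed, and what the paper records, is a bound tending to $0$ in $|\mathrm{Re}(s)|$ for every fixed $n\ge 1$; your own factorization delivers this if you keep the $j=1$ factor explicit, namely $\bigl|\prod_{j=1}^n\tfrac{j}{j-s}\bigr|\le\tfrac{1}{|1-s|}\le\tfrac{1}{\sqrt{(1+|\mathrm{Re}(s)|)^2+\mathrm{Im}(s)^2}}$ for $\mathrm{Re}(s)<0$, since every subsequent factor has modulus at most $1$. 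That is precisely the paper's bound in the case $\mathrm{Re}(s)<0$.
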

\begin{proof}
The sum on the right is convergent because if $\mathrm{Re}(s)>0$ then we may bound $|f_n(s)|$ by 
\[
|f_n(s)|\leq 2^{-n}p_s(n)
\] 
where $p_s(n)$ is a polynomial in $n$ whose coefficients and degree depend on $s$. To see this, we have for $n> \mathrm{Re}(s)$ 
\begin{align}
|f_n(s)|=&|2^{-n}\prod_{j=2}^n \frac{(2j-1)^2}{(2j-2)(2j)}| \label{convergent product}\\ 
&\times |\frac{1}{4n}\prod_{j=1}^{\lceil \mathrm{Re}(s) \rceil -1}\frac{j+n-\lceil \mathrm{Re}(s) \rceil+1}{j-s} | \label{poly depending on s}\\ 
&\times |\prod_{j=1}^{n-\lceil \mathrm{Re}(s) \rceil +1}\frac{j}{j+\lceil \mathrm{Re}(s) \rceil - \mathrm{Re}(s)+i\mathrm{Im}(s) }| \label{bounded by 1}. 
\end{align}
Now the product at \eqref{convergent product} is convergent as $n \rightarrow \infty$; the product at \eqref{poly depending on s} is bounded by a polynomial in $n$ depending on $s$, and \eqref{bounded by 1} is

\[
 |\prod_{j=1}^{n-\lceil \mathrm{Re}(s) \rceil +1}\frac{j}{j+\lceil \mathrm{Re}(s) \rceil - \mathrm{Re}(s)+i\mathrm{Im}(s) }| \leq  \prod_{j=1}^{n-\lceil \mathrm{Re}(s) \rceil +1}\frac{1}{\sqrt{(1+\lceil \mathrm{Re}(s) \rceil - \mathrm{Re}(s))^2 + \mathrm{Im}(s)^2 }}
\]
which is bounded by 1. 

 If $\mathrm{Re}(s)<0$, then by the above reasoning we may bound $f_n(s)$ for $n\geq 1$ by 
\[
|f_n(s)|\leq 2^{-n}\frac{K}{\sqrt{(1+|\mathrm{Re}(s)|)^2+ \mathrm{Im}(s)^2}}.
\]
where $K$ is a constant independent of $s$.
\end{proof}

We include this lemma which be useful elsewhere.
\begin{lemma}
Let $t,s,q \in \mathbb{R}$ such that $t\geq s$. Let
\[
f(s,t,q) = \frac{(1-q^t)}{(1-q^{t-s})}.
\]
Then for fixed $t$ and $s$, $f(t,s,q)$ is an increasing function of $q$ on $(0,1)$. 
\end{lemma}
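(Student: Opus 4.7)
The plan is to show $\partial f/\partial q \geq 0$ on $(0,1)$ by a direct derivative computation. Applying the quotient rule to $f(q) := (1-q^t)/(1-q^{t-s})$ and then pulling out the positive factor $q^{t-s-1}/(1-q^{t-s})^2$ from the numerator of the derivative reduces the problem to the single-variable inequality
\[
g(q) := (t-s) - tq^s + sq^t \geq 0 \quad \text{for all } q \in (0,1).
\]

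I would then analyze $g$ through its endpoint behavior and its own derivative. Direct substitution gives $g(1) = (t-s) - t + s = 0$, and differentiating yields
\[
g'(q) = -ts\, q^{s-1} + st\, q^{t-1} = st\, q^{s-1}\bigl(q^{t-s}-1\bigr).
\]
Under the natural sign assumption $s \geq 0$ (so that $s, t \geq 0$ with $t > s$, noting that $t = s$ is excluded by the denominator), each of the factors $st$, $q^{s-1}$, and $-(q^{t-s}-1)$ is nonnegative on $(0,1)$, hence $g'(q) \leq 0$. Thus $g$ is nonincreasing on $(0,1]$, and combined with $g(1)=0$ this forces $g(q) \geq 0$ on $(0,1)$, establishing the monotonicity claim.

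I do not anticipate any serious obstacle: the argument is essentially a one-variable calculus exercise, and the real content is the algebraic simplification of the derivative's numerator into the compact form $(t-s) - tq^s + sq^t$. Two small caveats worth flagging in the writeup are (i) the apparent typographical inconsistency between $f(s,t,q)$ in the definition and $f(t,s,q)$ in the conclusion, which I read as a typo for $f(s,t,q)$; and (ii) the implicit nonnegativity of $s$, without which the statement fails (for instance, $s=-1$, $t=1$ yields a strictly decreasing function on $(0,1)$), so this hypothesis should be added.
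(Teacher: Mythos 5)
Your proof is correct, and it shares the paper's first step---differentiating $f$ and reducing to the inequality $(t-s)-tq^{s}+sq^{t}\ge 0$ on $(0,1)$ (the paper writes this as $-s+t+sq^{t}-tq^{s}\ge 0$)---but from that point the two arguments genuinely diverge. You treat the reduced expression as a function $g(q)$ of $q$, note $g(1)=0$, and show $g'(q)=st\,q^{s-1}(q^{t-s}-1)\le 0$ on $(0,1)$: a self-contained one-variable calculus argument. The paper instead rewrites the inequality as the statement that $t\mapsto (1-q^{t})/t$ is decreasing in $t$, and proves that monotonicity by expanding $q^{t}=(1-(1-q^{\alpha}))^{t/\alpha}$ as a binomial series and inspecting the sign and monotonicity of each term; this is a slicker reinterpretation but uses heavier machinery, and the rewriting step requires dividing the inequality by $st$. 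Notably, both arguments need the sign hypothesis you flag: your derivative computation needs $st\ge 0$, and the paper's division by $st$ silently assumes $st>0$. Your counterexample $s=-1$, $t=1$ (where $f=1/(1+q)$ is decreasing) shows the lemma as literally stated is false, so adding $s\ge 0$ (or more generally $st\ge 0$) is a genuine correction rather than a cosmetic caveat, and your reading of $f(t,s,q)$ as a typo for $f(s,t,q)$ matches the paper's own usage.
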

\begin{proof}
Taking $\frac{\partial}{\partial q} f(s,t,q)$, we see that the lemma is equivalent to 
\[
-s+t +sq^t-tq^s \geq 0.
\]
for $q \in (0,1)$. This is equivalent to   
\[
g(t,q) = \frac{(1-q^t)}{t}
\]
being a decreasing function of $t$ for $t\in \mathbb{R}$ and for fixed $q$. To prove that $g(t,q)$ is a decreasing function, choose $\alpha >0$ and write by the binomial expansion 
\begin{align*}
q^t = (1-(1-q^{\alpha}))^{\frac{t}{\alpha}} = &\sum_{n=0}^\infty (-1)^n \frac{(1-q^\alpha)^n}{n!\alpha^n} \prod_{j=0}^{n-1} (t-\alpha j)\\
& =1-t\sum_{n=1}^\infty  \frac{(1-q^\alpha)^n}{n!\alpha^n} \prod_{j=1}^{n-1} (\alpha j-t).
\end{align*}  
Therefore 
\[
g(t,q) = \sum_{n=1}^\infty  \frac{(1-q^\alpha)^n}{n!\alpha^n} \prod_{j=1}^{n-1} (\alpha j-t).
\]
For $t \in (-\infty, \alpha)$, each term in the above sum is positive and, for $n\geq 2$, decreases in magnitude as $t$ increases to $\alpha$, while the $n=1$ term remains constant. Therefore $g(t,q)$ is decreasing on $(-\infty,\alpha)$ for any $\alpha >0$.  
\end{proof}

\subsection{$q$-analogue of the Integral} \label{q integral}
Now we prove a $q$-analogue of \eqref{integral}. We first show how to obtain a sum from the integral.
\begin{lemma}
\[
\int_0^1 \frac{1}{\sqrt{1-t^2}\sqrt{2-t^2}}(\frac{t^2}{2-t^2})^s \, dt  =
\frac{(-\frac{1}{2})! (s-\frac{1}{2})!}{2(s!)} \sum_{n=0}^\infty (-1)^n \prod_{j=1}^n \frac{(2j-1)(2s+2j-1)}{(2j)(2s+2j)}
 \] 
\end{lemma}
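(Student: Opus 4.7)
The strategy is to reduce the integral on the left to a standard form via a change of variables, and then expand the resulting integrand as a binomial series and integrate term by term; the resulting series will match the one on the right exactly, with the correct gamma-function prefactor.

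First I would substitute $v = t^2/(2-t^2)$, equivalently $t^2 = 2v/(1+v)$. A direct computation yields
\[
\sqrt{1-t^2} = \sqrt{\frac{1-v}{1+v}}, \qquad \sqrt{2-t^2} = \sqrt{\frac{2}{1+v}}, \qquad dt = \frac{dv}{\sqrt{2v(1+v)^3}},
\]
and the factors of $(1+v)$ and $2$ combine neatly so that
\[
\int_0^1 \frac{1}{\sqrt{1-t^2}\sqrt{2-t^2}}\left(\frac{t^2}{2-t^2}\right)^s dt \;=\; \frac{1}{2}\int_0^1 \frac{v^{s-1/2}}{\sqrt{1-v^2}}\,dv.
\]
This is the main bookkeeping step; it is straightforward but must be done carefully.

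Next I would write $1/\sqrt{1-v^2} = (1-v)^{-1/2}(1+v)^{-1/2}$ and expand the second factor by the binomial series
\[
(1+v)^{-1/2} = \sum_{n=0}^\infty (-1)^n \frac{(1/2)_n}{n!}\, v^n, \qquad \frac{(1/2)_n}{n!} = \prod_{j=1}^n \frac{2j-1}{2j}.
\]
Integrating term by term against $v^{s-1/2}(1-v)^{-1/2}$, which is justified for $\mathrm{Re}(s) > -\tfrac{1}{2}$ by uniform absolute convergence on $[0,1-\epsilon]$ together with a standard dominated-convergence argument near $v=1$, produces beta integrals
\[
\int_0^1 v^{s+n-1/2}(1-v)^{-1/2}\,dv \;=\; B\!\left(s+n+\tfrac{1}{2},\,\tfrac{1}{2}\right) \;=\; \frac{\Gamma(s+n+1/2)\,\Gamma(1/2)}{\Gamma(s+n+1)}.
\]

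Finally, using the Pochhammer identities $\Gamma(s+n+1/2) = (s+1/2)_n\,\Gamma(s+1/2)$ and $\Gamma(s+n+1) = (s+1)_n\,\Gamma(s+1)$, together with $(s+1/2)_n/(s+1)_n = \prod_{j=1}^n (2s+2j-1)/(2s+2j)$ and the identifications $\Gamma(1/2) = (-1/2)!$, $\Gamma(s+1/2) = (s-1/2)!$, $\Gamma(s+1) = s!$, the sum collapses to
\[
\frac{(-1/2)!\,(s-1/2)!}{2\,s!} \sum_{n=0}^\infty (-1)^n \prod_{j=1}^n \frac{(2j-1)(2s+2j-1)}{(2j)(2s+2j)},
\]
which is the right-hand side of the lemma. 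Extension to the full complex range of $s$ (away from the poles in the prefactor) then follows from analyticity of both sides.

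The one step that requires attention is the substitution in the first paragraph — getting the Jacobian together with the two square-root factors to collapse to $\tfrac12 v^{s-1/2}(1-v^2)^{-1/2}\,dv$. The termwise integration and the Pochhammer bookkeeping are routine.
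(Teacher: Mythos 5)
Your proof is correct, and it takes a genuinely (if mildly) different route from the paper's. The paper substitutes $u=t^2$ and expands the $s$-dependent factor $(2-u)^{-s-\frac12}$ as a binomial series in $1-u$, so that the $n$th term is $\binom{-s-\frac12}{n}\int_0^1 u^{s-\frac12}(1-u)^{n-\frac12}\,\frac{du}{2}$; there the product $\prod_{j}\frac{2s+2j-1}{2s+2j}$ emerges from the binomial coefficient and $\prod_{j}\frac{2j-1}{2j}$ from the beta integral. You instead substitute $v=t^2/(2-t^2)$, which collapses the integral to $\frac12\int_0^1 v^{s-\frac12}(1-v^2)^{-\frac12}\,dv$, and expand the $s$-independent factor $(1+v)^{-\frac12}$, so the roles of the two products are exchanged and all the $s$-dependence is carried by the beta integral $B(s+n+\frac12,\frac12)$. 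Your version buys a cleaner justification of the termwise integration: the partial sums of the alternating series for $(1+v)^{-\frac12}$ are uniformly bounded by $1$ on $[0,1]$ independently of $s$, so dominated convergence applies at once with dominating function $v^{\mathrm{Re}(s)-\frac12}(1-v)^{-\frac12}$ for $\mathrm{Re}(s)>-\frac12$, and this simultaneously establishes convergence of the series on the right (a point the paper defers to a separate lemma); in the paper's version the expanded coefficients depend on $s$ and the paper omits the justification entirely. One small caveat: your closing remark about extending to ``the full complex range of $s$'' should be read as $\mathrm{Re}(s)>-\tfrac12$, since the left-hand integral diverges at $t=0$ otherwise. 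Aside from these points the two arguments are the same in spirit: a binomial expansion followed by beta integrals and Pochhammer bookkeeping.
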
 
\begin{proof}
We make the change of variable $t \mapsto \sqrt{t}$ and express $\displaystyle \frac{1}{\sqrt{2-t}}(\frac{t}{2-t})^s$ as a binomial series in $1-t$ to obtain 
\[
\int_0^1 \frac{1}{\sqrt{1-t^2}\sqrt{2-t^2}}(\frac{t^2}{2-t^2})^s \, dt  = \sum_{n=0}^\infty \int_0^1t^{s-\frac{1}{2}}(1-t)^{n-\frac{1}{2}} { -s-\frac{1}{2} \choose n}\, \frac{dt}{2}.
\]
To this we apply 
\[
\int_0^1 at^{a-1}(1-t)^b \, dt = \frac{a!b!}{(a+b)!}
\]
and 
\[
(x!)(-x)! = \frac{\pi x}{\sin(\pi x)}
\]
to obtain 
\[
\frac{1}{2} \sum_{n=0}^\infty  (-1)^n \frac{(n-\frac{1}{2})!}{n!} \frac{(s+n-\frac{1}{2})!}{(s+n)!} 
\]
which is equal to 
\[
\frac{(-\frac{1}{2})! (s-\frac{1}{2})!}{2(s!)} \sum_{n=0}^\infty (-1)^n \prod_{j=1}^n \frac{(2j-1)(2s+2j-1)}{(2j)(2s+2j)}.
\]
\end{proof}

We present a $q$-analogue of the above sum and its evaluation as a product in the following theorem: 

\begin{theorem}
Let 
\[
f(x,q) = \sum_{n=0}^\infty (-1)^n q^n \prod_{j=1}^n \frac{(1-q^{2j-1}x)(1-q^{+2j-1})}{(1-q^{2j})(1-q^{2j}x)}.
\]
Then for $s \in \mathbb{C}$ not a negative integer and $q \in (0,1]$, 
\begin{align*}
f(q^{2s},q) = &f(0,q)\prod_{n=0}^\infty \frac{(1- q^{2s+4n+3})^2}{(1-q^{2s+4n+2})(1-q^{2s+4n+4})}\\ 
               =& f(0,q)\frac{C_3(q)}{(1+q^2)^s}\frac{(s!)_{q^2}}{(\frac{2s-1}{4})!_{q^4}^2} 
\end{align*}
where 
\[
C_3(q) =\left(\frac{(1-q^4)}{(1-q^2)} \prod_{n=1}^\infty \frac{(1-q^{4n})^2}{(1-q^{4n-2})(1-q^{4n+2})}\right)^{\frac{1}{2}}
\]
and $C_3(1) = \sqrt{\pi}$ and $f(0,1)= \frac{1}{\sqrt{2}}$. 
That is, 
\begin{align*}
&\sum_{n=0}^\infty (-1)^n q^n \prod_{j=1}^n \frac{(1-q^{2j-1})(1-q^{2s+2j-1})}{(1-q^{2j})(1-q^{2s+2j})} \\ 
=& \left(\sum_{n=0}^\infty (-1)^n q^n \prod_{j=1}^n \frac{(1-q^{2j-1})}{(1-q^{2j})}\right)  
\prod_{n=0}^\infty \frac{(1- q^{2s+4n+3})^2}{(1-q^{2s+4n+2})(1-q^{2s+4n+4})}. 
\end{align*}

\end{theorem}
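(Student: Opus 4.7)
The plan is to mimic the proof of Theorem \ref{q sum thm} by establishing a $q$-difference equation in $x$ for $f(x,q)$ and then iterating. The target relation, equivalent (via $s \mapsto s+2$) to the product formula in the theorem, is
\[
(1-q^3 x)^2\, f(q^4 x, q) = (1-q^2 x)(1-q^4 x)\, f(x,q). \qquad (\ast)
\]
Absolute convergence of the defining series for $f(x,q)$ is clear for $q \in (0,1)$ and $x$ avoiding the poles $q^{-2j}$, because the products $\prod_{j=1}^n \frac{(1-q^{2j-1}x)(1-q^{2j-1})}{(1-q^{2j})(1-q^{2j}x)}$ tend to a finite limit as $n\to \infty$, leaving the geometric factor $(-q)^n$ to control the tail.

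First I would prove $(\ast)$ by a telescoping argument modeled on the one in Theorem \ref{q sum thm}. Setting $f_n(x,q) = (-1)^n q^n \prod_{j=1}^n \frac{(1-q^{2j-1}x)(1-q^{2j-1})}{(1-q^{2j})(1-q^{2j}x)}$ and
\[
D_n(x,q) := (1-q^2 x)(1-q^4 x)\, f_n(x,q) - (1-q^3 x)^2\, f_n(q^4 x, q),
\]
I would identify a single-term remainder $R_N(x,q)$ and prove by induction on $N$ that $\sum_{n=0}^N D_n(x,q) = R_N(x,q)$, with $R_N \to 0$ as $N \to \infty$. The induction step reduces, after clearing denominators, to a polynomial identity in $q$ and $x$ analogous to the one that closed the proof of Theorem \ref{q sum thm}. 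Once $(\ast)$ is established, iteration yields
\[
f(x,q) = \Bigg(\prod_{k=0}^{N-1} \frac{(1-q^{4k+3}x)^2}{(1-q^{4k+2}x)(1-q^{4k+4}x)}\Bigg)\, f(q^{4N}x, q),
\]
and letting $N \to \infty$, using $q^{4N}x \to 0$ and continuity of $f$ at $x=0$, yields the desired product formula after the substitution $x = q^{2s}$. The alternate form involving $C_3(q)$, $(s!)_{q^2}$, and $\bigl(\tfrac{2s-1}{4}\bigr)!_{q^4}$ is then obtained by direct manipulation, matching factors in the infinite product to the definitions of the generalized $q$-factorials.

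For $q = 1$ with $s$ not a negative integer, I would follow the approach at the end of the proof of Theorem \ref{q sum thm}: first multiply both sides by an appropriate finite truncation to tame the $0/0$ terms, then pass to the limit and re-derive the classical analogue of $(\ast)$ via the same partial-sum telescoping argument, using bounds in the spirit of Lemma \ref{sum f(s) convergent} to justify that the remainder vanishes. The evaluations $C_3(1) = \sqrt{\pi}$ and $f(0,1) = 1/\sqrt{2}$ then follow, respectively, from the Wallis product (via the monotonicity argument of Lemma \ref{q Wallis 4}, applied at $x = \tfrac{1}{2}$ in the Euler product for $\sin \pi x$) and the binomial-series evaluation $(1+z)^{-1/2}\big|_{z=1} = 2^{-1/2}$.

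The main obstacle is identifying the correct form of the single-term remainder $R_N(x,q)$ so that the induction step in $(\ast)$ reduces to a short polynomial identity; the bookkeeping is heavier than in Theorem \ref{q sum thm} because each $f_n$ now contains four $q$-factors in its product. A secondary technical difficulty is making the $q=1$ limit rigorous for non-integer $s$, where one must bound the analogue of $R_N$ uniformly in a neighborhood of $q=1$ to pass the vanishing of the remainder through the limit.
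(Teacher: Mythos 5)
Your plan is essentially the paper's proof: the paper first multiplies $f(x,q)$ by $\prod_{j\geq 1}(1-q^{2j})(1-q^{2j}x)$ so that your relation $(\ast)$ becomes the equivalent $\tilde{f}(x,q)=(1-q^3x)^2\,\tilde{f}(q^4x,q)$, establishes it by exactly the partial-sum telescoping you describe (with an explicit one-term remainder $R_N(x,q)$ containing a factor $q^N$ that forces $R_N\to 0$), iterates, and then treats $q=1$ separately by redoing the telescoping at $q=1$ to get $f(s)=\frac{(3+2s)^2}{(2+2s)(4+2s)}f(s+2)$ and computing $\lim_{N}f(s+N)$. The only small deviation is your evaluation $f(0,1)=1/\sqrt{2}$ via the binomial series for $(1+z)^{-1/2}$ at $z=1$, where the paper instead compares with Theorem \ref{q sum thm}; both are fine.
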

\begin{proof}
We first prove theorem for $q \in (0,1)$. We multiply the left side of the theorem by 
\begin{equation} \label{divide}
\prod_{j=1}^\infty  (1-q^{2j})(1-q^{2s+2j})
\end{equation}
and set $x=q^{2s}$ to obtain 
\[
\tilde{f}(x,q)=\sum_{n=0}^\infty \tilde{f}_n(x,q). 
\]
where
\[
\tilde{f}_n(x,q) = (-1)^n q^n(\prod_{j=1}^n(1-q^{2j-1})(1-q^{2j-1} x))(\prod_{j=n+1}^\infty(1-q^{2j})(1-q^{2j} x)) 
\]

We prove
\[
\tilde{f}(x,q)=(1-q^3 x)^2 \tilde{f}(q^4x, q). 
\]
We claim
\begin{multline}\label{induction}
\sum_{n=0}^N (\tilde{f}_n(x,q) -(1-q^3 x)^2 \tilde{f}_n(q^4x, q)) =q^2(1-q) xR_N(x,q)(\prod_{j=N+1}^\infty (1-q^{2j}))(\prod_{j=N+3}^\infty (1-q^{2j}x))
\end{multline} 
where 
\[
R_N(x,q)= (-1)^N q^{N} (\prod_{m=0}^N( 1-q^{2m+1}))(\prod_{m=1}^N( 1-q^{2m+1}x)). 
\]
We prove \eqref{induction} by induction on $N$. It is true for $N=0$ and $1$. Assume it is true for $N\geq 1$.
Then  
\begin{align*}
\sum_{n=0}^{N+1} (\tilde{f}_n(x,q) -(1-q^3 x)^2 \tilde{f}_n(q^4x, q)) &= q^2(1-q) xR_N(x,q)  (\prod_{m=N+1}^\infty (1-q^{2m}))(\prod_{m=N+3}^\infty (1-q^{2m}x))\\ 
&+ \tilde{f}_{N+1}(x,q) -(1-q^3 x)^2 \tilde{f}_{N+1}(q^4x, q).
\end{align*}
Now
\begin{align*}
\tilde{f}_{N+1}(x,q) -&(1-q^3 x)^2 \tilde{f}_{N+1}(q^4x, q) = -qR_N(x,q)\\
&\times((1-q^3 x)(1-q^{2N+3}x)(1-q^{2N+5}x) -(1-q x)(1-q^{2N+4}x)(1-q^{2N+6}x) )\\
&\times(\prod_{m=N+2}^{\infty}(1-q^{2m}))(\prod_{m=N+4}^{\infty}(1-q^{2m} x)).
\end{align*}
To this we apply the identity
\begin{align*}
& q^2(1-q)x(1-q^{2N+2})(1-q^{2N+6}x) \\ 
&- q ((1-q^3 x)(1-q^{2N+3}x)(1-q^{2N+5}x) -(1-q x)(1-q^{2N+4}x)(1-q^{2N+6}x) ) \\ 
&=  -(1-q)q^3 x (1-q^{2N+3})(1-q^{2N+3}x).
\end{align*}
Therefore 
\begin{align*}
\sum_{n=0}^{N+1} (\tilde{f}_n(x,q) -(1-q^3 x)^2 \tilde{f}_n(q^4x, q)) &= -q^3(1-q) x  R_N(x,q) (1-q^{2N+3})(1-q^{2N+3}x)  \\ 
&\times (\prod_{m=N+2}^\infty (1-q^{2m}))(\prod_{m=N+4}^\infty (1-q^{2m}x))\\ 
&=  q^2(1-q) x R_{N+1}(x,q) (\prod_{m=N+2}^\infty (1-q^{2m}))(\prod_{m=N+4}^\infty (1-q^{2m}x)).
\end{align*}
This completes the induction step. 

Because of the $q^N$ in $R_N(x,q)$, we have
 \[
 \lim_{N \rightarrow \infty} R_N(x,q)=0, 
\]
so
\[
\sum_{n=0}^\infty (\tilde{f}_n(x,q) - (1-q^3x)^2 \tilde{f}_n(q^4x,q))=0
\]
and thus 
\[
\tilde{f}(x,q) = (1-q^3x)^2 \tilde{f}(q^4x,q).
\]
Iterating we obtain
\[
\tilde{f}(x,q)=\tilde{f}(0,q)\prod_{m=0}^\infty (1- q^{4m+3}x)^2.
\]
Now we divide both sides by 
\[
\prod_{j=1}^\infty (1-q^{2j})(1-q^{2j}x)
\] 
which completes the proof for $q\in (0,1)$. 

For $q=1$, we follow the same procedure for $q<1$, but do not divide by \newline $\displaystyle \prod_{j=1}^\infty  (1-q^{2j})(1-q^{2s+2j})$ at \eqref{divide}. We let 
\[
f_n(s) = (-1)^n \prod_{j=1}^{n}  \frac{(2j-1)(2s+2j-1)}{(2j)(2s+2j)}) \, \, \,\, \mathrm{ and } \,\,\,\,\, f(s) = \sum_{n=0}^\infty f_n(s).
\]
In Lemma \ref{second q=1 convergence} we prove that the sum on the right is convergent. We now prove 
\begin{equation} \label{second f transform}
f(s) = \frac{(3+2s)^2}{(2+2s)(4+2s)}f(s+2).
\end{equation}
From the above reasoning for $q<1$ we have 
\[
\sum_{n=0}^N  f_n(s) - \frac{(3+2s)^2}{(2+2s)(4+2s)}f_n(s+2) = (-1)^N \frac{2N+2}{(2N+2+2s)(2N+4+2s)}\prod_{j=1}^N \frac{(2j-1)(2j+1+2s)}{(2j)(2j+2s)}.
\]
The limit of the above sum as $N \rightarrow \infty$ is 0 because the product on the right converges as $N \rightarrow \infty$ for any $s$ not a negative integer. This proves \eqref{second f transform}.
Iterating we have 
\[
f(s) = (\lim_{N \rightarrow \infty} f(s+N) ) \prod_{n=0}^\infty  \frac{(4n+3+2s)^2}{(4n+2+2s)(4n+4+2s)}
\]
where $\displaystyle \lim_{N \rightarrow \infty} f(s+N) $ is determined in Lemma \ref{second q=1 convergence}. 

The expression of the product using $q$-factorials follows from their definition. $C_3(1)=\sqrt{\pi}$ follows from the same reasoning used for the limit of $C_2(q)$. And we can determine 
\[
f(0,1) = \frac{1}{\sqrt{2}}
\]
by comparison with the evaluation found in Theorem \ref{q sum}.
\end{proof}

\begin{lemma} \label{second q=1 convergence}
The sum 
\[
f(s)=\sum_{n=0}^\infty (-1)^n \prod_{j=1}^n  \frac{(2j-1)(2s+2j-1)}{(2j)(2s+2j)}
\]
is convergent 
and 
\[
\lim_{N \rightarrow \infty} f(s+N) = \sum_{n=0}^\infty (-1)^n \prod_{j=1}^n  \frac{(2j-1)}{(2j)} = \frac{1}{\sqrt{2}}.
\]

\end{lemma}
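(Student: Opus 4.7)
My plan is to treat each term of $f(s)$ as $(-1)^n a_n\,b_n(s)$ where
\[
a_n = \prod_{j=1}^n \frac{2j-1}{2j}, \qquad b_n(s) = \prod_{j=1}^n \frac{2s+2j-1}{2s+2j},
\]
and to pivot off the classical value $S_\infty := \sum_{n=0}^\infty (-1)^n a_n = 1/\sqrt{2}$. This value follows by Abel's theorem applied to the binomial series $(1-x)^{-1/2} = \sum_n a_n x^n$: the series at $x=-1$ converges by the alternating series test since $a_n$ decreases monotonically to $0$ (Stirling gives $a_n \sim (\pi n)^{-1/2}$), and the radial limit is $\lim_{x\to -1^+}(1-x)^{-1/2} = 2^{-1/2}$. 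A parallel Gamma/Stirling estimate for $b_n(s) = \Gamma(s+1)\Gamma(s+n+1/2)/[\Gamma(s+1/2)\Gamma(s+n+1)]$ yields $|b_n(s)| = O(n^{-1/2})$, so individual terms of $f(s)$ are $O(1/n)$, conditionally (not absolutely) summable.

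For convergence of $f(s)$ I would use Abel summation with the partial sums $S_n = \sum_{k=0}^n (-1)^k a_k$ of the Wallis series:
\[
\sum_{n=0}^N (-1)^n a_n b_n(s) = S_N b_N(s) + \sum_{n=0}^{N-1} S_n\bigl(b_n(s)-b_{n+1}(s)\bigr).
\]
Since $b_N(s)\to 0$ and $(S_N)$ is bounded, the boundary term vanishes, and the differences $b_n(s)-b_{n+1}(s) = b_n(s)/(2s+2n+2)$ are $O(n^{-3/2})$, hence absolutely summable. This establishes convergence of $f(s)$ for $s$ not a negative integer.

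For the limit I would write
\[
f(s+N) - \tfrac{1}{\sqrt{2}} = \sum_{n=1}^\infty (-1)^n a_n\bigl(b_n(s+N)-1\bigr)
\]
and split at an index $M$. For each fixed $M$, the head $\sum_{n=1}^M$ is a finite sum in which every term vanishes as $N\to\infty$, because $b_n(s+N)\to 1$ for fixed $n$. For the tail, applying Abel summation again with $U_n=\sum_{k=M+1}^n (-1)^k a_k$ (which satisfies $|U_n|\le 2 a_{M+1}$ by the alternating-series remainder bound) produces a bound of the form $C\, a_{M+1}\bigl(1 + \sum_{n>M}|b_n(s+N)|/|2s+2N+2n+2|\bigr)$. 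Using the uniform-in-$N$ estimate $|b_n(s+N)|\le \sqrt{2N/(N+n)}$, obtained by applying $1-x\le e^{-x}$ to each factor and then doing an integral comparison on $\sum_j 1/(N+j)$, the remaining sum is $O(1)$ independent of $N$. Hence the tail is bounded by a constant multiple of $a_{M+1}$ uniformly in $N$, which is arbitrarily small as $M$ grows, giving $\lim_{N\to\infty} f(s+N)=S_\infty=1/\sqrt{2}$.

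The main obstacle is this uniform bound $|b_n(s+N)|\le C\sqrt{N/(N+n)}$: the naive bound $|b_n(s+N)|\le 1$ is too weak because $\sum_n 1/(N+n)$ diverges, and without extra decay the tail estimate would blow up with $N$. Once the sharper decay is in hand, the Abel summation argument is mechanical and identifies the limit with the known Wallis alternating sum $1/\sqrt{2}$.
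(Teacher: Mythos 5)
Your proposal is correct, but it takes a genuinely different route from the paper. For convergence, the paper pairs the $2n$-th and $(2n+1)$-th terms, which produces a series whose grouped terms are $O(n^{-2})$, and then handles general $s$ by a limit comparison with the $s=0$ case (itself controlled by Stirling); you instead sum by parts against the partial sums of the Wallis alternating series, using $b_n(s)-b_{n+1}(s)=b_n(s)/(2s+2n+2)=O(n^{-3/2})$. For the limit $N\to\infty$, the pairing pays off for the paper: with the crude bound $\bigl|\tfrac{a+bi+2j-1}{a+bi+2j}\bigr|\le 1$ the grouped terms of $f(s+N)$ are dominated by $3a_{2n}/(4n+2)=O(n^{-3/2})$ uniformly in $N$, so a plain tail-splitting/dominated-convergence argument suffices. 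Because you do not pair terms, you are forced to prove the sharper uniform decay $|b_n(s+N)|\lesssim\sqrt{N/(N+n)}$, which you correctly identify as the crux; it is provable, but note that your derivation via $1-x\le e^{-x}$ is written for real $s$, and for complex $s$ you need the modulus identity $|1-1/w|^2=1-(2\,\mathrm{Re}(w)-1)/|w|^2$ before the exponential comparison, which costs only an $s$-dependent constant. Finally, you evaluate $\sum_n(-1)^n a_n=1/\sqrt{2}$ directly by Abel's theorem on $(1-x)^{-1/2}$, whereas the paper obtains this value only by cross-reference to its Theorem on the $q$-analogue of the sum; your treatment of that constant is the more self-contained of the two.
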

\begin{proof} 
First we prove that 
\[
\sum_{n=0}^\infty (-1)^n \prod_{j=1}^n  \frac{(2j-1)(2s+2j-1)}{(2j)(2s+2j)}
\]
is convergent for $s$ not a negative integer. We group the $2n$-th and $(2n+1)$-th terms together to express the sum as 
\begin{equation} \label{q elliptic 2 q=1}
\sum_{n=0}^\infty (\prod_{j=1}^{2n}  \frac{(2j-1)(2s+2j-1)}{(2j)(2s+2j)})(\frac{(3+8n+2s)}{(4n+2)(4n+2+2s)}).
\end{equation}
We compare \eqref{q elliptic 2 q=1} to the sum when $s=0$:
\begin{equation} \label{q elliptic 2 q=1 s=0}
\sum_{n=0}^\infty (\prod_{j=1}^{2n}  \frac{(2j-1)}{(2j)})^2(\frac{(3+8n)}{(4n+2)(4n+2)}) = \sum_{n=0}^\infty \frac{{4n \choose 2n}^2}{2^{8n}}\frac{(3+8n)}{(4n+2)(4n+2)}
\end{equation}
Using Stirling's approximation 
\[
n! \sim \sqrt{2 \pi n} (\frac{n}{e})^n 
\]
we have 
\[
 \frac{{4n \choose 2n}^2}{2^{8n}}\frac{(3+8n)}{(4n+2)(4n+2)} \sim \frac{1}{2\pi n^2}.
\]
Therefore \eqref{q elliptic 2 q=1 s=0} is convergent. If $s<0$ and $s \in \mathbb{Z}+\frac{1}{2}$, then the sum \eqref{q elliptic 2 q=1} is finite. For other $s$, we apply the limit comparison test to to sums \eqref{q elliptic 2 q=1} and \eqref{q elliptic 2 q=1 s=0} to get 
\[
\lim_{n \rightarrow \infty } | \frac{(4n+2)(2s+3+8n)}{ (3+8n)(2s+4n+2)} \prod_{j=1}^{2n}  \frac{(2j)(2s+2j-1) }{(2j-1)(2s+2j)}|.
\]
This infinite product is convergent to a non-zero number because the sum
\[
\sum_{j=1}^\infty (1- \frac{(2j)(2s+2j-1)}{ (2j-1)(2s+2j)}) = \sum_{j=1}^\infty \frac{s}{(2j-1)(s+j)}
\]
is convergent. Therefore \eqref{q elliptic 2 q=1} is convergent for any $s \in \mathbb{C}$ not a negative integer. 

We claim 
\begin{equation} \label{N lim second q elliptic}
\lim_{N\rightarrow \infty} \sum_{n=0}^\infty (\prod_{j=1}^{2n}  \frac{(2j-1)(2s+2N+2j-1)}{(2j)(2s+2N+2j)})(\frac{(3+8n+2s+2N)}{(4n+2)(4n+2+2s+2N)})  = \sum_{n=0}^\infty (\prod_{j=1}^{2n}  \frac{(2j-1)}{(2j)})\frac{1}{(4n+2)} 
\end{equation}
First we have that if $a,b \in \mathbb{R}, j \in \mathbb{Z}$ with $a$ and $j>0$, then 
\[
\left |\frac{a+bi +2j-1}{a+bi+2j}\right | = \sqrt{1+\frac{-2a-4j+1}{(a+2j)^2+b^2}}\leq 1. 
\]
Therefore in \eqref{N lim second q elliptic}, using $a+bi = 2s+2N$, we assume that $N$ is so large that $\mathrm{Re}(N+s)>0$.
Next, the sum on the right of \eqref{N lim second q elliptic} is convergent using Stirling's approximation again, so for any $\epsilon>0$ we can choose $n_1$ such that
\[
\sum_{n=m}^\infty (\prod_{j=1}^{2n}  \frac{(2j-1)}{(2j)})\frac{1}{(4n+2)} <\epsilon.
\]
for all $m > n_1$ and also such that 
\[
\left | \frac{(3+8n+2s+2N)}{(4n+2+2s+2N)} \right |< 3
\]
for all $n > n_1$ with $\mathrm{Re}(N+s)>0$. Thus we have 
\begin{align*}
\lim_{N \rightarrow \infty}|\sum_{n=0}^{\infty } &(\prod_{j=1}^{2n}  \frac{(2j-1)(2s+2N+2j-1)}{(2j)(2s+2N+2j)})(\frac{(3+8n+2s+2N)}{(4n+2)(4n+2+2s+2N)}) - (\prod_{j=1}^{2n}  \frac{(2j-1)}{(2j)})\frac{1}{(4n+2)} | \\ 
 &\leq   4\epsilon + \lim_{N \rightarrow \infty} |\sum_{n=0}^{n_1 } (\prod_{j=1}^{2n}  \frac{(2j-1)(2s+2N+2j-1)}{(2j)(2s+2N+2j)})(\frac{(3+8n+2s+2N)}{(4n+2)(4n+2+2s+2N)}) \\ 
&- (\prod_{j=1}^{2n}  \frac{(2j-1)}{(2j)})\frac{1}{(4n+2)} | \\ 
&=4\epsilon.
\end{align*}
 This proves the claim \eqref{N lim second q elliptic}. As mentioned in Theorem \ref{q integral} the sum $f(0) = \frac{1}{\sqrt{2}}$ by comparison with Theorem \ref{q sum}. This completes the proof. 
\end{proof}

\section{Further Work} \label{further work}

\begin{itemize}

\item See if there are $q$-analogues of other proofs of the arithmetic-geometric mean functional equation. 

\item See if $q$-analogues can be found for the arithmetic-geometric mean applied to complex numbers.

\item Find $q$-analogues for generalizations of the geometric-mean such as the cubic counterpart in \cite{Borwein}. 

\item Try to reconcile Identities 1 and 2 to construct a $q$-analogue of the functional equation itself, possibly using more than one function. 
 
\item Use $q$-analogues of $F(x)$ to determine $q$-analogues of $k(x)^2$ and thus $\theta_3(e^{\pi x})^2$. 
 
 For this point, $k(x)^2$ is the function
 \[
 k(x)^2= 1 -\frac{ \theta_3(e^{\pi x})^4}{ \theta_4(e^{\pi x})^4}.
 \]
 Now $k(x)^2$ is also determined by the properties 
 \begin{equation}\label{k 1/x}
 k(x)^2 + k(\frac{1}{x})^2=1
 \end{equation}
 and 
 \begin{equation} \label{k x}
 x F(k(x)^2) = F(1-k(x)^2).
 \end{equation}
 That is, those two properties imply
 \[
 \frac{\theta_4^4(e^{-\pi x})}{\theta_3^4(e^{-\pi x})} -\frac{1}{2}= 
 \sum_{n=0}^\infty (-1)^n  \frac{\overline{\kappa}(n)}{(2n+1)!}  (\frac{(\frac{\Gamma(\frac{1}{4})}{\Gamma(\frac{3}{4})})^4}{4})^{2n+1} (\frac{x-1}{x+1})^{2n+1}
 \]
 where $\overline{\kappa}(n)$ is the sequence $\{ 1,6,104, 3024, 130176, 7831296, ...\}$.
 Therefore a $q$-analogue of $F(x)$ can by used to define a $q$-analogue of $k(x)^2$ via \eqref{k 1/x} and \eqref{k x}. Then a $q$-analogue of $k(x)^2$ can be used to define a $q$-analogue of  $\theta_3(e^{\pi x})^2$ by 
 \begin{equation} \label{theta3 and k}
 \theta_3(e^{\pi x})^2 = F(1-k(x)^2).
 \end{equation}
 We note that $\theta_3(q)$ can itself be viewed as arising from a $q$-analogue of $\sin(\pi x)$, so above we are talking about a $q$-analogue of a function that is a specialization (at $q= e^{-\pi}$) of a $q$-analogue of another function ($\sin(x)$). 

We also note that the Mellin transform of $ \theta_3(e^{\pi x})^2-1$ is a $\Gamma$ function factor times 
\begin{equation}\label{zeta L-4}
\zeta(s) L_{-4}(s)
\end{equation}
where 
\[
L_{-4}(s) = \sum_{n=0}^\infty \frac{(-1)^{n}}{(2n+1)^s}.
\]
Therefore considering $\theta_3(e^{\pi x})^2$ directly may be easier than considering $\theta_3(e^{\pi x})$ and would contain information about $\zeta(s)$ and its zeros. Studying the coefficients of $k(x)^2$ or its $q$-analogues could yield information of the generalized Tur\'an inequalities for \eqref{zeta L-4} or an expression of the coefficients as elementary-symmetric polynomials. 

\item The Mellin transform \eqref{zeta L-4} follows from a Lambert series for  $ \theta_3(e^{\pi x})^2$. Find a combinatorial proof of this identity. 

\item Equation \eqref{theta3 and k} is actually combinatorial identity. Find an explicit combinatorial proof of this identity and see if it has a $q$-analogue.

\item See if $q$-analogues and infinite product evaluations exist for elliptic integrals of the second kind. 

\item The coefficients $a_n$  are 
\[
a_n= (\prod_{j=1}^n \frac{2j-1}{2j})^2 = \frac{{2n\choose n}^2}{2^{4n}}
 \]
 where we may interpret $\displaystyle {2n\choose n}^2$ as the number of lattice paths on a square grid that start at one corner and go to the opposite corner and then return. Find out how Identity 2 translates into operations on these lattice paths.

\end{itemize}

\end{document}